\newtheorem{thm}{Theorem}[section]
\newtheorem{lem}[thm]{Lemma}
\newtheorem{prop}[thm]{Proposition}
\theoremstyle{definition}
\newtheorem*{defn}{Definition}
\newtheorem{example}[thm]{Example}
\theoremstyle{remark}
\newtheorem*{note}{Note}
\newtheorem{rmk}[thm]{Remark}
\newcommand{\Addresses}{{
  \bigskip
  \footnotesize

  \textsc{Graduate School of Mathematics, Brown University,
    Providence, RI 02912}\par\nopagebreak
  \textit{E-mail address}: \texttt{thomas\_silverman@brown.edu}
}}
\newcommand\restrict[1]{\raisebox{-.5ex}{$|$}_{#1}}
\begin{document}
\title{A non-archimedean $\lambda$-lemma}
\author{Thomas Silverman}

\begin{abstract}
We provide a framework for studying the dynamics of families of one-variable rational functions parametrized by Berkovich spaces over a complete non-archimedean field.  We prove a non-archimedean analogue of Ma\~{n}\'{e}, Sad, and Sullivan's $\lambda$-Lemma and use this to show an equivalence of two stability conditions for families of rational functions parametrized by an open subset of the Berkovich affine line.
\end{abstract}

\maketitle

\section{Introduction}
\subsection{Statement of Main Result}
In a celebrated paper \cite{MSS83}, R. Ma\~{n}\'{e}, P. Sad, and D. Sullivan prove a result about holomorphic families of injections called the $\lambda$-Lemma with impressive applications to the complex dynamics of families of one-variable rational functions.  The \textit{Julia set} of a rational function $f \in \mathbb{C}(z)$ is a compact set where the dynamics of $f$ are chaotic; its complement is called the \textit{Fatou set}.  Ma\~{n}\'{e}-Sad-Sullivan introduce a notion of \textit{$J$-stability} for a holomorphic family of rational functions and, using the $\lambda$-Lemma, prove that two characterizations of $J$-stability are equivalent; one characterization involves homeomorphisms between the Julia sets that commute with the dynamics, and the other involves periodic points maintaining the same local behavior.  See \cite{MSS83}*{Theorem B} for the precise statement, and see \cite{McM94}*{Theorem 4.2} for these and more equivalent characterizations of $J$-stability.  The main goal of this article is to prove a non-archimedean analogue of the $\lambda$-Lemma and use this to prove the following equivalence of two stability conditions in non-archimedean dynamics.

\begin{restatable}{thm}{mainthm} \label{thm:main}
Let $K$ be a complete algebraically closed non-archimedean field, let $U \subseteq \mathbb{A}^{1,\text{an}}_K$ be a connected open set, and let $\left\lbrace f_\lambda \right\rbrace_{\lambda \in U}$ be an analytic family of rational maps over $K$.  Assume that the type I repelling periodic points are in dense in the Julia set $J_{x_0}$ for some $x_0 \in U$.  Then, the following are equivalent:

\begin{enumerate}[$(1)$]
\item The Julia sets $J_\lambda$ move analytically on $U$.
\item For all $x \in U$, $f_x$ has no unstably indifferent periodic points and no type I repelling periodic points with multiplicity greater than $1$.
\end{enumerate}
\end{restatable}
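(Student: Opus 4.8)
The plan is to establish the two implications separately: $(2)\Rightarrow(1)$ is the substantive direction and is where the non-archimedean $\lambda$-Lemma is used, while $(1)\Rightarrow(2)$ is a more direct bifurcation argument. Two standing facts are used repeatedly: the multiplier of an analytically varying periodic point is an analytic function of the parameter, and the classification of periodic points of a one-variable rational map (attracting, repelling, indifferent, with the stable/unstable dichotomy for indifferent points and the type-I and multiplicity bookkeeping) together with its relation to the Julia set. Connectedness of $U$ is used both to propagate persistence globally and because the $\lambda$-Lemma takes a connected parameter space as input.

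For $(2)\Rightarrow(1)$ I would first build the motion on a dense set. Let $R\subseteq J_{x_0}$ be the set of type-I repelling periodic points of $f_{x_0}$, so $\overline{R}=J_{x_0}$ by hypothesis. Fix $z\in R$ of period $n$; by $(2)$ at $x_0$ it is a simple zero of $f_{x_0}^{n}(w)-w$, so Hensel's lemma (or the analytic implicit function theorem) continues it to an analytic family $x\mapsto z(x)$ of period-$n$ points near $x_0$. Condition $(2)$ is tailored to propagate this over all of $U$: on the open locus where $z(x)$ persists as a simple type-I point it is automatically repelling, since its analytic multiplier cannot reach absolute value $\leq 1$ without producing an attracting or \emph{unstably indifferent} periodic point of $f_x$, which is forbidden by $(2)$; and this locus is also closed, because the only ways it could fail at a boundary parameter $x^{*}$ — two branches colliding (a type-I periodic point of multiplicity $>1$), the multiplier reaching the unit circle (an indifferent point that is a limit of repelling ones, hence unstably indifferent), or $z(x)$ escaping to a non-type-I limit — are each excluded by $(2)$ at $x^{*}$. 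Connectedness of $U$ gives a global analytic section, and varying $z$ over $R$ yields $\Phi_0\colon U\times R\to\mathbb{A}^{1,\mathrm{an}}_K$ with $\Phi_0(x_0,\cdot)=\mathrm{id}$, equivariant by uniqueness of analytic continuation and injective in the second variable (colliding branches would again force multiplicity $>1$). Applying the non-archimedean $\lambda$-Lemma extends $\Phi_0$ to an analytic motion $\Phi$ of $\overline{R}=J_{x_0}$, with equivariance and the homeomorphism property passing to the closure by continuity. Finally one checks $\Phi(x,\cdot)$ is onto $J_x$: its image is a closed, $f_x$-invariant set which, via the conjugacy, is the closure of the type-I repelling periodic points of $f_x$, and since $f_x$ inherits $(2)$ one argues that these are dense in $J_x$, whence the image is all of $J_x$ and $(1)$ holds.

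For $(1)\Rightarrow(2)$ I would argue contrapositively. If some $f_{x_1}$ has an unstably indifferent periodic point, or a type-I repelling periodic point of multiplicity $>1$, of period $n$, that point lies in $J_{x_1}$, hence equals $\Phi(x_1,z)$ for a period-$n$ point $z\in J_{x_0}$ of $f_{x_0}$; consider the analytic section $\zeta(x)=\Phi(x,z)$. In the multiplicity-$>1$ case, $z$ being a non-reduced solution of $f_{x_1}^{n}(w)=w$ forces the number of period-$n$ points of $f_x$ near $\zeta(x)$ to change as $x$ crosses $x_1$, while near $z$ the parameter-independent conjugate model $J_{x_0}$ has a fixed such count — contradicting that $\Phi(x,\cdot)$ is an equivariant homeomorphism. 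In the unstably-indifferent case, perturbing $x$ off $x_1$ makes $\zeta(x)$ attracting for some parameters, so $\zeta(x)\notin J_x$, contradicting that $\Phi$ maps into the Julia sets. Either way $(1)$ fails.

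The main obstacle is the surjectivity step in $(2)\Rightarrow(1)$: propagating the hypothesis ``type-I repelling points are dense in the Julia set'' from $x_0$ to every parameter, and thereby identifying the image of the extended motion with all of $J_x$ rather than with some a priori smaller closed invariant set. Tightly bound up with this are the non-archimedean subtleties of that step — the closure $\overline{R}$ is taken in a Berkovich space with points of several types, so one must verify the $\lambda$-Lemma genuinely outputs a homeomorphism of $J_{x_0}$ onto $J_x$, and that the persistence-and-collision analysis remains valid in positive (residue) characteristic, where type-I repelling periodic points of multiplicity $>1$ actually occur and the multiplier alone no longer detects repulsion.
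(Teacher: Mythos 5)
Your overall architecture matches the paper (build the motion on the dense set of type I repelling periodic points, apply the $\lambda$-Lemma, push surjectivity through a dynamical density argument; for the converse, perturb to produce a contradiction with the analytic motion). But there are two genuine gaps, one of which is fatal to the argument as written.

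\textbf{The multiplicity $>1$ case of $(1)\Rightarrow(2)$ is wrong.} You argue that a multiplicity-$>1$ periodic point forces ``the number of period-$n$ points of $f_x$ near $\zeta(x)$ to change as $x$ crosses $x_1$,'' as in the complex setting. This is precisely the phenomenon that \emph{does not} occur here. As Lemma \ref{lem:ppcol} of the paper shows, for a type I repelling periodic point the multiplicity $m$ equals $[\mathcal{H}(i_x(\xi)) : \mathcal{H}(x)]$ and the minimal polynomial stays \emph{irreducible} (separability plus the factor $k=1$ computation); the ``colliding branches'' are Galois conjugates over $\mathcal{H}(x)$, not branches that merge. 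In Example \ref{ex:quadratics} every type I periodic point has multiplicity $2$ along an entire segment $\{\,\mathrm{diam}(\lambda)=|\lambda|>1\,\}$ of parameter space, and the period-$n$ count is locally constant — there is no crossing. So your counting argument produces no contradiction. The paper's proof instead observes that an analytic section through $\eta$ must take values in $\mathbb{P}^1(\mathcal{H}(\lambda))$ (Proposition \ref{prop:ansec}), while Lemma \ref{lem:ppcol}(1) shows that a multiplicity $m\geq 2$ point satisfies $[\mathcal{H}(i_y(\eta)):\mathcal{H}(y)]=m>1$ and therefore is \emph{not} in $\mathbb{P}^1(\mathcal{H}(y))$. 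That is the correct mechanism, and it is intrinsically non-archimedean.

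\textbf{The surjectivity step of $(2)\Rightarrow(1)$ is detoured.} You propose to show that type I repelling periodic points of $f_x$ are dense in $J_x$ for every $x$, arguing this follows because ``$f_x$ inherits $(2)$.'' But condition $(2)$ does not by itself give density of type I repelling points in $J_x$ — that is a separate (and in positive characteristic, open) question; the known sufficient condition is B\'ezivin's theorem, which needs $\mathrm{char}(K)=0$ and at least one type I repelling point, and the paper assumes density only at the single base parameter $x_0$. The paper avoids this entirely: once $\phi_\lambda$ conjugates and maps some type I repelling periodic point of $f_{x_0}$ to one of $f_\lambda$, the image $\phi_\lambda(J_{x_0})$ is a compact set containing the backward orbit of that point, and backward orbits of Julia points are dense in the Julia set (\cite{BR10}*{Corollary 10.58}). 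Hence $\phi_\lambda(J_{x_0})=J_\lambda$ with no density claim at $\lambda$ needed. You should replace your detour with this.

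A smaller remark: for the construction of the motion on $R$, your open-and-closed persistence argument and the paper's covering-space argument over the period curve $\mathcal{C}_n$ are two routes to the same conclusion. The paper's version is cleaner because the local Berkovich isomorphism (Lemmas \ref{lem:monic} and \ref{lem:ift}) plus constancy of the type I repelling count immediately gives a trivial covering over the simply connected $U$; your closedness step requires you to rule out escape of the branch to a non-type-I limit, which is not obviously excluded by $(2)$ alone without the period-curve geometry.
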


We will postpone the precise definitions appearing in the statement until \S \ref{sec:deflem}, but a few preliminary remarks are in order.   When studying families in complex dynamics, an important idea is that the Julia set dynamics are the same for each rational function whose parameter belongs to a given connected component of the set of $J$-stable parameters.  We would like to employ this idea in the non-archimedean case as well, but $K$ equipped with its metric topology is totally disconnected.  The solution to this difficulty in the context of a single rational function over a non-archimedean field is to use the Berkovich projective line $\mathbb{P}^{1,\text{an}}_K$ as the dynamical space, which is a path-connected compact Hausdorff topological space containing $\mathbb{P}^{1}(K)$ as a topological subspace.  This strategy has achieved tremendous success in recent years beginning with J. Rivera-Letelier's thesis \cite{RL03} but has not yet been used for parameter spaces of families of rational functions.  In this article, we will use Berkovich analytic spaces for both dynamical and parameter spaces; the parameter space $U$ and the Julia sets $J_\lambda$ appearing in the statement of Theorem \ref{thm:main} are subsets of the Berkovich projective line.  One wrinkle that arises as a consequence of this approach is that the parameter values that are not classical points do not correspond to rational functions defined over $K$, so there is no immediately apparent way of associating dynamical systems to them.  However, as we will see in \S \ref{sec:deflem}, they can be naturally associated to rational functions defined over an extension of $K$.  Still, the fact that the dynamical space depends on the parameter value is a feature that does not appear in the complex case and will require us to be more careful.

We also mention that while the ``no unstably indifferent periodic points'' condition is a direct analogue of the periodic point characterization of $J$-stability in \cite{MSS83}, the ``no higher multiplicity repelling periodic points'' condition refers to a bifurcation that does not appear in complex dynamics.  In fact, this bifurcation can only occur at non-classical Berkovich space parameter values, which helps demonstrate the utility of this approach.  We will provide a more detailed discussion of these bifurcations in Example \ref{ex:quadratics}.

The topic of $J$-stability in non-archimedean dynamics has already been studied by J. Lee \cite{Lee17}, but in that article the author proves an analogue of a different result from \cite{MSS83} which states that hyperbolic parameters are $J$-stable.  Lee also does not use Berkovich spaces at all in his work, so our approaches differ considerably.

The non-archimedean analogue of the $\lambda$-Lemma we will need is the following.

\begin{restatable}{thm}{lamlemma}[The Non-archimedean $\lambda$-Lemma] \label{thm:lamlemma}
Let $U \subseteq \mathbb{A}^{1,\text{an}}$ be a connected open subset and let $x_0 \in U$ be given.  Let $E \subseteq \mathbb{P}^{1,\text{an}}_{x_0}$ be a set of type I points.  Let
\[
\phi_\lambda: E \longrightarrow \mathbb{P}^{1,\text{an}}_{\lambda}
\]
be a family of injections (not necessarily continuous) such that $\phi_{x_0} = \text{id}$ and $\lambda \mapsto \phi_\lambda(\xi)$ is an analytic map from $U$ to $\mathbb{P}^{1,\text{an}} \times U$ for each $\xi \in E$.  Then, each $\phi_\lambda$ has a unique continuous injective extension to the topological closure $\overline{E}$ of $E$ in $\mathbb{P}^{1, \text{an}}_{x_0}$.  Moreover, $\lambda \mapsto \phi_\lambda(\xi)$ is an analytic map from $U$ to $\mathbb{P}^{1,\text{an}} \times U$ for each type I point $\xi \in \overline{E}$.
\end{restatable}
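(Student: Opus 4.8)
The plan is to use injectivity to turn the motion of $E$ into a family of analytic maps into $\mathbb{P}^{1,\text{an}}\setminus\{0,1,\infty\}$, to exploit the rigidity of such maps to show that every $\phi_\lambda$ automatically respects the poset of ultrametric balls of $E$ — and is therefore continuous — and then to extend along that poset; the non-archimedean analysis here plays the role that Montel's theorem and quasisymmetry play in \cite{MSS83}. First the reductions. If $E$ has at most three points then $\overline E=E$, since finite sets of type I points are closed, and there is nothing to prove; so assume $E$ is infinite, fix three of its points, and conjugate the source $\mathbb{P}^{1,\text{an}}_{x_0}$ by a fixed Möbius transformation and each target $\mathbb{P}^{1,\text{an}}_\lambda$ by an analytically varying one so that $0,1,\infty\in E$, each $\phi_\lambda$ fixes $0$, $1$ and $\infty$, and still $\phi_{x_0}=\mathrm{id}$; this is legitimate because $\lambda\mapsto\phi_\lambda(\xi_i)$ is analytic and the three images stay distinct. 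Since analytic functions carry rational points to rational points, each $\phi_\lambda$ also takes values in type I points. Now, for distinct $\xi,\eta,\zeta\in E\setminus\{\infty\}$, injectivity and the normalization make
\[
w_{\xi\eta\zeta}(\lambda)=\frac{\phi_\lambda(\xi)-\phi_\lambda(\eta)}{\phi_\lambda(\xi)-\phi_\lambda(\zeta)}
\]
an analytic map $U\to\mathbb{P}^{1,\text{an}}\setminus\{0,1,\infty\}$, and likewise $\phi_\lambda(\xi)-\phi_\lambda(\eta)$ is a non-vanishing (indeed $\infty$-avoiding) analytic function on $U$.

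The heart of the argument is a rigidity statement: an analytic map $w\colon U\to\mathbb{P}^{1,\text{an}}\setminus\{0,1,\infty\}$ from a connected open $U$ cannot cross the skeleton of $\mathbb{P}^{1,\text{an}}\setminus\{0,1,\infty\}$ — each of the conditions $|w|<1$, $|w|=1$, $|w-1|<1$, $|w^{-1}|<1$ holds either everywhere or nowhere on $U$. On an open disk this is just the maximum modulus principle: a non-vanishing analytic function there has constant absolute value, and one applies this to $w$, $w-1$ and $w^{-1}$. For a general connected $U$ I would argue by local constancy, using that every point of $\mathbb{A}^{1,\text{an}}$ has a neighborhood that is an open disk, an open annulus, or a ``star'' around a type II point: on a star the absolute value of $w$ is constant along each branch by the disk case and continuity at the central point forces these constants to agree; on an annulus a non-vanishing analytic function that also omits $1$ has winding number zero, since a nonzero winding number forces surjectivity onto $\mathbb{G}_m$ near the two boundary circles, and in general continuity of $|w|$ shows that a boundary crossing would put $w$ at a Gauss point with non-constant reduction, forcing it to hit one of $0,1,\infty$. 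Applying this with $w=w_{\xi\eta\zeta}$, whose value at $x_0$ is $(\xi-\eta)/(\xi-\zeta)$, shows $\phi_\lambda$ preserves the comparison of distances,
\[
|\phi_\lambda(\xi)-\phi_\lambda(\eta)|\le|\phi_\lambda(\xi)-\phi_\lambda(\zeta)| \iff |\xi-\eta|\le|\xi-\zeta|,
\]
hence carries the poset of ultrametric balls of $E$ isomorphically onto that of $\phi_\lambda(E)$; in particular $\phi_\lambda$ is a homeomorphism onto its image for the (metric) subspace topology, so it was continuous all along, and on any disk contained in $U$ it is even an isometry.

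The extension is then essentially formal. The topological closure of a set of type I points in $\mathbb{P}^{1,\text{an}}$, together with its topology, is determined by the set's poset of ultrametric balls — the non-type-I limit points correspond to decreasing sequences of balls, and a neighborhood basis is read off from them — so the ball-poset isomorphism $\phi_\lambda\colon E\to\phi_\lambda(E)$ extends uniquely to a homeomorphism $\Phi_\lambda\colon\overline E\to\overline{\phi_\lambda(E)}\subseteq\mathbb{P}^{1,\text{an}}_\lambda$; this is the required continuous injective extension, and it is the unique continuous extension because $E$ is dense in $\overline E$. For the final assertion, let $\xi\in\overline E$ be a type I point and write $\xi=\lim_n\xi_n$ with $\xi_n\in E$ (limit in the metric, equivalently Berkovich, topology on type I points). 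Then $\lambda\mapsto\Phi_\lambda(\xi)=\lim_n\phi_\lambda(\xi_n)$ is a pointwise limit of analytic maps, and the convergence is uniform on a neighborhood of every point of $U$: near any point one re-bases the family on a disk through it and invokes the isometry property to see that $|\phi_\mu(\xi_n)-\phi_\mu(\xi_m)|$ is independent of $\mu$ there, with analogous arguments for annuli and stars. A locally uniform limit of analytic maps is analytic, so $\lambda\mapsto\Phi_\lambda(\xi)$ is analytic, as required.

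The step I expect to be the main obstacle is the ``no crossing the skeleton'' statement over a general connected $U$: the one-line maximum-modulus argument applies only on disks, and away from that case one must analyze analytic maps on annuli (winding numbers and surjectivity onto $\mathbb{G}_m$) and near type II points (via reduction), and then patch the local conclusions together using the connectedness of $U$. The same local analysis — on disks, annuli, and stars — is also what is needed to make the convergence in the final step locally uniform, and hence to obtain analyticity, rather than mere continuity, of the extended motions.
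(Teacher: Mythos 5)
Your plan is attractive but the rigidity statement at its heart is false for a general connected open $U$, and with it falls the claim that $\phi_\lambda$ preserves comparisons of distances.  Take $U = D(0,1/r) \setminus \left( \overline{D}(0,r) \cup \overline{D}(1,r) \right)$ with $r<1$, and $w(\lambda)=\lambda$.  Then $w$ is analytic on $U$, avoids $0,1,\infty$ (since $0,1,\infty\notin U$), but $|w|<1$ on the direction towards $0$ and $|w|>1$ on the direction towards $\infty$, so the conditions $|w|<1$ and $|w^{-1}|<1$ each hold on part of $U$ and not on another part.  The point your heuristic misses is that the reduction $\widetilde w$ at a Gauss point can be non-constant without $w$ hitting $0,1,\infty$, because the zeros and poles of $\widetilde w$ may sit in the directions of the holes of $U$ — exactly the situation that occurs for cross-ratios of moving points.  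A concrete consequence: take $E=\{0,1,2,\infty\}$ in residue characteristic $\neq 2$, $\phi_\lambda$ fixing $0,1,\infty$ and sending $2\mapsto\lambda+2$, on $U=D(0,1/r)\setminus(\overline D(-1,r)\cup\overline D(-2,r))$ with $x_0=0$.  Then $|\phi_{x_0}(0)-\phi_{x_0}(1)|=|\phi_{x_0}(0)-\phi_{x_0}(2)|=1$ but $|\phi_\lambda(0)-\phi_\lambda(1)|=1 < |\lambda|=|\phi_\lambda(0)-\phi_\lambda(2)|$ at $\lambda=\zeta_{0,s}$, $s>1$.  So the ball-poset of $E$ is not, in general, preserved by the motion; your extension argument and the locally uniform convergence argument both rest on this false premise.

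The paper's proof has to work harder precisely because no such clean rigidity holds.  After reducing to a basic open set and normalizing so that $0,\infty\in E$ are fixed, it does not try to show that ratios $|\psi_\alpha-\psi_\beta|/|\psi_\alpha-\psi_\gamma|$ are constant; instead it first invokes an $S$-unit theorem over the function field $\widetilde K(T)$ (Lemma \ref{lem:sunitdiff}) to get a uniform bound on the degrees of all the reductions $\widetilde{\psi_\alpha/\sigma_\alpha}$, hence a uniform bound $R$ on $\bigcup_\alpha\operatorname{im}(\psi_\alpha)$, and then uses Weierstrass-degree bookkeeping (Proposition \ref{prop:basicopen}, Proposition \ref{prop:scaling}) to prove uniform equicontinuity on each sphere $E_s$, from which the type I extension and local uniform convergence follow.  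The extension to non-type-I limits is a further step that the metric argument alone does not give, since there is no metric there: the paper runs a net argument with Lemma \ref{lem:injective}, a quantitative partial rigidity valid only when the ratio $t_2/t_1$ is not too far from $1$ (controlled by the distance from $x_0$ to the holes of $U$).  If you want to repair your argument, you would need to replace the global cross-ratio rigidity with something like the paper's Lemma \ref{lem:injective}, which is a rigidity statement conditioned on a quantitative hypothesis, and supply a degree bound (for instance via $S$-units) to control the behaviour away from that regime.  The disc-case version of your argument is essentially the paper's own proof sketch in that case, which is where the maximum-modulus phenomenon does what you want.
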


It is important to note that this result requires a set of type I points $E$, which is why need the assumption that the Julia set contains type I repelling periodic points in Theorem \ref{thm:main}.  See \S \ref{sec:deflem} for further discussion.

\subsection{Contents of this Paper}
In \S \ref{sec:berkgeo}, we provide background information on Berkovich's theory of analytic geometry, including analogues of the open mapping theorem and the inverse function theorem from complex analysis.  We also provide specifics pertaining to the Berkovich projective line in \S \ref{ssec:bpl}.

In \S \ref{sec:dynbpl}, we recall basic definitions and facts about dynamics of rational functions on the Berkovich projective line.

In \S \ref{sec:deflem}, we provide the definitions appearing in the statement of Theorem \ref{thm:main} and prove some related lemmas.  We also discuss these definitions in the context of a specific example.

In \S \ref{ssec:lamlemma}, we state and prove the non-archimedean analogue of the $\lambda$-Lemma and use it to prove Theorem \ref{thm:main} in \S \ref{ssec:pfmain}.

\section{Berkovich Analytic Geometry Background} \label{sec:berkgeo}
\subsection{General Theory} \label{ssec:gentheory}
The material in this subsection can be found in \cite{Ber90} and \cite{Ber93}.  See also \cite{DFN15}*{Part I} for an overview of Berkovich analytic spaces.  Throughout, $K$ will denote an algebraically closed field complete with respect to a nontrivial non-archimedean absolute value $\left\lvert \cdot \right\rvert$.  Although our ground field will always be algebraically closed, we will also need to make use of some of theory for transcendental extensions of $K$ that are not algebraically closed.  We will use $\mathbb{K}$ to denote a field complete with respect to a nontrivial non-archimedean absolute value that is not necessarily algebraically closed.  We will write $\widetilde{\mathbb{K}}$ for the residue field
\[
\widetilde{\mathbb{K}} := \left\lbrace x \in \mathbb{K} \mid \left\lvert x \right\rvert \leq 1 \right\rbrace / \left\lbrace x \in \mathbb{K} \mid \left\lvert x \right\rvert < 1 \right\rbrace.
\]
of a non-archimedean field $\mathbb{K}$.

In analogy with affine spaces in the theory of schemes, the basic building blocks of Berkovich's analytic theory are the \textit{affinoid spaces} to be defined shortly; we will need a few preliminary definitions.  A \textit{Banach ring} is a complete normed ring (we will assume that all norms are non-archimedean).  A seminorm $\left\lvert \cdot \right\rvert_x$ on a Banach ring $(\mathscr{A}, \left\lVert \cdot \right\rVert)$ will be called \textit{bounded} if there is a constant $C \geq 0$ such that $\left\lvert f \right\rvert_x \leq C \left\lVert f \right\rVert$ for all $f \in \mathscr{A}$.  Given a Banach ring $\mathscr{A}$ and an $n$-tuple of positive real numbers $\textbf{r} = (r_1, \ldots ,r_n)$, we will equip the polynomial ring $\mathscr{A}[\textbf{T}] := \mathscr{A}[ T_1, \ldots, T_n ]$ with the norm
\[
\left\lVert \sum_{\nu \in \mathbb{Z}_{\geq 0}^n} a_\nu \textbf{T}^\nu \right\rVert_{\textbf{r}} := \max_\nu \left\lVert a_\nu \right\rVert \textbf{r}^\nu,
\]
where $\textbf{T}^\nu = T_1^{\nu_1} T_2^{\nu_2} \cdots T_n^{\nu_n}.$  The completion of $\mathscr{A}[ \textbf{T} ]$ with respect to this norm is the Banach $\mathscr{A}$-algebra denoted by $\mathscr{A} \left\lbrace \textbf{r}^{-1} \textbf{T} \right\rbrace$.  This ring can be viewed as the ring of convergent power series over $\mathscr{A}$ on the polydisc with polyradius $\textbf{r}$ centered at the origin.  A \textit{$\mathbb{K}$-affinoid algebra} is a Banach $\mathbb{K}$-algebra isomorphic to a quotient of $\mathbb{K} \left\lbrace \textbf{r}^{-1} \textbf{T} \right\rbrace$ equipped with the quotient norm induced by $\left\lVert \cdot \right\rVert_\textbf{r}$ for some $\textbf{r} \in \mathbb{R}_+ ^n$.  Given a $\mathbb{K}$-affinoid algebra $\mathscr{A}$, its associated \textit{affinoid spectrum} is the topological space
\[
\mathcal{M}(\mathscr{A}) := \left\lbrace \text{bounded multiplicative seminorms } \left\lvert \cdot \right\rvert_x  \text{on } \mathscr{A}  \right\rbrace
\]
equipped with the weakest topology such that $\left\lvert \cdot \right\rvert_x \mapsto \left\lvert f \right\rvert_x$ is continuous for all $f \in \mathscr{A}$.  Affinoid spectra are always compact Hausdorff topological spaces \cite{Ber90}*{Theorem 1.2.1}.  To emphasize that the seminorms are points in the space $\mathcal{M}(\mathscr{A})$, we will frequently use the notation $x \in \mathcal{M}(\mathscr{A})$ and $f \mapsto \left\lvert f(x) \right\rvert$ for the associated seminorm.  Any bounded homomorphism of Banach $\mathbb{K}$-algebras
\[
\phi: \mathscr{A} \longrightarrow \mathscr{B}
\]
induces a \textit{morphism of affinoid spectra}
\[
\phi^{\#}: \mathcal{M}(\mathscr{B}) \longrightarrow \mathcal{M}(\mathscr{A})
\]
defined by $\left\lvert f(\phi^{\#}(x)) \right\rvert = \left\lvert (\phi \circ f)(x) \right\rvert$ for $f \in \mathscr{A}$ and $x \in \mathcal{M}(\mathscr{B})$.
A closed subset $V \subseteq \mathcal{M}(\mathscr{A})$ is called a \textit{$\mathbb{K}$-affinoid domain} if there exists a $\mathbb{K}$-affinoid algebra $\mathscr{A}_V$ and a morphism of $\mathbb{K}$-affinoid spectra
\[
\phi: \mathcal{M}(\mathscr{A}_V) \longrightarrow \mathcal{M}(\mathscr{A})
\]
whose image is $V$ such that any morphism of $\mathbb{K}$-affinoid spectra $\mathcal{M}(\mathscr{B}) \longrightarrow \mathcal{M}(\mathscr{A})$ with image in $V$ factors uniquely through $\phi$.  It is clear that in this case the $\mathbb{K}$-affinoid algebra $\mathscr{A}_V$ is determined up to unique isomorphism, so we can identify $V$ with $\mathcal{M}(\mathscr{A}_V)$.  Now, we can provide a $\mathbb{K}$-affinoid spectrum $X = \mathcal{M}(\mathscr{A})$ with a sheaf of rings $\mathcal{O}_X$ defined as follows:
\[
\mathcal{O}_X(U) := \lim_\leftarrow \mathscr{A}_V,
\]
where the limit is taken over affinoid domains $V$ contained in the open set $U$.  The locally ringed space $(X, \mathcal{O}_X)$ is called a $\mathbb{K}$-affinoid space.

For our purposes, we will only need the Berkovich analytic spaces considered in \cite{Ber90}, which are called ``good'' analytic spaces in \cite{Ber93}.  The more general construction given in the latter paper is a more satisfying definition and includes many natural spaces that are not ``good'' including fibered products of good spaces, but it is also more technical.  We will opt for a brief presentation of the less technical and more familiar looking definition here.  A \textit{$\mathbb{K}$-quasiaffinoid} space is a pair $(\mathcal{U},\phi)$ with $\mathcal{U}$ a locally ringed space and $\phi: \mathcal{U} \longrightarrow X$ an open immersion into a $\mathbb{K}$-affinoid space $X$.  A compact subset $U \subseteq \mathcal{U}$ is called an \textit{affinoid domain} if $\phi(U)$ is an affinoid domain in $X$.  A \textit{morphism of $\mathbb{K}$-quasiaffinoid spaces}
\[
(\mathcal{U},\phi) \longrightarrow (\mathcal{V},\psi)
\]
is a morphism of locally ring spaces $\theta: \mathcal{U} \longrightarrow \mathcal{V}$ such that for any pair of afffinoid domains $U \subseteq \mathcal{U}$ and $V \subseteq \mathcal{V}$ with $\theta(U)$ contained in the topological interior of $V$ in $\mathcal{V}$, the induced map on affinoid algebras is a bounded homomorphism.  A \textit{$\mathbb{K}$-analytic atlas} on a locally ringed space $X$ is a collection $\left\lbrace (\mathcal{U}_i, \phi_i) \right\rbrace_{i \in I}$ of $\mathbb{K}$-quasiaffinoid spaces (called \textit{charts}) such that
\begin{itemize}
\item The collection $\left\lbrace \mathcal{U}_i \right\rbrace_{i \in I}$ is an open cover of $X$.
\item The induced map
\[
\phi_j \circ \phi_i^{-1}: \phi_i(\mathcal{U}_i \cap \mathcal{U}_j) \longrightarrow \phi_j(\mathcal{U}_i \cap \mathcal{U}_j)
\]
is an isomorphism of $\mathbb{K}$-quasiaffinoid spaces for each pair $i,j \in I$.
\end{itemize}
A $\mathbb{K}$-quasiaffinoid space $(\mathcal{U}, \phi)$ with $\mathcal{U} \subseteq X$ open is said to be \textit{compatible} with the atlas $\left\lbrace (\mathcal{U}_i, \phi_i) \right\rbrace_{i \in I}$ if each $\phi \circ \phi_i^{-1}$ is an isomorphism of $\mathbb{K}$-quasiaffinoid spaces as above.  Two atlases are \textit{compatible} if each open set in one atlas is compatible with the other.  A (good) \textit{Berkovich $\mathbb{K}$-analytic space} is a locally ringed space $X$ with an equivalence class of $\mathbb{K}$-analytic atlases.  A \textit{morphism} $\theta: X \longrightarrow Y$ between such spaces is a morphism of locally ringed spaces such that there are atlases $\left\lbrace (\mathcal{U}_i,\phi_i) \right\rbrace$ and $\left\lbrace (\mathcal{V}_j, \psi_j) \right\rbrace$ on $X$ and $Y$ respectively with
\[
\psi_j \circ \theta \circ \phi_i^{-1}: \phi_i(\mathcal{U}_i) \longrightarrow \psi_j(\mathcal{V}_j)
\]
a morphism of $\mathbb{K}$-quasiaffinoid spaces for all $i,j$.  To distinguish from maps between Berkovich spaces that are not morphisms that will arise below, we will also refer to morphisms as \textit{analytic maps}.

Affinoid domains in an analytic space can be defined by an analogous universal property as in the case of affinoid spaces: a compact subset $V$ of a $\mathbb{K}$-analytic space $X$ is called a \textit{$\mathbb{K}$-affinoid domain} if there is an analytic map $\mathcal{M}(\mathscr{A}) \longrightarrow X$ from an affinoid space that maps homeomorphically onto $V$ that any other analytic map with image in $V$ factors through uniquely.  Affinoid neighborhoods of a point $x \in X$ form a basis of neighborhoods \footnote{In fact, this property characterizes ``good'' spaces in the context of Berkovich's more general definition.}.  Given a point $x \in X$, we will write $\kappa(x)$ for the residue field $\mathcal{O}_{X,x} / \mathfrak{m}_{X,x}$.  Note that $\mathfrak{m}_{X,x}$ may be trivial.  The requirement that the norms on the affinoid algebras corresponding to affinoid domains in affinoid spaces are quotient norms implies that the ring $\mathcal{O}_{X,x}$ inherits a norm, which then gives an absolute value $\left\lvert \cdot \right\rvert_x$ on $\kappa(x)$.  We define $\mathcal{H}(x)$ to be the completion of $\kappa(x)$ with respect to this absolute value.  Given any affinoid neighborhood $\mathcal{M}(\mathscr{A})$ of $x$ in $X$, we also have that $\mathcal{H}(x)$ is the completion of the field $\mathscr{A}_{\mathfrak{p}_x} / \mathfrak{p}_x \mathscr{A}_{\mathfrak{p}_x}$
with respect to the absolute value $f \mapsto \left\lvert f(x) \right\rvert$, where $\mathfrak{p}_x = \left\lbrace f \in \mathscr{A} \mid \left\lvert f(x) \right\rvert = 0 \right\rbrace$.  We caution that $\mathcal{O}_X$ and $\kappa(x)$ are only useful objects to consider for good Berkovich spaces, but there is a compatible definition of $\mathcal{H}(x)$ that is useful for the more general definition of Berkovich spaces from \cite{Ber93}.

The field $\kappa(x)$ also has the important property that it is \textit{quasicomplete} \cite{Ber93}*{Theorem 2.3.3}, meaning that its absolute value extends uniquely to any algebraic field extension.  We will need the following fact about quasicomplete fields (but only for the proof of Lemma \ref{lem:ppcol}).

\begin{prop} \label{prop:weaklystab}
Let $\kappa$ be a quasicomplete field.  Then,
\[
[ \widehat{L} : \widehat{\kappa} ] = [ L : \kappa ]
\]
for every finite separable field extension $L$ of $\kappa$.
\end{prop}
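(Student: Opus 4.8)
The plan is to analyze the finite-dimensional $\widehat{\kappa}$-algebra $L \otimes_{\kappa} \widehat{\kappa}$. First I would record the easy inequality valid for any finite extension: the $\widehat{\kappa}$-linear span of $L$ inside $\widehat{L}$ is a finite-dimensional, hence complete, hence closed, $\widehat{\kappa}$-subspace containing the dense set $L$, so it equals $\widehat{L}$; thus the multiplication map $L \otimes_{\kappa} \widehat{\kappa} \to \widehat{L}$ is a surjection of $\widehat{\kappa}$-algebras, and $[\widehat{L} : \widehat{\kappa}] \le [L : \kappa]$ with equality precisely when this map is an isomorphism, i.e. when $L \otimes_{\kappa} \widehat{\kappa}$ is a field. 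So the entire point is to show that quasicompleteness of $\kappa$ together with separability of $L/\kappa$ forces $L \otimes_{\kappa} \widehat{\kappa}$ to be a field.

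Next I would use the primitive element theorem to write $L = \kappa[x]/(f)$ with $f$ monic and separable of degree $n = [L:\kappa]$. Since the condition $\gcd(f, f') = 1$ persists under the extension $\kappa \hookrightarrow \widehat{\kappa}$, the polynomial $f$ factors over $\widehat{\kappa}$ into pairwise distinct monic irreducibles $f = f_1 \cdots f_k$, whence $L \otimes_{\kappa}\widehat{\kappa} \cong \prod_{i=1}^{k} M_i$ with each $M_i := \widehat{\kappa}[x]/(f_i)$ a finite separable extension of the complete field $\widehat{\kappa}$. Each $M_i$ therefore carries a unique absolute value extending that of $\widehat{\kappa}$ and is complete for it; restricting this absolute value along the composite $\sigma_i : L \hookrightarrow L \otimes_{\kappa}\widehat{\kappa} \to M_i$ (injective, as $L$ is a field and $\sigma_i \ne 0$) produces an absolute value $|\cdot|_i$ on $L$ that literally extends the given absolute value on $\kappa$. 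A $\kappa$-basis of $L$ is a $\widehat{\kappa}$-basis of $L\otimes_\kappa\widehat\kappa \cong \prod_i M_i$, and $\kappa$ is dense in $\widehat{\kappa}$, so $L$ is dense in $\prod_i M_i$ for the product topology; in particular each $M_i$ is the completion of $(L, |\cdot|_i)$, and $k$ equals the number of the absolute values $|\cdot|_i$.

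It then remains to check that the $|\cdot|_i$ are pairwise distinct as absolute values on $L$. If $|\cdot|_i = |\cdot|_j$ with $i \ne j$, then by density of $L$ in $M_i \times M_j$ I may choose $\ell \in L$ with $\sigma_i(\ell)$ within distance $1$ of $1$ and $\sigma_j(\ell)$ within distance $1$ of $0$ in the respective fields; the ultrametric inequality then gives $|\ell|_i = 1 \ne |\ell|_j$, a contradiction. Finally, since $\kappa$ is quasicomplete, its absolute value extends uniquely to the algebraic extension $L$, so the $k$ distinct extensions $|\cdot|_1, \dots, |\cdot|_k$ force $k = 1$. Hence $f$ is irreducible over $\widehat{\kappa}$, the algebra $L \otimes_{\kappa}\widehat{\kappa} = M_1$ is a field equal to $\widehat{L}$, and $[\widehat{L}:\widehat{\kappa}] = n = [L:\kappa]$.

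I expect the main obstacle to be the weak-approximation step: verifying that the factors $M_i$ genuinely correspond to distinct valuations of $L$ (equivalently, that $L$ is dense in $\prod_i M_i$) is exactly what couples the algebra of $L \otimes_\kappa \widehat{\kappa}$ to the hypothesis on uniqueness of extensions of the absolute value. One should also be careful to apply quasicompleteness to the algebraic extension $L$ of $\kappa$ (not to $\widehat{\kappa}$), and to note that separability is genuinely used — it is precisely what guarantees that $L \otimes_{\kappa}\widehat{\kappa}$ is reduced, and the conclusion can fail for purely inseparable extensions.
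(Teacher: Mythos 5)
Your proof is correct, and it is genuinely different from the paper's: the paper simply cites three results from Bosch--G\"untzer--Remmert (Proposition 3.5.1/3, Corollary 3.2.3/2, Proposition 2.3.3/6), which belong to the BGR circle of ideas around weakly stable fields and $\kappa$-Cartesian bases, whereas you give a self-contained textbook argument through the algebra $L\otimes_\kappa\widehat\kappa$. Your route --- decompose $L\otimes_\kappa\widehat\kappa$ into a product $\prod_i M_i$ of complete fields using separability, pull back the $k$ resulting absolute values to $L$, show they are pairwise distinct by weak approximation, and then invoke quasicompleteness of $\kappa$ to force $k=1$ --- is the classical proof that for a Henselian (equivalently, quasicomplete) base field the ``fundamental identity'' holds without defect in the separable case. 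Its main advantage is that it makes visible exactly where separability enters (reducedness of $L\otimes_\kappa\widehat\kappa$) and where quasicompleteness enters (uniqueness of the extension, killing all but one factor); the BGR citation buys brevity at the cost of opacity.

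Two small remarks on your write-up. First, your opening paragraph establishing the surjection $L\otimes_\kappa\widehat\kappa\twoheadrightarrow\widehat L$ and the inequality $[\widehat L:\widehat\kappa]\le[L:\kappa]$ is redundant: once you have shown $k=1$, so that $L\otimes_\kappa\widehat\kappa=M_1$ is a field containing $L$ densely, $M_1$ is automatically identified with $\widehat L$ and the dimension count is immediate; the preliminary inequality is never used. Second, the density of $L$ in $\prod_i M_i$ silently uses that any two norms on a finite-dimensional vector space over the complete field $\widehat\kappa$ are equivalent, so that the product topology agrees with the coordinate topology coming from a $\kappa$-basis of $L$; you may wish to flag that standard fact explicitly, since it is the one place where completeness of $\widehat\kappa$ (as opposed to mere quasicompleteness of $\kappa$) is doing work.
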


Here $\widehat{L}$ denotes the completion of $L$ with respect to the unique extension absolute value on $L$ guaranteed by the quasicompleteness of $\kappa$.

\begin{proof}
This follows by combining Proposition 3.5.1/3, Corollary 3.2.3/2, and Proposition 2.3.3/6 in \cite{BGR84}.
\end{proof}

A morphism of Berkovich spaces $\phi: Y \longrightarrow X$ is said to be $\textit{finite}$ if for every $x \in X$ there is an affinoid neighborhood $\mathcal{M}(\mathscr{A})$ of $x$ in $X$ such that $\phi^{-1}\left(\mathcal{M}(\mathscr{A}) \right) = \mathcal{M}(\mathscr{B})$ is an affinoid domain in $Y$ and $\mathscr{B}$ is a finite $\mathscr{A}$-algebra.  The \textit{multiplicity} of a finite morphism of Berkovich spaces $\phi:Y \longrightarrow X$ at a point $y \in Y$ is the positive integer
\[
m_\phi(y) := \text{dim}_{\kappa(x)} \left( \mathcal{O}_{Y,y} / \mathfrak{m}_{X,x} \mathcal{O}_{Y,y} \right),
\]
where $x := \phi(y)$.  This definition of multiplicity was first used in a dynamical context in A. Thuiller's thesis \cite{Thu05} and mirrors the corresponding notion in algebraic geometry.

Now, we will discuss the specific Berkovich spaces that will arise in our work.  The \textit{closed $n$-dimensional Berkovich polydisc over $\mathbb{K}$} of polyradius $\textbf{r} = (r_1, \ldots, r_n) \in \mathbb{R}_+^n$ and center $\textbf{a} = (a_1, \ldots, a_n) \in \mathbb{K}^n$ is the affinoid space
\[
\overline{D}(\textbf{a}, \textbf{r}) := \mathcal{M} \left( \mathbb{K} \left\lbrace r_1^{-1} (T_1-a_1), \ldots , r_n^{-1} (T_n-a_n) \right\rbrace \right).
\]
Note that if $r_i \leq s_i$ for all $i$, the space $\overline{D}(\textbf{a}, \textbf{r})$ is an affinoid domain in $\overline{D}(\textbf{a}, \textbf{s})$.  The \textit{open $n$-dimensional Berkovich polydisc over $\mathbb{K}$} of polyradius $\textbf{r}$ and center $\textbf{a}$ is the quasiaffinoid space given by the open subset
\[
D(\textbf{a},\textbf{r}) := \left\lbrace x \in \overline{D}(\textbf{a}, \textbf{r}) \mid \left\lvert T_i -a_i \right\rvert_x < r_i \text{ for all } i \right\rbrace.
\]
One small pitfall with this notation is that the closed polydisc $\overline{D}(\textbf{a}, \textbf{r})$ is not the topological closure of the open polydisc $D(\textbf{a}, \textbf{r})$.  The \textit{$n$-dimensional Berkovich affine space over $\mathbb{K}$} is the set $\mathbb{A}^{n,\text{an}}_\mathbb{K}$ of all multiplicative seminorms on $\mathbb{K}[\textbf{T}]=\mathbb{K}[T_1, \ldots, T_n]$ whose restriction to $\mathbb{K}$ is bounded \footnote{This condition actually implies that its restriction to $\mathbb{K}$ agrees with the absolute value on $\mathbb{K}$.}, equipped with the weakest topology such that $x \mapsto \left\lvert f(x) \right\rvert$ is continuous for all $f \in \mathbb{K}[\textbf{T}]$.  Note that the open (and closed) $n$-dimensional Berkovich polydiscs can be naturally identified as subsets of $\mathbb{A}^{n,\text{an}}_\mathbb{K}$, which endows them with an analytic structure.  The structure sheaf $\mathcal{O}$ is the set of locally uniform limits of $n$-variable rational functions without poles in an appropriate sense.  Note that this implies that uniform limits of analytic functions on compact subsets of Berkovich affine space are analytic.   Also, for any Berkovich $\mathbb{K}$-analytic space $X$, there is a natural correspondence between global sections of the structure sheaf $\mathcal{O}_X$ and the set of morphisms from $X$ to $\mathbb{A}^{1,\text{an}}_\mathbb{K}$.

As with the corresponding variety, the \textit{$n$-dimensional Berkovich projective space} $\mathbb{P}^{n,\text{an}}_\mathbb{K}$ can be constructed in several ways:
\begin{itemize}
\item Using homogeneous coordinates: A multiplicative seminorm $x$ on $\mathbb{K}[T_0, \ldots, T_n]$ is called \textit{homogeneous} if it is determined by its values on homogeneous elements $f_d$ via the formula
\[
\left\lvert \sum_d f_d(x) \right\rvert = \max_d \left\lvert f_d(x) \right\rvert.
\]
We can view $\mathbb{P}^{n,\text{an}}_\mathbb{K}$ as the set of equivalence classes of homogeneous multiplicative seminorms on $\mathbb{K}[T_0, \ldots, T_n]$ that do not vanish identically on the set of degree one homogeneous polynomials and whose restriction to $\mathbb{K}$ is bounded, under the equivalence relation defined as follows: $x \sim y$ if and only there exists a constant $C$ such that $\left\lvert f_d(x) \right\vert = C^d \left\lvert f_d(y) \right\vert$ for all $d$ and all degree $d$ homogeneous polynomials $f_d$.
\item We can also construct $\mathbb{P}^{n,\text{an}}_\mathbb{K}$ by gluing $n+1$ copies of $\mathbb{A}^{n,\text{an}}_\mathbb{K}$ or by gluing $n+1$ copies of the $n$-dimensional closed unit polydisc $\overline{D}(\textbf{0},\textbf{1})$.  In both cases, the gluing is obtained by using $T_0/T_i, \ldots, \widehat{T_i/T_i}, \ldots T_n/T_i$ for the coordinates on the $i$-th copy.  This construction provides an analytic atlas that gives $\mathbb{P}^{n,\text{an}}_\mathbb{K}$ the structure of a Berkovich space.
\item For the one-dimensional case, we can view the Berkovich projective line as the one point compactification of the Berkovich affine line $\mathbb{P}^{1,\text{an}}_\mathbb{K} = \mathbb{A}^{1,\text{an}}_\mathbb{K} \cup \left\lbrace \infty \right\rbrace$.
\end{itemize}
For both affine and projective Berkovich spaces, we will omit the subscript when working over the default algebraically closed non-archimedean field $K$, i.e. $\mathbb{A}^{n,\text{an}} := \mathbb{A}^{n,\text{an}}_K$ and $\mathbb{P}^{n,\text{an}} := \mathbb{P}^{n,\text{an}}_K$.

The most important Berkovich space for our purposes is the product space $\mathbb{P}^{1,\text{an}} \times \mathbb{A}^{1,\text{an}}$.  Note that, as in the case of schemes, this is not a topological product space.  Instead, we can construct this space by gluing two copies of $\mathbb{A}^{2,\text{an}}$; viewing $\mathbb{A}^{2,\text{an}}$ as the set of multiplicative seminorms on the algebra $K[z,\lambda]$ with $z$ as the dynamical space coordinate and $\lambda$ as the parameter space coordinate, the gluing is given by $z \mapsto 1/z$.  We can also view $\mathbb{P}^{1,\text{an}} \times \mathbb{A}^{1,\text{an}}$ as a set of equivalence classes of homogeneous multiplicative seminorms on $K[\lambda][Z_0,Z_1]$ as above.  This product comes equipped with canonical analytic projections $\pi_1: \mathbb{P}^{1,\text{an}} \times \mathbb{A}^{1,\text{an}} \longrightarrow \mathbb{P}^{1,\text{an}}$ and
$\pi_2: \mathbb{P}^{1,\text{an}} \times \mathbb{A}^{1,\text{an}} \longrightarrow \mathbb{A}^{1,\text{an}}$ induced by the inclusions $K[Z_0,Z_1] \hookrightarrow K[Z_0,Z_1,\lambda]$ and $K[\lambda] \hookrightarrow K[Z_0,Z_1,\lambda]$.  Given an open set $U \subseteq \mathbb{A}^{1,\text{an}}$, we obtain a corresponding open set $\mathbb{P}^{1,\text{an}} \times U := \pi_2^{-1}(U)$, which has an induced Berkovich space structure.  Given a point $x \in \mathbb{A}^{1,\text{an}}$, the fiber $\pi_2^{-1}(x)$ can be naturally identified with the Berkovich projective line over $\mathcal{H}(x)$.  We will write $\mathbb{P}^{1,\text{an}}_x := \mathbb{P}^{1,\text{an}}_{\mathcal{H}(x)}$ for this fiber and $\infty_x$ for its point at infinity.  This description will allow us to consider dynamical systems corresponding to parameters in $\mathbb{A}^{1,\text{an}} \setminus \mathbb{A}^{1}(K)$.  The field $\mathcal{H}(x)$ is a complete non-archimedean field, but it is not algebraically closed in general, so we will need to discuss the structure of and dynamics on the Berkovich projective line over non-algebraically closed fields in \S \ref{sec:dynbpl}.

We will frequently encounter analytic sections of the projection $\pi_2$, i.e. analytic maps $\phi: U \longrightarrow \mathbb{P}^{1,\text{an}} \times U$ such that $\pi_2 \circ \phi = \text{id}$.  The following proposition gives a useful description of such maps.

\begin{prop} \label{prop:ansec}
Let $U \subseteq \mathbb{A}^{1,\text{an}}$ be an open set.  The map $\phi \longmapsto \pi_1 \circ \phi$ is a bijection from the set of analytic sections of the projection $\pi_2: \mathbb{P}^{1,\text{an}} \times U \longrightarrow U$ onto the set of analytic maps $U \longrightarrow \mathbb{P}^{1,\text{an}}$.  Moreover, for any such $\phi$, we have that $\phi(x) \in \mathbb{P}^{1}(\mathcal{H}(x))$ for all $x \in U$, viewing $\phi(x)$ as an element of the fiber $\mathbb{P}^{1,\text{an}}_x$ 
\end{prop}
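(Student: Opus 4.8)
The plan is to deduce both assertions from the universal property of the product, together with the fact recalled above that $\mathbb{A}^{1,\text{an}}$ represents the functor $Y \mapsto \mathcal{O}_Y(Y)$. First I would record the product statement in the form I need: for any Berkovich $K$-analytic space $Y$, an analytic map $Y \to \mathbb{P}^{1,\text{an}} \times \mathbb{A}^{1,\text{an}}$ is the same datum as a pair of analytic maps $Y \to \mathbb{P}^{1,\text{an}}$ and $Y \to \mathbb{A}^{1,\text{an}}$. This follows directly from the construction of $\mathbb{P}^{1,\text{an}} \times \mathbb{A}^{1,\text{an}}$ by gluing two copies of $\mathbb{A}^{2,\text{an}} = \mathbb{A}^{1,\text{an}} \times \mathbb{A}^{1,\text{an}}$ along $z \mapsto 1/z$: a map to each chart $\mathbb{A}^{2,\text{an}}$ is a pair of global functions, i.e.\ a pair of maps to $\mathbb{A}^{1,\text{an}}$, and since the gluing leaves the $\lambda$-coordinate untouched these glue exactly when the $\mathbb{P}^{1,\text{an}}$-components glue.

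Next I would match sections with the two factors. Given an analytic section $\phi: U \to \mathbb{P}^{1,\text{an}} \times U$ of $\pi_2$, composing with the open immersion $\mathbb{P}^{1,\text{an}} \times U \hookrightarrow \mathbb{P}^{1,\text{an}} \times \mathbb{A}^{1,\text{an}}$ gives an analytic map $\widetilde{\phi}: U \to \mathbb{P}^{1,\text{an}} \times \mathbb{A}^{1,\text{an}}$ whose $\mathbb{A}^{1,\text{an}}$-component is the inclusion $U \hookrightarrow \mathbb{A}^{1,\text{an}}$. Conversely, any analytic map $\widetilde{\phi}: U \to \mathbb{P}^{1,\text{an}} \times \mathbb{A}^{1,\text{an}}$ with $\mathbb{A}^{1,\text{an}}$-component the inclusion has image in $\pi_2^{-1}(U) = \mathbb{P}^{1,\text{an}} \times U$, and since this is an open subspace with the induced structure, $\widetilde{\phi}$ factors analytically through it to give a section. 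By the previous paragraph such maps $\widetilde{\phi}$ correspond bijectively to their $\mathbb{P}^{1,\text{an}}$-components, i.e.\ to analytic maps $U \to \mathbb{P}^{1,\text{an}}$, and this correspondence is precisely $\widetilde{\phi} \mapsto \pi_1 \circ \widetilde{\phi} = \pi_1 \circ \phi$. This gives the bijection: injectivity is the uniqueness clause of the product property, and surjectivity is its existence clause applied to a given $\psi: U \to \mathbb{P}^{1,\text{an}}$ paired with the inclusion $U \hookrightarrow \mathbb{A}^{1,\text{an}}$.

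For the ``moreover'' statement I would work in a chart near the point. Fix $x \in U$ and a section $\phi$. Since $\phi$ is a section, $\pi_2(\phi(x)) = x$, so $\phi(x)$ lies in the fiber $\pi_2^{-1}(x) = \mathbb{P}^{1,\text{an}}_{\mathcal{H}(x)}$. The two standard charts cover $\mathbb{P}^{1,\text{an}} \times \mathbb{A}^{1,\text{an}}$, and the swap $z \mapsto 1/z$ is an automorphism of $\mathbb{P}^{1,\text{an}}$ carrying $\mathbb{P}^1(\mathcal{H}(x))$ to itself, so I may assume $\phi(x)$ lies in the chart $\mathbb{A}^{2,\text{an}}$ with coordinates $(z,\lambda)$. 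On the open neighborhood $U' := \phi^{-1}(\mathbb{A}^{2,\text{an}})$ of $x$, the map $\phi\restrict{U'}: U' \to \mathbb{A}^{2,\text{an}}$ is given by a pair $(f, \lambda\restrict{U'}) \in \mathcal{O}_U(U')^2$, where $f = z \circ \phi\restrict{U'}$ and the second component is forced to be the coordinate function $\lambda$ restricted to $U'$ because $\phi$ is a section. Pushing forward through the structure homomorphism $\mathcal{O}_{U,x} \to \mathcal{H}(x)$, $g \mapsto g(x)$, one finds $\left\lvert P(\phi(x)) \right\rvert = \left\lvert P(f(x), \lambda(x)) \right\rvert_{\mathcal{H}(x)}$ for every $P \in K[z,\lambda]$; unwinding the identification of $\pi_2^{-1}(x)$ with $\mathbb{P}^{1,\text{an}}_{\mathcal{H}(x)}$, this exhibits $\phi(x)$ as the seminorm on $\mathcal{H}(x)[z]$ given by evaluation at $f(x)$, namely the classical point $f(x) \in \mathbb{A}^1(\mathcal{H}(x)) \subseteq \mathbb{P}^1(\mathcal{H}(x))$.

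I expect the routine bookkeeping with the gluing charts to be harmless; the two points that need actual care are the claim that a continuous map landing in an open subspace factors analytically through it (so that the first part even makes sense) and, in the last paragraph, the compatibility of passing from a seminorm on $K[z,\lambda]$ to a seminorm on $\mathcal{H}(x)[z]$ with the fiber identification $\pi_2^{-1}(x) \cong \mathbb{P}^{1,\text{an}}_{\mathcal{H}(x)}$ --- this is what turns ``rational in the chart'' into ``a point of $\mathbb{P}^1(\mathcal{H}(x))$''. Everything else is a formal consequence of the product's universal property.
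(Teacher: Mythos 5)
Your proposal is correct and takes essentially the same route as the paper's proof: both reduce the bijection to the correspondence between global sections of $\mathcal{O}$ and maps to $\mathbb{A}^{1,\text{an}}$ via the two affine charts of $\mathbb{P}^{1,\text{an}}$, and both obtain the ``moreover'' by pushing the local defining function through $\mathcal{O}_{U,x} \to \mathcal{H}(x)$. The only stylistic difference is that you package the first step as a universal property of the product before unwinding it, whereas the paper directly writes down the inverse $\psi \mapsto \phi_\psi$ (setting $\phi_\psi(x) := \infty_x$ off $V = \psi^{-1}(\mathbb{A}^{1,\text{an}})$) and reads off the ``moreover'' from that explicit construction, which makes the last clause genuinely immediate rather than a second pass through the charts.
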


\begin{proof}
Given $\psi: U \longrightarrow \mathbb{P}^{1,\text{an}}$ analytic and denoting $V := \psi^{-1}(\mathbb{A}^{1,\text{an}})$, we see that $\psi\restrict{V} \in \mathcal{O}_U(V)$.  For $x \in V$, we can then apply the canonical map $\mathcal{O}_U(V) \longrightarrow \mathcal{H}(x)$ to obtain an element $\phi_\psi(x) \in \mathcal{H}(x)$.  For $x \in U \setminus V$, set $\phi_\psi(x) = \infty_x$.  Note that on an affinoid domain $\mathcal{M}(\mathscr{A})$ of $V$, $\phi_\psi$ is just the analytic map $\mathcal{M}(\mathscr{A}) \longrightarrow \mathbb{A}^{1,\text{an}} \times \mathcal{M}(\mathscr{A})$ induced by the evaluation morphism
\begin{align*}
\mathscr{A}[z] &\longrightarrow \mathscr{A} \\
z &\longmapsto \psi\restrict{\mathcal{M}(\mathscr{A})}.
\end{align*}
Using this, one can check that $\phi_\psi: U \longrightarrow \mathbb{P}^{1,\text{an}} \times U$ is an analytic section of $\pi_2$.  It is also easy to see that the map $\psi \mapsto \phi_\psi$ is the inverse of the map in the statement of the proposition.  The ``Moreover'' statement follows immediately from the definition of $\phi_\psi$.
\end{proof}

To conclude this subsection, we discuss Berkovich space analogues of the inverse function theorem and the open mapping theorem that we will need later.  The following terminology will appear: a finite morphism of Berkovich spaces $Y \longrightarrow X$ is \textit{\'{e}tale} at a point $y \in Y$ if 
$\mathcal{O}_{Y,y} / \mathfrak{m}_{X,x} \mathcal{O}_{Y,y}$ is a finite separable field extension of $\kappa(x)$ and $\mathcal{O}_{Y,y}$ is a flat $\mathcal{O}_{X,x}$-module, where $x := \phi(y)$.

We will need the following fact about \'{e}tale morphisms.

\begin{prop} \label{prop:berketale}
Let $\phi: Y \longrightarrow X$ be a finite morphism of (good) Berkovich spaces.  Assume $\phi$ is \'{e}tale at a point $y \in Y$ and satisfies $\mathcal{H}(y) = \mathcal{H}(\phi(y))$.  Then, $\phi$ is a local isomorphism of Berkovich spaces at $y$.
\end{prop}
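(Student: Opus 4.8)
Write $x := \phi(y)$. The plan is to show that the multiplicity $m_\phi(y)$ equals $1$, and then to deduce from ``finite flat of rank one'' that $\phi$ identifies a neighborhood of $y$ with a neighborhood of $x$.

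First I would read off consequences of the \'{e}tale hypothesis on stalks. Since $\phi^\#: \mathcal{O}_{X,x} \to \mathcal{O}_{Y,y}$ is a local homomorphism we have $\mathfrak{m}_{X,x}\mathcal{O}_{Y,y} \subseteq \mathfrak{m}_{Y,y}$, so $L := \mathcal{O}_{Y,y}/\mathfrak{m}_{X,x}\mathcal{O}_{Y,y}$ surjects onto $\kappa(y)$; as $L$ is a field (by the definition of \'{e}tale) this is an isomorphism, so $\kappa(y)$ is a finite separable extension of $\kappa(x)$ and $m_\phi(y) = \dim_{\kappa(x)} L = [\kappa(y):\kappa(x)]$. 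Now $\kappa(x)$ is quasicomplete, hence Henselian, hence separably algebraically closed in its completion $\mathcal{H}(x)$; since $\kappa(y)$ is a finite separable extension of $\kappa(x)$ contained in $\mathcal{H}(y) = \mathcal{H}(x)$, this forces $\kappa(y) = \kappa(x)$. Therefore $m_\phi(y) = 1$ and $L = \kappa(x)$.

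Next I would move to affinoid charts and conclude. The analytic fiber $\phi^{-1}(x)$ is the spectrum of a finite $\mathcal{H}(x)$-algebra, so it is finite and discrete and $y$ is isolated in it; as finite morphisms are closed, I may choose affinoid neighborhoods $\mathcal{M}(\mathscr{A})$ of $x$ and $\mathcal{M}(\mathscr{B})$ of $y$ with $\phi^{-1}(\mathcal{M}(\mathscr{A})) = \mathcal{M}(\mathscr{B})$, $\mathscr{B}$ finite over $\mathscr{A}$, and $\phi^{-1}(x) = \{y\}$. Then $\mathcal{O}_{Y,y}$ is a finite $\mathcal{O}_{X,x}$-module — finitely presented, as $\mathscr{A}$ is Noetherian — which is flat over the local ring $\mathcal{O}_{X,x}$ by hypothesis, hence free, of rank $\dim_{\kappa(x)} L = 1$. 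Since the unit generates the residue fiber, Nakayama gives that $\phi^\#$ is surjective, and faithful flatness gives that it is injective, so $\phi^\#: \mathcal{O}_{X,x} \to \mathcal{O}_{Y,y}$ is an isomorphism. To globalize this, observe that the kernel and cokernel of $\mathscr{A} \to \mathscr{B}$ are finite $\mathscr{A}$-modules with vanishing stalk at $x$ (these stalks are $\ker\phi^\#$ and $\operatorname{coker}\phi^\#$, using $\phi^{-1}(x) = \{y\}$), so their supports are closed subsets of $\mathcal{M}(\mathscr{A})$ not containing $x$; restricting to an affinoid neighborhood $X'$ of $x$ in the complement of both supports turns $\mathscr{A}' \to \mathscr{B}'$ into an isomorphism of affinoid algebras, whence $\phi$ restricts to an isomorphism $\phi^{-1}(X') \xrightarrow{\sim} X'$, a local isomorphism at $y$.

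The genuinely substantive step is the reduction to $m_\phi(y) = 1$, which is where quasicompleteness of $\kappa(x)$ enters (via Krasner's lemma / Henselianity, so that a finite separable subextension of $\mathcal{H}(x)/\kappa(x)$ is trivial); everything afterward is the standard fact that a finite flat morphism of rank one is an isomorphism, together with routine bookkeeping passing between stalks and affinoid models. The point requiring the most care in that bookkeeping is checking that the analytic flatness hypothesis really produces a finitely presented flat $\mathcal{O}_{X,x}$-module, so that the Nakayama and faithful-flatness arguments apply as stated.
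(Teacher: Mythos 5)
Your argument is correct, and it is genuinely a different route from the paper's: the paper simply cites Theorem 3.4.1 of \cite{Ber93} as a black box, whereas you reconstruct the relevant first step of Berkovich's proof from scratch. The key non-formal input is, as you identify, the quasicompleteness of $\kappa(x)$. One small remark here: rather than invoking ``quasicomplete $\Rightarrow$ Henselian $\Rightarrow$ separably algebraically closed in its completion,'' you can simply apply the paper's own Proposition~\ref{prop:weaklystab}, which was set up for exactly this purpose: since $\kappa(y)$ is a finite separable extension of $\kappa(x)$, it gives $[\mathcal{H}(y):\mathcal{H}(x)] = [\kappa(y):\kappa(x)]$, so the hypothesis $\mathcal{H}(y)=\mathcal{H}(x)$ forces $\kappa(y)=\kappa(x)$ directly. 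Everything downstream — finite flat of rank one is free, Nakayama plus faithful flatness gives the stalk isomorphism, then coherence of the kernel and cokernel lets you spread the isomorphism to a neighborhood — is the standard package, and you correctly flag that the bookkeeping point is knowing $\mathcal{O}_{Y,y}$ is a finitely presented flat $\mathcal{O}_{X,x}$-module; this uses that the Berkovich local ring is Noetherian and that for a finite morphism $\mathcal{O}_{Y,y}$ is a direct factor of $\mathcal{O}_{X,x}\otimes_{\mathscr{A}}\mathscr{B}$. The only place you are slightly loose is the claim that you can arrange $\phi^{-1}(\mathcal{M}(\mathscr{A}))=\mathcal{M}(\mathscr{B})$ with $\phi^{-1}(x)=\{y\}$: if the original fiber over $x$ has several points, shrinking the base disconnects the preimage, and you should take $\mathcal{M}(\mathscr{B})$ to be the open-and-closed piece containing $y$ rather than the whole preimage; since a union of connected components of an affinoid is again an affinoid finite over the base, this is only a cosmetic adjustment and does not affect the rest of the argument.
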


\begin{proof}
This is a special case of Theorem 3.4.1 in \cite{Ber93} and in fact appears as one of the first steps in the proof of this much more general theorem.
\end{proof}

\begin{lem} \label{lem:ift}
Let $U \subseteq \mathbb{A}^{1,\text{an}}$ be a connected open subset.  Let $g(\lambda,z)$ be a monic polynomial in $z$ whose coefficients are analytic functions of $\lambda$ defined on $U$, which we will view as an analytic function on $\mathbb{A}^{1,\text{an}} \times U$.  Let $\mathcal{C}$ be the Berkovich curve in $\mathbb{A}^{1,\text{an}} \times U \subseteq \mathbb{A}^{2,\text{an}}$ defined by the vanishing of $g(\lambda,z)$.  Then, $\pi_2: \mathcal{C} \longrightarrow U$ is locally an isomorphism of Berkovich spaces at any point $\xi \in \mathcal{C}$ where the multiplicity of $\pi_2$ is $1$.
\end{lem}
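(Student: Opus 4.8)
The strategy is to verify the hypotheses of Proposition \ref{prop:berketale} for the morphism $\pi_2 : \mathcal{C} \longrightarrow U$ at the point $\xi$. That proposition requires three things: that $\pi_2$ be finite near $\xi$, that it be \'{e}tale at $\xi$ (i.e. the fiber ring is a finite separable extension of $\kappa(x)$ and $\mathcal{O}_{\mathcal{C},\xi}$ is flat over $\mathcal{O}_{U,x}$, where $x = \pi_2(\xi)$), and that $\mathcal{H}(\xi) = \mathcal{H}(x)$. I will address these in turn.

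First, finiteness: since $g(\lambda,z)$ is monic in $z$ of some degree $d$, the ring $\mathcal{O}_U(V)[z]/(g)$ is a free $\mathcal{O}_U(V)$-module of rank $d$ for any affinoid domain $V \subseteq U$, and one checks that $\pi_2^{-1}(V) \cap \mathcal{C}$ is the affinoid domain $\mathcal{M}(\mathscr{A}_V[z]/(g))$ with $\mathscr{A}_V[z]/(g)$ finite over $\mathscr{A}_V$; taking $V$ to be a small affinoid neighborhood of $x$ gives finiteness of $\pi_2$ near $\xi$. Flatness of $\mathcal{O}_{\mathcal{C},\xi}$ over $\mathcal{O}_{U,x}$ is then immediate from freeness (a localization of a free module is flat). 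For the separability and the residue-field conditions, I would argue as follows: the multiplicity $m_{\pi_2}(\xi) = \dim_{\kappa(x)}\bigl(\mathcal{O}_{\mathcal{C},\xi}/\mathfrak{m}_{U,x}\mathcal{O}_{\mathcal{C},\xi}\bigr) = 1$ by hypothesis. But $\mathcal{O}_{\mathcal{C},\xi}/\mathfrak{m}_{U,x}\mathcal{O}_{\mathcal{C},\xi}$ is a ring that surjects onto $\kappa(\xi) = \mathcal{O}_{\mathcal{C},\xi}/\mathfrak{m}_{\mathcal{C},\xi}$, which is a nontrivial field extension of $\kappa(x)$ (it contains the image of $\kappa(x)$); since the whole quotient has $\kappa(x)$-dimension $1$, it must equal $\kappa(x)$ itself, and in particular $\kappa(\xi) = \kappa(x)$. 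A field extension of degree $1$ is trivially finite and separable, so $\pi_2$ is \'{e}tale at $\xi$. Finally, $\kappa(\xi) = \kappa(x)$ together with the fact that the absolute value on $\kappa(\xi)$ restricts to that on $\kappa(x)$ (it is induced from the quotient norm on the affinoid algebra of $\mathcal{C}$, which refines the one on $U$) forces $\mathcal{H}(\xi) = \mathcal{H}(x)$ upon completing.

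With all hypotheses of Proposition \ref{prop:berketale} verified, we conclude that $\pi_2 : \mathcal{C} \longrightarrow U$ is a local isomorphism of Berkovich spaces at $\xi$, which is exactly the assertion of the lemma.

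The step I expect to be the main obstacle is the careful bookkeeping showing that $\mathcal{O}_{\mathcal{C},\xi}/\mathfrak{m}_{U,x}\mathcal{O}_{\mathcal{C},\xi}$ being one-dimensional over $\kappa(x)$ genuinely forces both the triviality of the residue field extension \emph{and} the equality $\mathcal{H}(\xi) = \mathcal{H}(x)$ of completions — i.e. that no ramification or inseparability is hiding in the passage to completions. This is where quasicompleteness of $\kappa(x)$ (and the fact, recorded in the excerpt, that affinoid domains carry quotient norms) does the real work: the unique extension of the absolute value means there is no room for the norm on $\mathcal{H}(\xi)$ to differ from that on $\mathcal{H}(x)$ once the algebraic extension is trivial. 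One should also take a moment to confirm that the stalk $\mathcal{O}_{\mathcal{C},\xi}$ really is the expected localization of $\mathscr{A}_V[z]/(g)$ and that "multiplicity $1$" in the statement is the multiplicity $m_\phi$ defined earlier via $\dim_{\kappa(x)}$, so that the numerical input is being used correctly.
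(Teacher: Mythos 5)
Your proposal is correct and follows essentially the same route as the paper: verify the hypotheses of Proposition \ref{prop:berketale} by using the monicity of $g$ to get finiteness and flatness (freeness of $\mathscr{A}[z]/(g)$ over $\mathscr{A}$), then use the multiplicity-one hypothesis to conclude that the fiber ring equals $\kappa(x)$, hence $\kappa(\xi)=\kappa(x)$ and $\mathcal{H}(\xi)=\mathcal{H}(x)$. Your explicit argument that the one-dimensional fiber ring surjects onto $\kappa(\xi)$ is a slightly more detailed version of the step the paper compresses; one small wording slip is calling $\kappa(\xi)$ a ``nontrivial field extension'' of $\kappa(x)$ when you mean simply a nonzero $\kappa(x)$-vector space containing $\kappa(x)$.
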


\begin{proof}
Denote $x := \pi_2(\xi)$.  In view of Proposition \ref{prop:berketale}, it suffices to show that $\pi_2 \restrict{\mathcal{C}}$ is \'{e}tale at $\xi$ and $\mathcal{H}(\xi) = \mathcal{H}(x)$.  For this, let $V := \mathcal{M}(\mathscr{A})$ be a closed affinoid neighborhood of $x$ in $U$.  Then, viewing $g(\lambda,z)$ as an element of $\mathscr{A} [ z ]$, we have that $\left( \pi_2 \restrict{\mathcal{C}} \right)^{-1}(V)$ is isomorphic to the Berkovich affinoid $\mathcal{M}(\mathscr{B})$, where $\mathscr{B} = \mathscr{A} [ z ] /(g)$.  Since $g$ is monic, we see that the algebra $\mathscr{B}$ is a free $\mathscr{A}$-module and hence is a flat $\mathscr{A}$-module.  Additionally, since $\pi_2: \mathcal{C} \longrightarrow U$ has multiplicity $1$ at $\xi$, we know that $\mathcal{O}_{\mathcal{M}(\mathscr{B}),\xi} = \mathcal{O}_{\mathcal{M}(\mathscr{A}),x}$, and in particular that
\[
\mathcal{O}_{\mathcal{M}(\mathscr{B}),\xi} / \mathfrak{m}_{\mathcal{M}(\mathscr{A}),x} \mathcal{O}_{\mathcal{M}(\mathscr{B}),\xi} = \kappa(x).
\]
We conclude that $\pi_2 \restrict{\mathcal{C}}$ is \'{e}tale at $\xi$.  The above computation also gives $\kappa(\xi) = \kappa(x)$ and hence $\mathcal{H}(\xi) = \mathcal{H}(x)$ after passing to completions.
\end{proof}

The open mapping theorem analogue that we need is the following result of Berkovich.
\begin{thm}[\cite{Ber90}*{Lemma 3.2.4}] \label{thm:openmap}
Let $\phi: Y \longrightarrow X$ be a finite morphism of pure dimensional $\mathbb{K}$-analytic spaces, and suppose that $X$ is locally irreducible and $\text{dim}(X) = \text{dim}(Y)$.  Then, $\phi$ is an open map.
\end{thm}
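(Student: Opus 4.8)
My plan is to reduce the statement to an assertion about finite extensions of affinoid domains and then bring in the going-down theorem. Since openness is local on both source and target, I would first reduce to the case in which $X = \mathcal{M}(\mathscr{A})$ and $Y = \mathcal{M}(\mathscr{B})$ are $\mathbb{K}$-affinoid with $\mathscr{B}$ finite over $\mathscr{A}$; using local irreducibility of $X$ we may shrink further so that $X$ is irreducible, and passing to the reductions $X_{\mathrm{red}}$, $Y_{\mathrm{red}}$ (which does not alter the underlying continuous map, hence is harmless for openness) we may take $\mathscr{A}$ to be an affinoid domain. Decomposing $Y$ into its finitely many irreducible components $Y_1, \dots, Y_m$ and using $\phi(U) = \bigcup_i \phi(U \cap Y_i)$, it suffices to prove each $\phi|_{Y_i}\colon Y_i \to X$ is open, so I may also assume $Y$ is irreducible, i.e.\ $\mathscr{B}$ a domain. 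Finally, since $\phi$ is finite with $\dim Y = \dim X$, the kernel of $\mathscr{A} \to \mathscr{B}$ must vanish — otherwise $Y$ maps finitely onto the proper closed analytic subset of $X$ cut out by that kernel, forcing $\dim Y < \dim X$ — so $\mathscr{A} \hookrightarrow \mathscr{B}$ is a finite injective extension of $d$-dimensional affinoid domains.

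The next step is to arrange that the base is normal. Affinoid algebras are excellent, so the integral closure $\mathscr{A}'$ of $\mathscr{A}$ in $\operatorname{Frac}(\mathscr{A})$ is module-finite over $\mathscr{A}$, and $\nu\colon \tilde{X} := \mathcal{M}(\mathscr{A}') \to X$ is the normalization. Because $X$ is irreducible and locally irreducible, $\nu$ is a finite bijective morphism of compact Hausdorff spaces, hence a homeomorphism; one checks that the base-changed morphism $\tilde{Y} \to Y$, with $\tilde{Y}$ finite over $\tilde{X}$, is likewise finite and bijective, hence also a homeomorphism. As $\phi$ is open exactly when $\tilde{Y} \to \tilde{X}$ is, I may replace $X$ by $\tilde{X}$ and thus assume $\mathscr{A}$ is an integrally closed affinoid domain sitting inside the domain $\mathscr{B}$, with $\mathscr{B}$ finite over $\mathscr{A}$.

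It remains to prove openness for such an extension, and here the relevant input is Krull's going-down theorem, which applies precisely because $\mathscr{A}$ is integrally closed in its fraction field and $\mathscr{B}$ is a domain. The idea is to fix $y \in Y$ with image $x = \phi(y)$ and a basic open neighborhood $V = \{\, y' : |g_i(y')| < r_i, \ |h_j(y')| > s_j \,\}$ of $y$, and to produce a neighborhood of $x$ inside $\phi(V)$ by norm bookkeeping: each $g_i$ is integral over the normal ring $\mathscr{A}$, so its minimal polynomial over $\operatorname{Frac}(\mathscr{A})$ has coefficients in $\mathscr{A}$, and in particular $N_i := N_{\operatorname{Frac}(\mathscr{B})/\operatorname{Frac}(\mathscr{A})}(g_i) \in \mathscr{A}$, with $|N_i(x')| = \prod_{y' \mapsto x'} |g_i(y')|^{m_\phi(y')}$ over the dense open locus where $\phi$ is flat. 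Combining this with going-down — which says every generization of $x$ lifts to a generization of $y$ — one wants to show that a set of the shape $\{\, x' : |N_i(x')| < r_i^{\,n},\ \dots \,\}$, with $n = [\operatorname{Frac}(\mathscr{B}):\operatorname{Frac}(\mathscr{A})]$ and corrected near the non-flat locus, is a neighborhood of $x$ contained in $\phi(V)$. I expect this conversion to be the main obstacle: going-down is a statement in the Zariski topology of $\operatorname{Spec}$, whereas we need openness for the strictly finer Berkovich topology on $\mathcal{M}(\mathscr{A})$, so the algebraic input must be descended to the analytic space — presumably by induction on $d$, cutting $X$ down by hypersurfaces through $x$ to reach the one-dimensional case (where a finite extension of a normal curve is torsion-free, hence flat, hence open), while controlling multiplicities so as to cope with wild ramification in positive residue characteristic, where Proposition~\ref{prop:berketale} cannot simply be applied over the complement of the branch locus. (This is, in essence, the content of \cite{Ber90}*{Lemma~3.2.4}.)
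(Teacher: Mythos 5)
The paper does not actually prove this statement; Theorem~\ref{thm:openmap} is cited verbatim from \cite{Ber90}*{Lemma~3.2.4} and used as a black box, so there is no in-paper proof to match your attempt against. What you have written is therefore an independent attempt at a nontrivial theorem. The preparatory reductions are broadly sensible: passing to affinoids, to reduced rings, to an irreducible target, and observing that $\dim Y = \dim X$ together with finiteness forces $\mathscr{A} \hookrightarrow \mathscr{B}$ are all fine. The normalization step is slightly off as written, though repairable: if $\tilde Y$ means the literal fiber product $Y \times_X \tilde X$ it need not be good, and its map to $Y$ need not be bijective even when $\nu\colon \tilde X \to X$ is (the fiber over $y$ involves $\kappa(y) \otimes_{\kappa(x)} \kappa(\tilde x)$). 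The cleaner route is to take $\tilde Y$ to be the normalization of $Y$, note $\tilde Y \to Y$ is finite surjective, and check openness of $\tilde\phi\colon \tilde Y \to \tilde X$ suffices via $(\nu^{-1}\circ\phi)(U) = \tilde\phi(\pi^{-1}(U))$. You should also be aware that ``locally irreducible'' here needs to mean unibranch (irreducibility of the \emph{completed} local ring), or else normalization need not be bijective at all (a nodal curve is locally irreducible in the naive sense but is not unibranch, and the normalization map is visibly not open there).

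The genuine gap is the one you flag yourself: converting Krull going-down, a Zariski-topology statement about $\operatorname{Spec}\mathscr{A}$, into openness of $\mathcal{M}(\mathscr{B}) \to \mathcal{M}(\mathscr{A})$ for the finer Berkovich topology. This is not a routine descent and it is where the actual content of the lemma lives. The norm bookkeeping as outlined does not close it: from $\lvert N_i(x')\rvert < r_i^n$ you only learn that, for each $i$ separately, \emph{some} preimage $y'$ of $x'$ has $\lvert g_i(y')\rvert$ small, but you need a \emph{single} $y'$ near $y$ satisfying all the inequalities $\lvert g_i(y')\rvert < r_i$, $\lvert h_j(y')\rvert > s_j$ simultaneously — and the exponents in the norm factorization involve local degrees $[\mathcal{H}(y'):\mathcal{H}(x')]$ and multiplicities, not just $m_\phi(y')$. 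The proposed induction by cutting with hypersurfaces is only an intention, and the reduction to dimension one needs to explain why the hyperplane sections stay pure-dimensional and why multiplicities behave; nothing in the sketch actually handles this. As the proposal stands it is a plausible strategy, not a proof, and since the paper simply cites Berkovich the honest response is that this lemma should not be re-derived here but invoked as stated.
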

The definitions of dimension and irreducibility in the context of Berkovich spaces are analogous the corresponding notions in algebraic geometry.  See \cite{Ber90} for the precise formulations.

\subsection{The Berkovich Projective Line} \label{ssec:bpl}
The Berkovich projective line $\mathbb{P}^{1,\text{an}}_\mathbb{K}$ has many nice topological properties that make it especially useful for dynamics: it is a compact, Hausdorff, uniquely path-connected space that contains $\mathbb{P}^{1}(\mathbb{K})$ as a topological subspace; see \cite{BR10} or M. Jonsson's notes in \cite{DFN15} for a thorough treatment.  Note that unique path-connectedness implies that any connected subset is simply connected.  A base for its topology is formed by the collection of sets which are a connected component of the complement of a finite set, which we will call \textit{basic open sets}.  Denote the algebraic closure of $\mathbb{K}$ by $\mathbb{K}^a$ and its completion by $\widehat{\mathbb{K}^a}$.  The points $\xi \in \mathbb{P}^{1,\text{an}}_\mathbb{K}$ can be classified into four types:
\begin{itemize}
\item $\xi$ is of type I if $\mathcal{H}(\xi) \subseteq \widehat{\mathbb{K}^a}$.
\item $\xi$ is of type II if $\text{tr.deg}_{\widetilde{\mathbb{K} }} \widetilde{\mathcal{H}(\xi)} = 1$
\item $\xi$ is of type III if $\left\lvert \mathcal{H}(\xi)^\times \right\rvert \neq \left\lvert \mathbb{K}^\times \right\rvert$
\item $\xi$ is of type IV otherwise.
\end{itemize}
When $\mathbb{K}$ is algebraically closed, there is a simpler description: type I points are just the classical points of $\mathbb{P}^{1}(\mathbb{K})$, type II points are supremum norms over closed discs with radius in $\left\lvert \mathbb{K}^\times \right\rvert$, type III points are supremum norms over closed discs with radius not in $\left\lvert \mathbb{K}^\times \right\rvert$, and type IV points can be associated to nested sequences of closed discs with empty intersection.  We will use the notation $\zeta_{a,r}$ for the type II or III point associated to the closed disc $\overline{D}(a,r)$.  The point $\zeta_{0,1}$ is often called the \textit{Gauss point}.  There is a well-defined \textit{diameter map} $\xi \mapsto \text{diam}(\xi)$ that gives the radius of the disc associated to a type II or type III point, is $0$ at a type I point, and extends continuously along paths to type IV points.  To continue to make use of this description for $\mathbb{K}$ not algebraically closed, we note that the action of absolute Galois group $\text{Gal}(\mathbb{K}^\text{sep}/\mathbb{K})$ on $\mathbb{K}^a$ has a unique continuous extension to $\mathbb{P}^{1,\text{an}}_{\widehat{\mathbb{K}^a}}$.  Moreover, there is a canonical map $\pi: \mathbb{P}^{1,\text{an}}_{\widehat{\mathbb{K}^a}} \longrightarrow \mathbb{P}^{1,\text{an}}_\mathbb{K}$ given by quotienting by the action $\text{Gal}(\mathbb{K}^\text{sep}/\mathbb{K})$.  This quotient preserves types of points, and the diameter map also descends via this quotient.  See Section 3.9.1 of M. Jonsson's notes in \cite{DFN15}.

In view of the fact that $\mathbb{P}^{1,\text{an}}_\mathbb{K}$ is uniquely path-connected, we call the connected components of $\mathbb{P}^{1,\text{an}}_\mathbb{K} \setminus \left\lbrace \xi \right\rbrace$ the \textit{tangent directions} at $\xi$.  Type I and type IV points have a unique tangent direction, type III points have two tangent directions, and the set of tangent directions at a type II point can be naturally identified with the closed points of the scheme $\mathbb{P}^{1}_{\widetilde{\mathbb{K} }}$.  We will tend to write $\vec v$ for a tangent direction to emphasize that it is a direction, and use the notation $B(\vec v)$ when thinking of $\vec v$ as a subset of $\mathbb{P}^{1,\text{an}}_\mathbb{K}$.

Any analytic function $\psi$ defined on an open neighborhood of the Gauss point $\zeta_{0,1}$ has a reduction $\widetilde{\psi}$ which is a one-variable rational function defined over $\widetilde{\mathbb{K}}$.  If $\psi$ is a rational function, then $\widetilde{\psi}$ can be defined by multiplying the numerator and denominator by an element of $\mathbb{K}$ to ensure that all coefficients have absolute value less than or equal to $1$ and at least one coefficient has absolute value $1$, and then taking the reduction of each coefficient.  Otherwise, $\psi$ is a uniform limit of rational functions, so the reductions of a converging sequence of rational functions must stabilize.  A useful fact is that $\widetilde{\psi}$ is non-constant if and only if $\psi(\zeta_{0,1}) = \zeta_{0,1}$.  In this case, the action of $\psi$ on the tangent directions at $\zeta_{0,1}$ is given by the action of $\widetilde{\psi}$ on the closed points of the scheme $\mathbb{P}^{1}_{\widetilde{\mathbb{K}}}$.

In \S \ref{ssec:lamlemma}, we will also need some results about analytic functions on annuli.  For simplicity, we work over the algebraically closed field $K$ here.  An analytic function $\psi$ defined on an open annulus $D(a,s) \setminus \overline{D}(a,r) \subseteq \mathbb{A}^{1,\text{an}}$ can be expressed as a convergent Laurent series
\[
\psi(z) = \sum_{k = -\infty}^{\infty} b_k (z-a)^k
\]
with coefficients $b_k \in K$.  The \textit{inner Weierstrass degree} of $\psi$ is the largest integer $M$ such that $\left\lvert b_M \right\rvert r^M \geq \left\lvert b_k \right\rvert r^k$ for all $k$.  The \textit{outer Weierstrass degree} of $\psi$ is the smallest integer $N$ such that $\left\lvert b_N \right\rvert s^N \geq \left\lvert b_k \right\rvert s^k$ for all $k$.  The key property of these Weierstrass degrees is they allow us to count zeros.

\begin{prop} \label{prop:wdeg}
Suppose $\psi(z) = \sum_{k = -\infty}^{\infty} b_k (z-a)^k$ is an analytic function defined over $K$, not identically zero, on the open annulus $D(a,s) \setminus \overline{D}(a,r)$ with inner Weierstrass degree $M$ and outer Weierstrass degree $N$.  Then, $\psi$ has exactly $N-M$ zeros on the annulus counted with multiplicity.
\end{prop}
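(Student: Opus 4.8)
The plan is to read the zero count off the Newton polygon of the Laurent series of $\psi$, turning the global count on the open annulus into a telescoping sum of local contributions at finitely many ``critical radii,'' each of which is supplied by the classical zero-counting theorem for analytic functions on closed affinoid annuli.

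First I would translate so that $a=0$. For $t\in(r,s)$ set $L(t):=\max_{k}|b_{k}|\,t^{k}$, which is attained because $|b_{k}|\,t^{k}\to 0$ as $|k|\to\infty$ and is positive since $\psi\not\equiv 0$, and put $h(u):=\log L(e^{u})$ for $u=\log t$. Then $h$ is convex and piecewise-affine, being a supremum of the affine functions $u\mapsto\log|b_{k}|+ku$; its right derivative at $u$ is the largest index $k$ with $|b_{k}|\,e^{ku}=L(e^{u})$, and its left derivative is the smallest such index. Matching these against the definitions, the inner Weierstrass degree is $M=h'\bigl((\log r)^{+}\bigr)$ and the outer Weierstrass degree is $N=h'\bigl((\log s)^{-}\bigr)$. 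Since $M$ and $N$ are finite integers, the slope of $h$ on $(\log r,\log s)$ takes only the finitely many integer values between $M$ and $N$, so $h$ has finitely many breakpoints $u_{1}<\dots<u_{m}$ there, each with a positive integer slope-jump, and the jumps telescope:
\[
\sum_{i=1}^{m}\Bigl(h'(u_{i}^{+})-h'(u_{i}^{-})\Bigr)=h'\bigl((\log s)^{-}\bigr)-h'\bigl((\log r)^{+}\bigr)=N-M .
\]

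It then remains to identify each slope-jump with an honest count of zeros. Fix a breakpoint $u_{i}=\log t_{i}$ and choose $t',t''\in|K^{\times}|$ with $t'<t_{i}<t''$ close enough that $\bigl(\log t',\log t''\bigr)$ contains no breakpoint other than $u_{i}$; this is possible because $K$ is nontrivially valued and algebraically closed, so $|K^{\times}|$ is dense in $\mathbb{R}_{>0}$. The closed annulus $\overline{D}(0,t'')\setminus D(0,t')$ is a $K$-affinoid, and by the classical Weierstrass-preparation / Newton-polygon zero count on closed annuli (see \cite{BR10} or \cite{BGR84}), the number of zeros of $\psi$ there, with multiplicity, is the left derivative of $h$ at $\log t''$ minus the right derivative of $h$ at $\log t'$; by the choice of $t',t''$ this equals the slope-jump $h'(u_{i}^{+})-h'(u_{i}^{-})$. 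Applying the same count to annuli shrinking onto $t_{i}$ from either side shows that all of those zeros lie on the circle $|z|=t_{i}$ and that $\psi$ has no zeros at radii which are not breakpoints. Summing over the finitely many breakpoints, the total number of zeros of $\psi$ on $D(0,s)\setminus\overline{D}(0,r)$ is $\sum_{i}\bigl(h'(u_{i}^{+})-h'(u_{i}^{-})\bigr)=N-M$.

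The analytic substance here — the zero count on a single closed affinoid annulus via Weierstrass preparation — is classical and can simply be quoted, so the main obstacle is the Newton-polygon bookkeeping: confirming that $M$ and $N$ are exactly the one-sided derivatives $h'((\log r)^{+})$ and $h'((\log s)^{-})$, and handling the endpoints. The latter is essentially automatic, since $N-M<\infty$ forces finitely many breakpoints and finitely many zeros with no accumulation at $r$ or $s$; and if the suprema defining $M$ or $N$ happen not to be attained exactly at $r$ or $s$, one just passes to a slightly smaller annulus and reads $M$ and $N$ off as the eventually-constant one-sided limits.
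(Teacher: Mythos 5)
Your argument is correct, and it is essentially the standard Newton-polygon proof. Note, however, that the paper does not actually prove Proposition~\ref{prop:wdeg}; it simply cites Robert's textbook (\cite{Rob00}, Ch.~6, \S 2.2 and \S 2.6), which establishes the same statement by the same Newton-polygon mechanism. So you are not taking a different route from the paper so much as supplying the proof the paper outsources.

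A few remarks on the bookkeeping step you flag as ``the main obstacle.'' The identification $M = h'\bigl((\log r)^+\bigr)$ does require a short argument beyond ``matching definitions,'' and you are right that it comes down to eventual constancy of the slope. Concretely: since $h'(u^+)$ is integer-valued and non-decreasing, it is eventually constant as $u\to(\log r)^+$, say equal to some integer $k^*$. One checks $k^*\ge M$ by observing that if some maximizer $k_0<M$ beat $M$ at radius $e^u>r$, then combining $\log|b_{k_0}|+k_0 u > \log|b_M|+Mu$ with $\log|b_{k_0}|+k_0\log r \le \log|b_M|+M\log r$ forces $u<\log r$, a contradiction. Conversely $k^*\le M$: letting $u\to(\log r)^+$ in $\log|b_{k^*}|+k^*u \ge \log|b_M|+Mu$ shows $k^*$ is also a maximizer at $r$, so $k^*\le M$ by maximality of $M$. (Here one uses that $|b_k|r^k\to 0$ as $k\to+\infty$ --- which follows from convergence of $\psi$ at any radius in $(r,s)$ --- to guarantee the max at $r$ is attained with a genuine largest maximizer, so the proposition's hypothesis that the inner degree exists is actually needed.) The symmetric argument gives $N=h'\bigl((\log s)^-\bigr)$. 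Also note that your hedge about ``the suprema defining $M$ or $N$ not being attained at $r$ or $s$'' is vacuous: if they are not attained, the Weierstrass degrees are simply undefined and the statement does not apply. Finally, your zero-count formula on the closed affinoid annulus $\overline{D}(0,t'')\setminus D(0,t')$ is stated with the one-sided derivatives that actually compute the \emph{open} annulus count, but since you chose $t',t''$ off the breakpoints the circles $|z|=t',t''$ carry no zeros and the two counts agree, so no harm is done. With these small points made explicit, the proof is complete and correct.
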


\begin{proof}
This follows from Theorem 1 in Section 2.2 of Chapter 6 in \cite{Rob00} together with the remark in Section 2.6 of Chapter 6 in \cite{Rob00} that this result also applies to Laurent series.  (The result is only stated for a complete algebraically closed extension of $\mathbb{Q}_p$, but the proof works for any complete algebraically closed non-archimedean field.) 
\end{proof}

It follows that if $\psi$ has no zeros on the annulus, then its inner and outer Weierstrass degrees must coincide.  We will refer to this common value as the Weierstrass degree in this situation.

Note that Proposition \ref{prop:wdeg} also applies to an analytic function $\psi$ on the disc $D(a,s)$, if we take the inner Weierstrass degree to be $0$ (take the limit as $r$ goes to $0$).

The set of type II and III points can also be equipped with a metric $d_{\mathbb{H}}$ called the \textit{hyperbolic metric} defined by
\[
d_{\mathbb{H}}\zeta_{a,r},\zeta_{b,s}) := 2 \log \max \left\lbrace r, s, \left\lvert a-b \right\rvert \right\rbrace -\log r -\log s.
\]
This metric can also be naturally extended to type IV points.  On the other hand, it would give infinite distance between type I points and is not compatible with the usual topology on $\mathbb{P}^{1,\text{an}}$ introduced above.  It will be important to understand how analytic functions interact with this metric.

\begin{prop} \label{prop:scaling}
Suppose $\psi(z) = \sum_{k = -\infty}^{\infty} b_k (z-a)^k$ is an analytic function on the open annulus $D(a,s) \setminus \overline{D}(a,r)$ as above, and $\psi-b_0$ has inner and outer Weierstrass degrees both equal to $N$.  Then,
\[
d_{\mathbb{H}}(\psi(\zeta_{a,r}),\psi(\zeta_{a,s})) = \max \left\lbrace N, -N \right\rbrace d_{\mathbb{H}}(\zeta_{a,r},\zeta_{a,s}).
\]
\end{prop}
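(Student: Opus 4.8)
The plan is to identify each of the image points $\psi(\zeta_{a,r})$ and $\psi(\zeta_{a,s})$ explicitly as a type II or III point, reading off its center and its diameter, and then to substitute into the defining formula for $d_{\mathbb{H}}$. First I would recall the standard description of how an analytic map $\psi$ to $\mathbb{A}^{1,\text{an}}$ acts on a type II or III point: viewing $\psi$ as a morphism corresponding to the ring map $K[w] \to \mathcal{O}$, $w \mapsto \psi$, the pushed-forward seminorm $\psi(\zeta_{a,\rho})$ satisfies $\left\lvert P(w) \right\rvert_{\psi(\zeta_{a,\rho})} = \left\lvert P(\psi(z)) \right\rvert_{\zeta_{a,\rho}}$ for all $P \in K[w]$; in particular, for $P(w) = w - c$, since $\psi(z) - c = (b_0 - c) + \sum_{k \neq 0} b_k (z-a)^k$ and $\left\lvert \cdot \right\rvert_{\zeta_{a,\rho}}$ computes the Gauss norm of a (Laurent) series, we get $\left\lvert w - c \right\rvert_{\psi(\zeta_{a,\rho})} = \max\{ \left\lvert b_0 - c \right\rvert, \max_{k \neq 0} \left\lvert b_k \right\rvert \rho^k \}$. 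I would also recall that for a type II or III point $\xi \in \mathbb{A}^{1,\text{an}}$ one has $\text{diam}(\xi) = \inf_{c \in K} \left\lvert w - c \right\rvert_\xi$, with the infimum attained exactly at the centers of the associated disc.

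Next I would invoke the Weierstrass degree hypothesis. Writing $g := \psi - b_0$ (which we may assume is not identically zero, as otherwise $N$ is undefined and there is nothing to prove), the assumption that $g$ has inner Weierstrass degree $N$ says precisely that $\max_{k \neq 0} \left\lvert b_k \right\rvert r^k = \left\lvert b_N \right\rvert r^N$, and the outer Weierstrass degree being $N$ says $\max_{k \neq 0} \left\lvert b_k \right\rvert s^k = \left\lvert b_N \right\rvert s^N$; moreover $b_N \neq 0$ since $g \not\equiv 0$. Feeding $\rho \in \{r,s\}$ into the formula above and minimizing over $c$, the minimum is attained at $c = b_0$, so $\psi(\zeta_{a,r})$ is the type II or III point with center $b_0$ and diameter $\delta_r := \left\lvert b_N \right\rvert r^N$, and $\psi(\zeta_{a,s})$ is the one with center $b_0$ and diameter $\delta_s := \left\lvert b_N \right\rvert s^N$. (Both diameters are positive, confirming these are genuine type II or III points of $\mathbb{A}^{1,\text{an}}$, not points at infinity or type I points.)

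Finally I would substitute into the definition of $d_{\mathbb{H}}$. Because the two image points share the center $b_0$, the term $\left\lvert a - b \right\rvert$ in the formula vanishes, so $d_{\mathbb{H}}(\psi(\zeta_{a,r}), \psi(\zeta_{a,s})) = 2 \log \max\{\delta_r, \delta_s\} - \log \delta_r - \log \delta_s = \left\lvert \log \delta_r - \log \delta_s \right\rvert = \left\lvert N \right\rvert \cdot \left\lvert \log r - \log s \right\rvert$, while $d_{\mathbb{H}}(\zeta_{a,r}, \zeta_{a,s}) = \left\lvert \log r - \log s \right\rvert$ by the same formula (the case $r = s$ being vacuous since then the annulus is empty). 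Since $\left\lvert N \right\rvert = \max\{N, -N\}$, this is the desired identity. The only points requiring a little care are the justification that the Gauss-norm computations at the boundary radii $r$ and $s$ are legitimate — which is implicit in the inner and outer Weierstrass degrees being defined — and the bookkeeping that pins the center of $\psi(\zeta_{a,\rho})$ to $b_0$; I do not expect either to present a real obstacle, so the proof is essentially an assembly of standard facts about the action of analytic maps on type II and III points together with the Weierstrass degree hypothesis.
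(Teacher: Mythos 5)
Your proof is correct and follows essentially the same route as the paper's: both hinge on the observation that the Weierstrass degree hypothesis forces $\left\lvert \psi(z)-b_0 \right\rvert_{\zeta_{a,\rho}} = \left\lvert b_N \right\rvert \rho^N$ for $r \le \rho \le s$, so that $\psi(\zeta_{a,\rho}) = \zeta_{b_0,\left\lvert b_N \right\rvert \rho^N}$, and then one substitutes into the formula for $d_{\mathbb{H}}$. You simply fill in the details (identifying the center $b_0$ explicitly, handling the boundary radii by continuity) that the paper dispatches with ``the result follows immediately.''
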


\begin{proof}
Using the non-archimedean triangle inequality and the definition of Weierstrass degrees, we obtain that $\left\lvert \psi(z)-b_0 \right\rvert = \left\lvert b_N \right\rvert \left\lvert z \right\rvert^N$ for all $z$ in the annulus.  The result follows immediately.  This is also \cite{BR10}*{Theorem 9.46(C)} if $\psi$ is a rational function.
\end{proof}

\section{Dynamics on the Berkovich Projective Line} \label{sec:dynbpl}
This material can also be found in \cite{BR10} or M. Jonsson's notes in \cite{DFN15}.  For this subsection, let $f(z) \in \mathbb{K}(z)$ be a rational function.  Then, $f(z)$ naturally induces an analytic map from $\mathbb{P}^{1,\text{an}}_\mathbb{K}$ to itself: identifying $f(z)$ with a pair of homogeneous polynomials $[F(X,Y),G(X,Y)]$, we define
\[
\left\lvert P(X,Y) \right\rvert_{f(\xi)} := \left\lvert P\left(F(X,Y), G(X,Y) \right) \right\rvert_{\xi}
\]
for $\xi \in \mathbb{P}^{1,\text{an}}_\mathbb{K}$ and $P(X,Y) \in \mathbb{K}(X,Y)$ homogeneous.  We aim to study the iterates
\[
f^n := \underbrace{f \circ f \circ\ldots \circ f}_{n \text{ times}}
\]
of this map.  A point $\xi \in \mathbb{P}^{1,\text{an}}_\mathbb{K}$ is called a \textit{periodic point} for $f$ if $f^n(\xi) = \xi$ for some integer $n \geq 1$; its \textit{minimal period} is the smallest such $n$.  A \textit{fixed point} is a periodic point of minimal period $1$.  A type I fixed point $\xi$ is called \textit{attracting} if $\left\lvert f'(\xi) \right\rvert < 1$, \textit{indifferent} if $\left\lvert f'(\xi) \right\rvert = 1$, or \textit{repelling} if $\left\lvert f'(\xi) \right\rvert > 1$.  The value $\left\lvert f'(\xi) \right\rvert$ is called the \textit{multiplier} of the fixed point $\xi$.  A type II, III, or IV fixed point $\xi$ is called \textit{indifferent} if $f$ maps $\xi$ with multiplicity $1$ and is called \textit{repelling} otherwise.  The map $f$ is locally expanding near repelling fixed points, locally contracting near attracting fixed points, and neither contracting nor expanding near indifferent fixed points.  A periodic point $\xi$ of minimal period $n$ is called attracting, repelling or indifferent according to its classification as a fixed point of the rational function $f^n$.

The \textit{Julia set} $J(f)$ is a compact subset of $\mathbb{P}^{1,\text{an}}_\mathbb{K}$ that is totally invariant under $f$, consisting of the points at which the dynamics of $f$ are chaotic.  There are a variety of equivalent characterizations in the case where $\mathbb{K}$ is algebraically closed, e.g. it is the support of a canonical invariant measure and it is the complement of the domain of normality in an appropriate sense \cite{FKT12}.  It is also the topological closure of the set of all repelling periodic points, which will serve as a definition for our purposes.  In fact, it is the topological closure of the set of type I repelling periodic points if there is at least one type I repelling periodic point and $\text{char}(K) = 0$ by \cite{Bez01}*{Th\'{e}or\`{e}me 2}.  See Remark \ref{rmk:dense} for further discussion.  If $\mathbb{K}$ is not algebraically  closed, then one can view $f$ as a rational function $\widehat{f}$ with coordinates in $\widehat{\mathbb{K}^a}$ and define $J(f) \subseteq \mathbb{P}^{1,\text{an}}_\mathbb{K}$ to be the image of $J(\widehat{f})$ under the canonical projection $\pi: \mathbb{P}^{1,\text{an}}_{\widehat{\mathbb{K}^a}} \longrightarrow \mathbb{P}^{1,\text{an}}_\mathbb{K}$ described above.  One can check that $\text{deg}_\xi f = \text{deg}_{\widehat{\xi}} \widehat{f}$ and $\left\lvert f'(\pi(\xi)) \right\rvert = \left\lvert \widehat{f}'(\widehat{\xi}) \right\rvert$ for any fixed point $\widehat{\xi}$ of $\widehat{f}$.  Using this, it follows that the above definition of $J(f)$ coincides with the closure of the repelling periodic points of $f$ in $\mathbb{P}^{1,\text{an}}_\mathbb{K}$.

\section{Definitions and Preliminary Lemmas} \label{sec:deflem}
Our main object of study is an analytic family of rational maps.  For simplicity, we will focus only on families parametrized by an open connected subset of $\mathbb{A}^{1, \text{an}}$, although many definitions and proofs apply to families parametrized by arbitrary (good) Berkovich spaces.
\begin{defn}
Let $U \subseteq \mathbb{A}^{1, \text{an}}$ be a connected open set.  An \textit{analytic family of rational maps} parametrized by $U$ is an analytic map $\mathbb{P}^{1,\text{an}} \times U \longrightarrow \mathbb{P}^{1,\text{an}}$.  This is the same as a one-variable rational function with coefficients in $\mathcal{O}_U(U)$ such that after applying the canonical maps $\mathcal{O}_U(U) \longrightarrow \mathcal{H}(\lambda)$ for $\lambda \in U$, the induced rational functions with coefficients in $\mathcal{H}(\lambda)$ all have the same degree.  We will use the notation $\left\lbrace f_\lambda \right\rbrace_{\lambda \in U}$ to refer to the collection of rational functions with coefficients in $\mathcal{H}(\lambda)$, which we will view as analytic maps on the fibers
\[
f_\lambda: \mathbb{P}^{1,\text{an}}_\lambda \longrightarrow \mathbb{P}^{1,\text{an}}_\lambda.
\]
\end{defn}
Since each $f_\lambda$ is just a rational function defined over the complete (but not necessarily algebraically closed) field $\mathcal{H}(\lambda)$, the results from \S \ref{sec:dynbpl} apply.  We will write $J_\lambda$ for the Julia set of $f_\lambda$ in $\mathbb{P}^{1,\text{an}}_\lambda$ and write $f'_\lambda$ for the derivative with respect to the dynamical coordinate $z$.  In an abuse of notation, in order to preserve the terminology from \S \ref{sec:dynbpl} , we will call points in $\mathbb{P}^{1,\text{an}} \times U$ type I, II, III, or IV if they have this classification in their fiber $\mathbb{P}^{1, \text{an}}_\lambda$.  We will occasionally refer to this as the fiberwise type of a point when the danger of confusion is particularly acute.

\begin{defn}
Let $x \in U$ be given.  A type I periodic point $\xi \neq \infty_x$ of $f_x$ of minimal period $n$ will be called $\textit{unstably indifferent}$ if ``its multiplier is the Gauss point'' in the sense that $(f_\lambda^n)'(\xi)=\zeta_{0,1}$, viewing $(f_\lambda^n)'$ as a map from $\mathbb{P}^{1,\text{an}} \times U$ to $\mathbb{P}^{1, \text{an}}$.  If $\xi = \infty_x$, then it will be called unstably indifferent if it satisfies the above condition after a change coordinates.
\end{defn}
Since $\zeta_{0,1}$ is the unique boundary point of the Berkovich disc $\overline{D}(0,1)$, unstably indifferent periodic points signal a change of the classification (attracting, repelling, or indifferent) of a periodic point.

\begin{example}
Consider the family $f_\lambda(z)=z^2+\lambda z$.  Then, $f_\lambda'(z) = 2z+\lambda$, so $f_\lambda': \mathbb{P}^{1,\text{an}} \times U \longrightarrow \mathbb{P}^{1,\text{an}}$ sends the point $0 \in \mathbb{P}^{1,\text{an}}_{\zeta_{0,1}} \subseteq \mathbb{P}^{1,\text{an}} \times U$ to $\zeta_{0,1} \in \mathbb{P}^{1,\text{an}}$ and hence this fixed point is unstably indifferent.  A different parametrization of this family will be discussed further in Example \ref{ex:quadratics}.
\end{example}

An important tool in our study of the dynamics of a family of rational maps are the Berkovich $n$-period curves, which parametrize the points of $\left\lbrace f_\lambda \right\rbrace$ with minimal period dividing $n$.
\begin{defn}
Let $A$ be an arbitrary field.  Let $f \in A(z)$ and $n \in \mathbb{N}$ be given.  Represent $f^n$ by homogeneous polynomials $[F_n(X,Y), G_n(X,Y)]$ in $A[X,Y]$.  We define the \textit{$n$-period polynomial} for $f$ to be the homogeneous polynomial
\[
\Phi_{f,n}(X,Y) := Y F_n(X,Y)-X G_n(X,Y) \in 
A[X,Y].
\]
The \textit{Berkovich $n$-period curve} for $\left\lbrace f_\lambda \right\rbrace$, denoted by $\mathcal{C}_n(\left\lbrace f_\lambda \right\rbrace)$, is defined as the vanishing locus of $\Phi_{\left\lbrace f_\lambda \right\rbrace,n}(X,Y)$ in $\mathbb{P}^{1,\text{an}} \times U$,  viewing $\left\lbrace f_\lambda \right\rbrace$ as a rational function with coefficients in $\mathcal{O}_U(U)$ and taking the homogeneous coordinate view of $\mathbb{P}^1$.
\end{defn}

In order to apply Lemma \ref{lem:ift}, we will need the fact that the Berkovich $n$-period curves can be cut out locally by a monic polynomial:

\begin{lem} \label{lem:monic}
For any $\xi \in \mathcal{C} := \mathcal{C}_n(\left\lbrace f_\lambda \right\rbrace)$, there is an affinoid neighborhood $V := \mathcal{M}(\mathscr{A})$ of $x := \pi_2(\xi)$ in $U$ such that $\mathcal{C} \cap (\mathbb{A}^{1,\text{an}} \times V)$ is the vanishing locus of a monic polynomial in $\mathscr{A}[z]$ and $\xi \in \mathbb{A}^{1,\text{an}} \times V$, after a change of coordinates on $\mathbb{P}^{1,\text{an}} \times V$.
\end{lem}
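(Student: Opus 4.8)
The plan is to dehomogenize the period polynomial $\Phi_{\{f_\lambda\},n}$, identify its leading coefficient as a polynomial in $z$, and then choose a coordinate on $\mathbb{P}^{1,\text{an}}$ defined over $K$ that forces this leading coefficient to be a unit on a small affinoid neighborhood of $x$; dividing by it produces the desired monic polynomial.

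First I would fix notation. Write $\{f_\lambda\}$ in homogeneous coordinates as $[F,G]$ with $F,G\in\mathcal{O}_U(U)[X,Y]$ homogeneous of degree $d=\deg f_\lambda$ and fiberwise coprime (this is the constant-degree hypothesis, via the nonvanishing of the resultant), so that $f_\lambda^n=[F_n,G_n]$ with $F_n,G_n$ homogeneous of degree $d^n$ and fiberwise coprime. Then $\Phi:=\Phi_{\{f_\lambda\},n}(X,Y)=YF_n-XG_n$ is homogeneous of degree $N:=d^n+1$, and $\mathcal{C}\cap(\mathbb{A}^{1,\text{an}}\times U)$ is the zero set of the dehomogenization $\Phi(z,1)\in\mathcal{O}_U(U)[z]$. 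The coefficient of $z^N$ in $\Phi(z,1)$ is $\Phi(1,0)=-G_n(1,0)$, and since $f_\lambda^n(\infty_\lambda)=[F_{n,\lambda}(1,0):G_{n,\lambda}(1,0)]$ and $F_{n,\lambda},G_{n,\lambda}$ have no common zero, this coefficient vanishes at $\lambda$ exactly when $\infty_\lambda$ is a periodic point of $f_\lambda$ of period dividing $n$. So the key point is to move a point that is not periodic for $f_x$ to infinity.

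Next I would choose the coordinate change. The function $f_x$ over $\mathcal{H}(x)\supseteq K$ has $\Phi_{f_x,n}\not\equiv 0$ (since $f_x^n\neq\text{id}$) of degree $N$, hence only finitely many periodic points of period dividing $n$, so only finitely many of those lie in $\mathbb{P}^1(K)\subseteq\mathbb{P}^{1,\text{an}}_x$. Because $K$ is infinite, I can pick $p\in\mathbb{P}^1(K)$ avoiding this finite set and, in the case that $\xi$ is of type I, also avoiding $\pi_1(\xi)$ (when $\xi$ is not of type I, the non-classical point $\pi_1(\xi)$ is automatically distinct from $p$). Let $\gamma\in\mathrm{PGL}_2(K)$ send $p$ to $\infty$, acting on $\mathbb{P}^{1,\text{an}}\times U$ through the first factor. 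Conjugation by $\gamma$ replaces $\{f_\lambda\}$ by the analytic family $\{\gamma f_\lambda\gamma^{-1}\}$ of the same degree $d$, replaces $\mathcal{C}$ by $\gamma(\mathcal{C})=\mathcal{C}_n(\{\gamma f_\lambda\gamma^{-1}\})$, and replaces $\xi$ by $\gamma(\xi)$; by the choice of $p$, now $\infty_x$ is not periodic of period dividing $n$ for $\gamma f_x\gamma^{-1}$, and $\pi_1(\gamma(\xi))\neq\infty$.

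Finally I would shrink the base. The leading coefficient $g(\lambda):=\Phi_{\{\gamma f_\lambda\gamma^{-1}\},n}(1,0)\in\mathcal{O}_U(U)$ now satisfies $|g(x)|\neq 0$, so $\{|g|>0\}$ is an open neighborhood of $x$; since affinoid neighborhoods form a basis, I pick an affinoid $V=\mathcal{M}(\mathscr{A})$ with $x\in V\subseteq\{|g|>0\}$. Then $g|_V$ is a nowhere-vanishing element of the $K$-affinoid algebra $\mathscr{A}$, hence a unit, and $(g|_V)^{-1}\Phi_{\{\gamma f_\lambda\gamma^{-1}\},n}(z,1)\in\mathscr{A}[z]$ is monic of degree $N$ with the same zero set as $\Phi_{\{\gamma f_\lambda\gamma^{-1}\},n}(z,1)$, namely $\gamma(\mathcal{C})\cap(\mathbb{A}^{1,\text{an}}\times V)$; together with $\pi_2(\gamma(\xi))=x\in V$ and $\pi_1(\gamma(\xi))\neq\infty$, this is the assertion. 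I expect the only delicate point to be the coordinate-change step: one wants $p$ classical, so that $\gamma$ is defined over $K$ and the relevant analytic functions restrict unproblematically to $V$, while simultaneously avoiding the finitely many periodic points and, when $\xi$ is of type I, the point $\pi_1(\xi)$ — which is exactly what makes the case distinction on the fiberwise type of $\xi$ necessary. The passage from "$g(x)\neq 0$" to "$g|_V$ a unit" is routine, using continuity of $\lambda\mapsto|g(\lambda)|$ and the fact that a nowhere-vanishing function on a $K$-affinoid is invertible.
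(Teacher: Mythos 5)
Your proposal is correct and follows essentially the same route as the paper's proof: move a classical non-periodic point to $\infty$ by a $\mathrm{PGL}_2(K)$ change of coordinates, observe that the leading coefficient $a_0(x)=\Phi_{f_x,n}(1,0)$ is then nonzero, shrink to an affinoid $V$ on which $a_0$ is a unit, and divide. One small imprecision: your parenthetical claim that $\pi_1(\xi)$ is automatically non-classical whenever $\xi$ is not a type I point of the two-dimensional space can fail (a non-type-I $\xi\in\mathcal{C}$ may well have $\pi_1(\xi)\in\mathbb{P}^1(K)$); this is harmless, since whenever $\pi_1(\xi)$ is classical it is necessarily a period-$\le n$ point of $f_x$ in $\mathbb{P}^1(K)$ and hence already excluded from the choice of $p$ --- which is the simpler observation the paper relies on, making the case distinction on the type of $\xi$ unnecessary.
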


\begin{proof}
There are only finitely many points of period dividing $n$ for $f_x$, so we can change variables so that none of them are $\infty_x$. Then, writing
\[
\Phi_{\left\lbrace f_\lambda \right\rbrace, n} (X,Y) = a_0(\lambda) X^N +a_1(\lambda)X^{N-1}Y+\ldots+ a_N(\lambda) Y^N
\]
with all $a_i \in \mathcal{O}_U(U)$ and no common roots in $U$, we have that $a_0(x) \neq 0$.  By continuity  of $a_0(\lambda)$, the subset $\left\lbrace y \in U \mid a_0(y) \neq 0 \right\rbrace$ is open and hence it contains an affinoid subdomain $V := \mathcal{M}(\mathscr{A})$ which also contains $x$.  Now viewing the $a_i \in \mathscr{A}$, we observe that $a_0$ is invertible in $\mathscr{A}$, and hence we can take
\[
g(z) = z^n+\frac{a_1}{a_0}z^{n-1}+\ldots \frac{a_N}{a_0} \in \mathscr{A}[z]
\]
as the desired monic polynomial.
\end{proof}

\begin{defn}
The \textit{multiplicity} of a type I periodic point $\xi$ of minimal period $n$ is its multiplicity under the map
\[
\pi_2: \mathcal{C}_n(\left\lbrace f_\lambda \right\rbrace) \longrightarrow U.
\]
\end{defn}

As in the complex case, type I repelling periodic points at type I parameters always have multiplicity $1$.  However, there is a new bifurcation possible in non-archimedean dynamics: type I repelling periodic points can collide at a non-classical parameter.  Moreover, this collision can be detected intrinsically in the fiber $\mathbb{P}^{1,\text{an}}_\lambda$ without any information about the family $\left\lbrace f_\lambda \right\rbrace$.  These results are described more precisely in the following lemma.

\begin{lem} \label{lem:ppcol}
Let $x \in U$ and let $\xi$ be a type I repelling periodic point of $f_x$ of minimal period $n$.  View $\xi$ as an element of $\mathcal{C} := \mathcal{C}_n(\left\lbrace f_\lambda \right\rbrace)$ and denote by $i_x(\xi)$ the canonical inclusion of $\xi$ in $\mathbb{P}^{1,\text{an}}_x$.  Denote by $m$ the multiplicity of $\xi$ as a periodic point.  Then,
\begin{enumerate}[$(1)$]
\item $m = [ \mathcal{H}(i_x(\xi)) : \mathcal{H}(x) ]$
\item $m = 1$ if $x$ is a type I point.
\end{enumerate}
\end{lem}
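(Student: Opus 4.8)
The strategy is to work locally near $\xi$ on the Berkovich $n$-period curve $\mathcal{C}$ and compare the two multiplicity notions — the algebraic multiplicity $m_{\pi_2}(\xi)$ defined via $\dim_{\kappa(x)}\bigl(\mathcal{O}_{\mathcal{C},\xi}/\mathfrak{m}_{U,x}\mathcal{O}_{\mathcal{C},\xi}\bigr)$ and the field degree $[\mathcal{H}(i_x(\xi)):\mathcal{H}(x)]$. First I would invoke Lemma \ref{lem:monic} to pass to an affinoid neighborhood $V = \mathcal{M}(\mathscr{A})$ of $x$ in $U$ over which $\mathcal{C}\cap(\mathbb{A}^{1,\text{an}}\times V) = \mathcal{M}(\mathscr{B})$ with $\mathscr{B} = \mathscr{A}[z]/(g)$ for a monic $g \in \mathscr{A}[z]$, and $\xi$ away from infinity; this makes $\mathscr{B}$ a finite free $\mathscr{A}$-module, so $\pi_2\restrict{\mathcal{C}}$ is finite and the multiplicity at $\xi$ is the local dimension above. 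Then the first task is to identify $\mathcal{H}(i_x(\xi))$: the point $i_x(\xi)$ lives in the fiber $\mathbb{P}^{1,\text{an}}_x = \mathbb{P}^{1,\text{an}}_{\mathcal{H}(x)}$, and its complete residue field is the completion of $\mathcal{H}(x)(z)/(\overline{g_x})$ where $\overline{g_x}$ is the image of $g$ in $\mathcal{H}(x)[z]$; since $\xi$ is a type I point of $\mathbb{P}^{1,\text{an}}_x$, this is a finite extension of $\mathcal{H}(x)$, and its degree is the multiplicity of the irreducible factor of $\overline{g_x}$ vanishing at $\xi$.

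For part (1), the key point is a fiber-dimension computation: $\mathcal{O}_{\mathcal{C},\xi}/\mathfrak{m}_{U,x}\mathcal{O}_{\mathcal{C},\xi}$ is the local ring of the fiber $\pi_2^{-1}(x)$ at $\xi$, which is the localization of $\mathcal{H}(x)[z]/(\overline{g_x})$ at the maximal ideal corresponding to $\xi$; its $\kappa(x)$-dimension is exactly the multiplicity of that factor, i.e. $[\kappa(i_x(\xi)):\kappa(x)]$, and passing to completions (using that all these field extensions are finite, so completion does not change the degree) gives $m = [\mathcal{H}(i_x(\xi)):\mathcal{H}(x)]$. One subtlety I need to handle carefully: the multiplicity in the Lemma is defined with respect to $\pi_2 : \mathcal{C}_n \longrightarrow U$, so I must make sure the local-ring fiber computation is compatible with the finite-morphism multiplicity $m_\phi(y)$ from \S\ref{ssec:gentheory}, which follows since $\mathscr{B}$ is finite over $\mathscr{A}$ and $\mathcal{O}_{\mathcal{C},\xi}$ is a localization of $\mathscr{B}$.

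For part (2), suppose $x$ is a type I point, so $\mathcal{H}(x) = K$ is algebraically closed and complete. Then any finite extension of $\mathcal{H}(x)$ equals $\mathcal{H}(x)$ itself, so by part (1) it would follow immediately that $m = 1$ — \emph{provided} I know that the relevant extension is separable or that the residue field $\kappa(i_x(\xi))$ is actually a field extension (not just an Artinian ring), i.e. that the factor of $\overline{g_x}$ at $\xi$ is reduced. This is where Proposition \ref{prop:weaklystab} enters: the worry is that $\overline{g_x}$ might have a repeated root at the type I point $\xi$ even though $x$ is classical. To rule this out I would use that $\xi$ is a \emph{repelling} periodic point: repelling type I periodic points are simple roots of the period polynomial $\Phi_{f_x,n}$ because the derivative condition $|(f_x^n)'(\xi)| > 1$ forces $\Phi_{f_x,n}$ to have a nonzero derivative at $\xi$ (the standard computation $\frac{d}{dz}\bigl(f_x^n(z)-z\bigr)\big|_{\xi} = (f_x^n)'(\xi) - 1 \neq 0$ since $|(f_x^n)'(\xi)| > 1 \neq |1|$ — wait, I must be careful: this shows the value is nonzero in $K$, since $(f_x^n)'(\xi)-1$ has absolute value $|(f_x^n)'(\xi)| > 1$, hence is nonzero). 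So $\xi$ is a simple root, the local factor of $\overline{g_x}$ is separable of degree $1$, and Proposition \ref{prop:weaklystab} applied to $\kappa = \mathcal{H}(x)$ (quasicomplete) confirms $[\mathcal{H}(i_x(\xi)):\mathcal{H}(x)] = [\kappa(i_x(\xi)):\kappa(x)] = 1$. The main obstacle is the bookkeeping in part (1) — correctly matching the scheme-theoretic fiber multiplicity, the Weierstrass/monic-polynomial factorization, and the completed residue field degree — rather than any deep input; part (2) is then a short consequence once the separability at repelling points is observed.
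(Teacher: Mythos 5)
Your plan follows essentially the same route as the paper's proof: pass to an affinoid neighborhood via Lemma \ref{lem:monic} to write $\mathcal{C}$ locally as $\mathcal{M}(\mathscr{A}[z]/(g))$, compute $\mathcal{O}_{\mathcal{C},\xi}/\mathfrak{m}_{U,x}\mathcal{O}_{\mathcal{C},\xi}$ as a localization of $\kappa(x)[z]/(\Phi_{f_x,n})$, relate its $\kappa(x)$-dimension to the degree of the irreducible factor $P$ of $\Phi_{f_x,n}$ at $\xi$, pass to completions via Proposition \ref{prop:weaklystab}, and use the repelling hypothesis through a derivative computation for $\Phi_{f_x,n}$ to rule out multiple roots. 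These are all the right ingredients; the issue is where you deploy them.

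The gap is in your part (1). The local ring of the fiber at $\xi$ has $\kappa(x)$-dimension $k \cdot \deg(P)$, where $k$ is the multiplicity with which $P$ divides $\Phi_{f_x,n}$ over $\kappa(x)$ and $\deg(P) = [\kappa(i_x(\xi)):\kappa(x)]$; your sentence ``its $\kappa(x)$-dimension is exactly the multiplicity of that factor, i.e. $[\kappa(i_x(\xi)):\kappa(x)]$'' silently collapses $k \cdot \deg(P)$ to $\deg(P)$ without establishing $k=1$. You only notice this problem when you reach part (2), but it is needed already for the formula $m = [\mathcal{H}(i_x(\xi)):\mathcal{H}(x)]$ at an arbitrary (non-type-I) parameter $x$. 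The fix is precisely the derivative computation you propose in part (2) — the identity $\Phi_{f_x,n}'(z) = ((f_x^n)'(z)-1)G_n(z) + (f_x^n(z)-z)G_n'(z)$, or equivalently your $\frac{d}{dz}(f_x^n(z)-z)$, shows that any multiple root of $\Phi_{f_x,n}$ over the algebraic closure of $\kappa(x)$ has multiplier a root of unity, which is impossible for a repelling point — and it should be carried out in part (1) for general $x$, not just when $\mathcal{H}(x)=K$. This same observation also delivers the separability you need: if the roots of $P$ over $\overline{\kappa(x)}$ are simple, then $P$ is a separable polynomial, and Proposition \ref{prop:weaklystab} then justifies your otherwise-unsupported assertion that ``completion does not change the degree.'' Once part (1) is proved in this generality, part (2) reduces to the one-line observation that $\mathcal{H}(x) \cong K$ is algebraically closed when $x$ is type I, so the degree on the right of (1) is forced to be $1$; neither the simple-root argument nor Proposition \ref{prop:weaklystab} needs to be re-invoked there.
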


\begin{proof}
Change variables so that $\infty_x$ is not a periodic point of period dividing $n$ for $f_x$.  As in the proof of Lemma \ref{lem:monic}, choose an affinoid neighborhood $V := \mathcal{M}(\mathscr{A})$ of $x$ in $U$ such that there is an isomorphism of Berkovich spaces $\mathcal{C} \cap \left( \mathbb{A}^{1,\text{an}} \times V \right) \cong \mathcal{M}(\mathscr{B})$, where $\mathscr{B} = \mathscr{A}[z]/(h(z))$
and $h(z)$ is obtained from the polynomial $\Phi_{\left\lbrace f_\lambda \right\rbrace,n}(z) := \Phi_{\left\lbrace f_\lambda \right\rbrace,n}(z,1) \in \mathcal{O}_U(U)$ by applying the natural map $\mathcal{O}_U(U) \longrightarrow \mathscr{A}$ to the coefficients.  Thus,
\[
\mathcal{O}_{\mathcal{C},\xi} / \mathfrak{m}_{U,x} \mathcal{O}_{\mathcal{C},\xi} \cong \big(\kappa(x)[z]/ (\Phi_{f_x,n}(z)) \big)_{(P(z))}, 
\]
for some irreducible factor $P(z)$ of $\Phi_{f_x,n}(z)$ in $\kappa(x)[z]$.  Recalling that
\[
m = \text{dim}_{\kappa(x)} \left( \mathcal{O}_{\mathcal{C},\xi} / \mathfrak{m}_{U,x} \mathcal{O}_{\mathcal{C},\xi} \right),
\]
this shows that $m = k \cdot \text{deg}(P)$, where $k$ is the number of times that $P$ appears as a factor of $\Phi_{f_x,n}(z)$.  Noting that
\[
\Phi_{f_x,n}'(z) = \left( (f_x^n)'(z)-1 \right)G_n(z)+\left( f_x^n(z)-z \right) G_n'(z),
\]
we see that the multiplier of a multiple root of $\Phi_{f_x,n}(z)$ must be a root of unity and therefore can only occur at an indifferent periodic point.  Thus, $k =1$ and $\kappa_\mathcal{C}(\xi) / \kappa(x)$ is a separable field extension.  We compute
\[
m = \text{deg}(P) = [\kappa_{\mathcal{C}}(\xi): \kappa(x) ] = [ \mathcal{H}_{\mathcal{C}}(\xi) :  \mathcal{H}(x) ],
\]
using Propositon \ref{prop:weaklystab} for the last equality.  In particular, $P(z)$ remains irreducible in $\mathcal{H}(x)[z]$, which implies that $i_x(\xi)$ is the unique point of $\mathbb{P}^{1,\text{an}}_x$ that satisfies $\left\lvert P(z) \right\rvert_{i_x(\xi)} = 0$.  Therefore, $m = [\mathcal{H}(i_x(\xi)) : \mathcal{H}(x) ]$, proving (1).

For (2), we observe that $\mathcal{H}(x) \cong K$ is algebraically closed for a type I parameter $x$.
\end{proof}

\begin{defn}
Let $\left\lbrace f_\lambda \right\rbrace$ be an analytic family of rational maps parametrized by a connected open set $U \subseteq \mathbb{A}^{1,\text{an}}$.  Assume that the Julia set $J_{x_0}$ contains a type I point for some $x_0 \in U$.  We say that the Julia sets $J_\lambda$ \textit{move analytically} on $U$ if there is a family of homeomorphisms
\[
\phi_\lambda: J_{x_0} \longrightarrow J_{\lambda}
\]
such that
\begin{enumerate}
\item $\phi_{x_0} = \text{id}$
\item the map $\lambda \mapsto \phi_\lambda(\xi)$ from $U$ to $\mathbb{P}^{1,\text{an}} \times U$ is analytic for each type I point $\xi \in J_{x_0}$
\item $f_\lambda \circ \phi_\lambda = \phi_\lambda \circ f_{x_0}$ for all $\lambda \in U$.
\end{enumerate}
\end{defn}

\begin{rmk} \label{rmk:dense}
Note that this definition is only sensible if the type I points are dense in $J_x$, because otherwise there are no conditions on how the non-type I points of the Julia set move.  If $J_x$ has any type I points, then they form a dense subset, because the backward orbit of any point in $J_x$ is dense \cite{BR10}*{Corollary 10.58}.  By a Theorem of B\'{e}zivin \cite{Bez01}*{Th\'{e}or\`{e}me 2}, if $\text{char}(K)=0$ and there is at least one type I repelling periodic point in $J_x$, then the type I repelling periodic points are dense in $J_x$.  It is conjectured that the same is true as long as $\text{char}(K)=0$ and $J_x$ has any type I points.  The characteristic $p$ case is more subtle.
\end{rmk}

\begin{rmk}
An analytic motion of Julia sets does not necessarily preserve types of points, although by definition type I points will always be kept type I by the motion.  In \cite{Ben02}, R. Benedetto provides an example of a one-parameter family of polynomials and a region in parameter space which exhibits wandering domains for a dense subset of type I parameters in this region and no wandering domains for a different dense subset of type I parameters in this region.  If there is a subset of this region where the Julia sets move analytically, then the analytic motion would change points from type II or III to type IV.  In complex dynamics, the $J$-stable parameters are dense, so it is likely that such a subset exists.
\end{rmk}

To conclude this section, we provide an example to help illustrate these definitions and Theorem \ref{thm:main}.

\begin{example} \label{ex:quadratics}
Consider the family of quadratic polynomials $z^2+\lambda$ with $\lambda \in \mathbb{A}^{1,\text{an}}$.  For simplicity, we will assume that $K$ does not have residue characteristic $2$.  In complex dynamics, the analogous family exhibits extremely intricate behavior and has been the subject of extensive study.  In non-archimedean dynamics, the behavior of this family is much easier to understand.  For $\left\lvert \lambda \right\rvert \leq 1$, the Julia set $J_\lambda$ consists of a single type II repelling fixed point at the Gauss point $\zeta_{0,1}$.  For $\lambda \in \mathbb{A}^{1,\text{an}}$ satisfying $\left\lvert \lambda \right\rvert > 1$ and $\text{diam}(\lambda) < \left\lvert \lambda \right\rvert$, the Julia set $J_\lambda$ consists only of type I points and is homeomorphic to a Cantor set.  In fact, $J_\lambda$ is homeomorphic to the space of sequences on two letters $\left\lbrace 0, 1 \right\rbrace^{\mathbb{N}}$ and under this homeomorphism $z^2+\lambda$ acts as the left shift map.  

Thus, there is a dramatic bifurcation at $\lambda = \zeta_{0,1}$.  Indeed, every type I periodic point is unstably indifferent at this parameter.  There are also more subtle bifurcations at every point along the path in parameter space $\left\lbrace  \lambda \in \mathbb{A}^{1,\text{an}} \mid \text{diam}(\lambda) =\left\lvert \lambda \right\rvert  > 1 \right\rbrace$ between $\zeta_{0,1}$ and $\infty$ noninclusive.  At these parameter values, every type I periodic point is repelling and has multiplicity $2$.  The Julia set can be identified with a quotient of the space of sequences on two letters by the action of switching the two letters.  These bifurcations are obstructions to an analytic motion of Julia sets on the open set $\left\lbrace \left\lvert \lambda \right\rvert > 1 \right\rbrace$, even though the dynamical behavior is the same away from this path.  One way to think about this is that for $x \in \mathbb{A}^{1,\text{an}}$ satisfying $\left\lvert x \right\rvert > 1$ and $\text{diam}(x) < \left\lvert x \right\rvert$, viewing $J_x$ and $J_{-x}$ as spaces of sequences on two letters, there is no canonical way to identify the two letters for $J_x$ with the two letters for $J_{-x}$.  Note that this phenomenon also occurs in the complex case but does not prevent the Julia set from moving holomorphically on the complement of the Mandelbrot set.  Moving holomorphically along two non-homotopic paths from $x$ to $-x$, which is possible since the complement of the Mandelbrot set is not simply connected, will give the two different identifications of the letters for the Julia sets.  In the non-archimedean case, every connected subset of $\mathbb{A}^{1,\text{an}}$ is simply connected, so this cannot occur.

In this example, one can verify directly that the two conditions in Theorem \ref{thm:main} are equivalent on every connected open set $U \subseteq \mathbb{A}^{1,\text{an}}$.  The main idea is that for any $\left\lvert x \right\rvert > 1$, two analytic inverses of $f_{\lambda}$ defined near $0$ can be constructed with coefficients in $\bigcap_{r < \left\lvert x \right\rvert} K \left\lbrace r^{-1} (\lambda-x) \right\rbrace$, using the fact that $\lambda$ has a square root in $\bigcap_{r < \left\lvert x \right\rvert} K \left\lbrace r^{-1} (\lambda-x) \right\rbrace$.  Applying infinite sequences of these inverses allows one to construct an analytic motion of the Julia sets on $D(x, \left\lvert x \right\rvert)$, and a similar argument can be used to show that the type I repelling periodic points have multiplicity $2$ along the segment from $\zeta_{0,1}$ to $\infty$.
\end{example}

\section{Main Results}
\subsection{The Non-archimedean $\lambda$-Lemma} \label{ssec:lamlemma}
\lamlemma*

\begin{note}
We will sometimes to a family of injections satisfying the hypotheses of the Non-archimedean $\lambda$-Lemma as an analytic motion.
\end{note}

The full proof of the Non-archimedean $\lambda$-Lemma is very technical and unnecessary to understanding the rest of the paper, so we have isolated it and the required lemmas to this subsection.  On the other hand, the proof that the motion can be extended to type I points of $\overline{E}$ in the case that $U$ is a disc is quite simple.  We give a brief sketch of the argument here:

\begin{proof}[Proof sketch of the disc case]
After a change of variables analytic in $\lambda$, we may assume that $\infty \in E$ and $\infty$ is fixed by the motion.  Denote $\psi_{\alpha}(\lambda) := (\pi_1 \circ \phi_\lambda)(\alpha)$ for $\alpha \in E \setminus \left\lbrace \infty \right\rbrace$ and $\lambda \in U$.  Then, each $\psi_{\alpha}$ must avoid $\infty$ by the injectivity of the $\phi_{\lambda}$ and hence is an analytic function from $U$ to $\mathbb{A}^{1,\text{an}}$.  Moreover, again using the injectivity of the $\phi_{\lambda}$, we have that $\psi_{\alpha}-\psi_{\beta}$ must avoid $0$ for each distinct $\alpha,\beta \in E \setminus \left\lbrace \infty \right\rbrace$.  Since $\psi_{\alpha}-\psi_{\beta}$ is analytic and $U$ is a disc, we have that $(\psi_{\alpha}-\psi_{\beta})(U)$ is a disc as well.  This disc avoids $0$, so $\left\lvert \psi_{\alpha}-\psi_{\beta} \right\rvert$ is constant on $U$.  We conclude that 
\[
\left\lvert \phi_{\lambda}(\alpha)-\phi_{\lambda}(\beta) \right\rvert_{\lambda} = \left\lvert (\psi_{\alpha}-\psi_{\beta})(\lambda) \right\rvert = \left\lvert (\psi_{\alpha}-\psi_{\beta})(x_0) \right\rvert = \left\lvert \alpha-\beta \right\rvert_{x_0}
\]
for all $\lambda \in U$ and distinct $\alpha, \beta \in E \setminus \left\lbrace \infty \right\rbrace$.  This shows that each $\phi_\lambda$ is uniformly continuous viewing  $E \setminus \left\lbrace \infty \right\rbrace \subseteq \mathcal{H}(x_0)$ and hence can be extended continuously to $\overline{E} \cap \mathcal{H}(x_0)$.
\end{proof}

The first lemma we will need for the full proof is a consequence of the $S$-unit equation over function fields.

\begin{thm}[\cite{Ros02}*{Theorem 7.19}] \label{thm:sunit}
Let $L$ be a function field with a perfect constant field $F$.  Let $S$ be a finite set of primes of $L$. Then, there are only finitely many pairs
of separable, non-constant $S$-units $(u, v)$ such that $u + v = 1$. ($u$ is said to be separable if the field extension $L / F (u)$ is separable). If the characteristic of $F$ is zero, then every solution is separable. If the characteristic of $F$ is $p > 0$, then the most general solution to $X + Y = 1$ in non-constant $S$-units is $(u^{p^m} , v^{p^m} )$ where $(u, v)$ is a separable, non-constant solution in $S$-units and $m \in \mathbb{Z}$, $m \geq 0$.
\end{thm}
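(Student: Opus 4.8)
The plan is to recognize this as the classical finiteness theorem for the $S$-unit equation over a function field and to prove it by combining the Mason--Stothers ($abc$) inequality for function fields, in the form carrying a genus correction, with a rigidity argument that rescues finiteness despite $F$ being infinite. First I would fix notation: let $C$ be the smooth projective curve over $F$ with function field $L$, let $g$ be its genus, and put $s := \# S$; recall that a non-constant $S$-unit is precisely a non-constant $u \in L^\times$ whose zeros and poles all lie at places of $S$. The proof then breaks into three steps --- reduce to separable solutions, bound the degree of a separable solution, and deduce finiteness from that bound.

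For the reduction, in characteristic $p$ I would argue that if $u \in L^p$ one writes $u = u_0^{p^m}$ with $m$ maximal, so that $u_0 \notin L^p$ and hence, using that $F$ is perfect, $L/F(u_0)$ is separable; since $v = 1 - u = 1 - u_0^{p^m} = (1 - u_0)^{p^m}$, the pair $(u_0, 1 - u_0)$ is again a non-constant $S$-unit solution and $(u,v) = (u_0^{p^m}, (1-u_0)^{p^m})$. Thus every solution is a common $p^m$-th power of a separable one (with $m = 0$ automatically in characteristic $0$), and it suffices to bound the separable solutions. For the degree bound, given a separable solution $(u,v)$ I would set $n := [L : F(u)]$, which equals the degree of the polar divisor of $u$ and of the separable cover $u \colon C \to \mathbb{P}^1_F$. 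Its fibres over $0$, $1$, and $\infty$ are, respectively, the zeros of $u$, the zeros of $v$, and the common poles of $u$ and $v$; these are pairwise disjoint --- a zero is not a pole, and $u(P) = v(P) = 0$ would force $0 = 1$ --- and all lie in $S$. Base changing to $\bar F$ is harmless since $F$ is perfect (the constant extension is separable and $g$ is unchanged), and afterwards all residue degrees are trivial, so Riemann--Hurwitz gives
\[
2g - 2 \;=\; -2n + \deg\mathfrak{d} \;\ge\; -2n + \!\!\sum_{u(P) \in \{0,1,\infty\}}\!\! (e_P - 1) \;=\; n - \#\{P : u(P) \in \{0,1,\infty\}\} \;\ge\; n - s',
\]
where $s'$ is the number of places lying over $S$; hence $n \le s' + 2g - 2$. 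Over $\mathbb{P}^1$ this chain is exactly Mason--Stothers applied to $u + v = 1$.

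For finiteness, I would observe that the zero divisor $(u)_0$ and the polar divisor $(u)_\infty$ are effective, supported on $S$, of degree $n \le s' + 2g - 2$, so there are only finitely many possibilities for each, and likewise for $v$; I would then partition the separable solutions according to the pair $(\operatorname{div} u, \operatorname{div} v)$, of which there are finitely many. Within a fixed class, $u = \alpha u_1$ and $v = \beta v_1$ for fixed $u_1, v_1 \in L^\times$ and scalars $\alpha, \beta \in F^\times$, because two functions on $C$ with the same divisor differ by a constant ($F$ being the exact constant field); and if $\alpha u_1 + \beta v_1 = 1 = \alpha' u_1 + \beta' v_1$ then $(\alpha - \alpha') u_1 = (\beta' - \beta) v_1$, which forces $\alpha = \alpha'$ and $\beta = \beta'$ since $u_1/v_1$ is non-constant (otherwise $u$ and $v$ would be constant). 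Hence each class contains at most one solution, so there are finitely many in all; together with the reduction step this yields the asserted description of every solution.

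The step I expect to be the real obstacle is the last one, and it is exactly where the function field problem parts ways with its number field analogue: a bound on the degree (height) of $u$ does not by itself give finiteness, because an infinite constant field contains infinitely many $S$-units of any bounded degree --- finiteness is recovered only through the rigidity that fixing the divisors of both $u$ and $1 - u$ pins the solution down up to a single $F$-linear relation, which admits no continuous family. A secondary technical nuisance is running Riemann--Hurwitz cleanly in the presence of wild ramification and nontrivial residue degrees, which I would sidestep by the base change to $\bar F$ and by using only the inequality $\deg \mathfrak{d} \ge \sum_P (e_P - 1)$.
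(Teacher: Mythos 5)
The paper does not prove this statement; it is cited verbatim as Theorem~7.19 of Rosen's \emph{Number Theory in Function Fields}, so there is no internal argument to compare against. Your proposal is a correct, self-contained reconstruction of the standard Mason--Stothers proof, which is essentially what Rosen does. The three-step skeleton is sound: the reduction to separable solutions via a Frobenius decomposition $u = u_0^{p^m}$ (using that, for perfect $F$, the condition $u_0 \notin L^p$ is equivalent to $L/F(u_0)$ being separable, and that $(1-u_0)^{p^m} = 1 - u_0^{p^m}$ propagates both the $S$-unit property and separability to $v_0 = 1 - u_0$); the degree bound $n = [L:F(u)] \le s' + 2g - 2$ via Riemann--Hurwitz after the harmless constant-field extension to $\bar F$; and the rigidity step, which you correctly flag as the real crux, since a degree bound alone is vacuous over an infinite constant field and finiteness comes only because fixing $(\operatorname{div} u, \operatorname{div} v)$ leaves room for at most one solution of the non-degenerate linear condition $\alpha u_1 + \beta v_1 = 1$. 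Two small points worth spelling out in a full write-up: the existence of a maximal $m$ with $u \in L^{p^m}$ rests on the fact that $\bigcap_{m\ge 0} L^{p^m} = F$ for a function field over a perfect exact constant field, so a non-constant $u$ lies in only finitely many $L^{p^m}$; and the inequality $\deg\mathfrak{d} \ge \sum_P(e_P - 1)$ holds with no tameness assumption because each local different exponent satisfies $d_P \ge e_P - 1$ in general, which is precisely why wild ramification is handled for free.
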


\begin{lem} \label{lem:sunitdiff}
Let $L$ be a function field with perfect constant field $F$.  Let $S$ be a finite set of primes of $L$.  Suppose $A \subseteq L$ is a set of $S$-units such that $u-v$ is also an $S$-unit for all distinct $u,v \in A$.  Let $u_0 \in A$ be given, and denote $B := \left\lbrace u/u_0 \mid u \in A \right\rbrace$.  Then, either $B$ is a finite set or $B \subseteq F$.  In particular, $sup_{u \in A} \text{deg}(u) < \infty$.
\end{lem}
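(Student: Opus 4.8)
The plan is to reduce the statement to the $S$-unit equation $X+Y=1$, exploiting that the hypotheses on $A$ are invariant under dividing by a fixed element. First I would pass from $A$ to $B$ directly: since $u_0 \in A$ is an $S$-unit, every $u/u_0 \in B$ is an $S$-unit, every difference $(u-v)/u_0$ (for $u \neq v$ in $A$) is an $S$-unit, and $1 = u_0/u_0 \in B$; moreover $\deg(u) \le \deg(u_0) + \deg(u/u_0)$, so a uniform degree bound on $B$ gives one on $A$. Thus it suffices to prove, for any set $B$ of $S$-units containing $1$ with all pairwise differences $S$-units, that $B$ is finite or $B \subseteq F$ (the cases $\lvert A\rvert \le 1$ being trivial). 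Now for each $b \in B$ with $b \neq 1$ the pair $(b, 1-b)$ solves $X+Y=1$ in $S$-units, since $b$ is an $S$-unit and $1-b = -(b-1)$ is an $S$-unit by the difference hypothesis. The core claim I would isolate is that $B_{\mathrm{nc}} := B \setminus F$ is finite.

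In characteristic $0$ this is immediate from Theorem \ref{thm:sunit}: there are only finitely many non-constant solutions of $X+Y=1$ in $S$-units, and $b \mapsto (b,1-b)$ embeds $B_{\mathrm{nc}}$ into that set. The characteristic-$p$ case is where the real work lies and is the main obstacle, because Theorem \ref{thm:sunit} now only tells us that each non-constant solution is a Frobenius twist $(u_i^{p^m}, v_i^{p^m})$ of one of finitely many separable solutions $(u_1,v_1),\dots,(u_k,v_k)$; in particular every $b \in B_{\mathrm{nc}}$ is some $u_i^{p^m}$, but with $m$ a priori unbounded, so a naive count does not close $B_{\mathrm{nc}}$ off. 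Here I would bring in the finiteness of $S$ together with the difference condition. If $B_{\mathrm{nc}}$ were infinite, pigeonhole gives a fixed index $i$ and infinitely many exponents $m$ with $u_i^{p^m} \in B$; setting $w := u_i^{p^{m_0}}$ for the least such $m_0$ (a non-constant $S$-unit lying in $B$), the difference $u_i^{p^m} - w = w^{p^d} - w$ with $d := m - m_0$ would be an $S$-unit for arbitrarily large $d$. But over the prime field $X^{p^d}-X = \prod_j P_j(X)$, where the $P_j$ are its distinct monic irreducible factors, whose number grows without bound as $d \to \infty$; since $w$ is non-constant, each $P_j(w)$ is a non-constant element of $L$, hence has a zero, and these zeros are pairwise disjoint because distinct $P_j$ are coprime. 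So $w^{p^d}-w$ has at least that many distinct zeros, eventually more than $\lvert S\rvert$ — a contradiction. Hence $B_{\mathrm{nc}}$ is finite in all characteristics.

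Finally I would deduce the dichotomy from the core claim by an affine-shift trick. Suppose $B$ is infinite but $B \not\subseteq F$; fix $b^* \in B_{\mathrm{nc}}$. Since $B_{\mathrm{nc}}$ is finite, $B \cap F$ is infinite; choose $b_0 \in B \cap F$ (automatically $b_0 \neq b^*$) and set $B' := \{(b-b^*)/(b_0-b^*) : b \in B \setminus \{b^*\}\}$. One checks that $B'$ again consists of $S$-units with all pairwise differences $S$-units and contains $1$, and that $B'$ is infinite, so the core claim applied to $B'$ forces all but finitely many elements of $B'$ into $F$; intersecting with the infinite set $B \cap F$ yields $b \in (B \cap F) \setminus \{b_0\}$ with $c := (b-b^*)/(b_0-b^*) \in F$. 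Then $b - b^* = c(b_0-b^*)$ with $b, b_0, c \in F$: if $c = 1$ this gives $b = b_0$, and if $c \neq 1$ it gives $b^* = (b - c b_0)/(1-c) \in F$ — either way a contradiction. Therefore $B$ infinite implies $B \subseteq F$, proving the dichotomy, and in both cases $\sup_{u \in A}\deg(u) \le \deg(u_0) + \sup_{b \in B}\deg(b) < \infty$.
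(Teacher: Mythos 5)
Your proof is correct, but it takes a genuinely different route from the paper in two places. In characteristic $p$, the paper re-invokes Theorem~\ref{thm:sunit} a second time: after pigeonholing to a $v \in C$ with $v^{p^{m_i}} \in B$ infinitely often, it forms $u_i/u_1 = (v^{-1+p^{m_i-m_1}})^{p^{m_1}}$, notes that this must itself be a Frobenius power of an element of $C$, and derives a degree contradiction because $-1+p^{m_i-m_1}$ is coprime to $p$, so $v^{-1+p^{m_i-m_1}}$ is the separable part of $u_i/u_1$ and must lie in $C$ despite its degree being too large. Your argument instead fixes one non-constant $w \in B$, observes from the pairwise-difference hypothesis that $w^{p^d}-w$ is an $S$-unit for arbitrarily large $d$, and contradicts this by factoring $X^{p^d}-X$ over the prime field into an unbounded number of pairwise coprime irreducibles, each of which specializes at $w$ to a non-constant function contributing zeros at places disjoint from the others, so that $w^{p^d}-w$ eventually has more than $\lvert S\rvert$ zeros; this is more elementary and sidesteps the implicit uniqueness-of-the-separable-part reasoning that the paper's second appeal to Theorem~\ref{thm:sunit} quietly relies on. For the final dichotomy the paper again returns to Theorem~\ref{thm:sunit}, applying it to the quotients $w/u$ for constant $u/u_0 \in B$ and bounding them using $C$ together with the fact that all such quotients share a single degree, whereas you reduce back to the core claim via the affine substitution $b \mapsto (b-b^*)/(b_0-b^*)$, which preserves the $S$-unit and pairwise-difference structure, and then extract a contradiction from constant-field arithmetic. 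Both routes are valid; yours is somewhat more self-contained, at the small cost of verifying that the shifted set $B'$ inherits the hypotheses.
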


\begin{proof}
Denote $B' := \left\lbrace b \in B \text{ non-constant } \right\rbrace$.  Our first step is to show that $B'$ is finite.  Suppose for the sake of a contradiction that $B'$ is infinite.  Observe that its elements satisfy
\[
\frac{u}{u_0}+\frac{u_0-u}{u_0} = 1.
\]
and that $u/u_0$ and $(u_0-u)/u_0$ are non-constant $S$-units.  If $\text{char}(L)=0$, then Theorem \ref{thm:sunit} tells us that $B'$ is contained in the finite set $C$ of separable, non-constant solutions to $X+Y = 1$ in $S$-units, and we are finished with this step.  Otherwise, $\text{char}(L)=p>0$ and Theorem \ref{thm:sunit} tells us that $B'$ consists of $p^m$-th powers of elements of $C$.  By the pigeonhole principle, there is an element $v \in C$ and a strictly increasing sequence $\left\lbrace m_i \right\rbrace_{i=1}^{\infty}$ of positive integers such that $v^{p^{m_i}} \in B'$ for all $i \geq 1$.  By definition of $B'$, there exist $u_i \in A$ such that $v^{p^{m_i}} = u_i/u_0$ for all $i \geq 1$.  Choose $i$ large enough that $-1+p^{m_i-m_1} > \max_{w \in C} \text{deg}(w)$.  Then,
\[
\frac{u_i}{u_1} = \frac{u_i}{u_0} \frac{u_0}{u_1} = v^{p^{m_i}} v^{-p^{m_1}} = \left( v^{-1+p^{m_i-m_1}} \right)^{p^{m_1}}.
\]
Moreover, $u_i/u_1$ and $(u_1-u_i)/u_1$ are non-constant $S$-units satisfying
\[
\frac{u_i}{u_1}+\frac{u_1-u_i}{u_1} = 1,
\]
so $u_i/u_1$ is a $p^m$-th power of an element of $C$ by Theorem \ref{thm:sunit}.  Since $-1+p^{m_i-m_1}$ is relatively prime to $p$, we conclude that $v^{-1+p^{m_i-m_1}} \in C$.  However, by construction
\[
\text{deg}(v^{-1+p^{m_i-m_1}}) \geq -1+p^{m_i-m_1} > \max_{w \in C} \text{deg}(w),
\]
which gives the desired contradiction.

Therefore, $B'$ is finite.  If $B'$ is empty, then $B \subseteq F$ and we are finished.  Otherwise, there is some non-constant element $w/u_0 \in B$.  Note that for any constant $u/u_0 \in B$, we have that $\frac{w}{u} = \frac{w}{u_0} \left( \frac{u}{u_0} \right)^{-1}$ so $\text{deg}(w/u) = \text{deg}(w/u_0)$ and in particular $w/u$ is non-constant. Thus, $w/u$ and $(u-w)/u$ are non-constant $S$-units that satisfy
\[
\frac{w}{u}+\frac{u-w}{u} = 1
\]
and hence $w/u$ is also a $p^m$-th power of an element of $C$.  Since $C$ is finite and all such $w/u$ have the same degree, we conclude that there are only finitely many such $w/u$, and hence there are only finitely many constants $u/u_0 \in B$.
\end{proof}

We collect some facts about analytic functions on basic open sets that we will need in the following proposition.

\begin{prop} \label{prop:basicopen}
Let $U$ be an open subset of $\mathbb{P}^{1,\text{an}}$ of the form
\[
U = D(0,1/r) \setminus \left( \coprod_{i=1}^n \overline{D}(a_i,r) \right)
\]
with $r<1$ and each $a_i$ in a different finite tangent direction at the Gauss point $\zeta_{0,1}$.  Let $\psi$ be an analytic function on $U$ with no zeros on $U$.  Choose $\sigma \in K$ with $\left\lvert \sigma \right\rvert = \left\lvert \psi(\zeta_{0,1}) \right\rvert$ and denote by $d$ the degree of the rational function $\widetilde{\psi/\sigma}$.  Denote by $[0,\infty]$ the segment from $0$ to $\infty$ in $\mathbb{P}^{1,\text{an}}$.  Then, $\psi$ has the following properties:
\begin{enumerate}[(a)]
\item $\psi$ has a unique continuous extension to $\overline{U}$.
\item Either $\psi(U)$ is contained in a connected component of $\mathbb{P}^{1,\text{an}} \setminus [0,\infty]$ or $\psi(\zeta_{0,1}) \in [0,\infty]$.
\item The Weierstrass degree of $\psi$ on an annulus contained in $U$ with one endpoint at $\zeta_{0,1}$ in the direction of a point $a \in U(K)$ is the order of vanishing of $\widetilde{\psi/\sigma}$ at $\widetilde{a}$ (negative if $\widetilde{a}$ is a finite pole or if $\widetilde{a} = \widetilde{\infty}$ is a zero).
\item $\psi(\overline{U}) \subseteq \overline{D}(0,R)$ where $R = \left\lvert \psi(\zeta_{0,1}) \right\rvert r^{-d}$.   
\end{enumerate}
\end{prop}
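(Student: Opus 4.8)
The plan is to prove the four statements in the order (a), (c), (b), (d), with (b) and (d) deduced from the first two. Throughout set $g := \psi/\sigma$, so $g$ is analytic, zero-free and pole-free on $U$ with $\lvert g(\zeta_{0,1})\rvert = 1$ and reduction $h := \widetilde g$ of degree $d$. First I would record the topology of $\overline U$: since the removed discs sit in $n$ distinct finite residue directions at $\zeta_{0,1}$ and $r<1$, one has $\overline U = U \cup \{\zeta_{0,1/r}\} \cup \{\zeta_{a_1,r},\dots,\zeta_{a_n,r}\}$, and $U$ is the union of $\{\zeta_{0,1}\}$, the open annulus between $\zeta_{0,1}$ and $\zeta_{0,1/r}$ (``towards $\infty$''), the $n$ open annuli between $\zeta_{0,1}$ and $\zeta_{a_i,r}$, and the radius-one discs $B(\vec v)$ for every remaining tangent direction $\vec v$ at $\zeta_{0,1}$, each of which lies entirely in $U$. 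On all of these pieces $g$ is zero-free and pole-free.

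For (a), each of the finitely many points $\xi_0\in\overline U\setminus U$ has a basic neighbourhood meeting $U$ in a single open annulus with $\xi_0$ as one boundary endpoint. There $\psi$ is zero-free, so by Proposition~\ref{prop:wdeg} its inner and outer Weierstrass degrees agree, with common value $N$; writing $\psi=\sum_k c_k(z-a')^k$ on the annulus, I would check that for $N\ne0$ the series $\psi-c_0$ still has the single Weierstrass slope $N$, so that (by the computation behind Proposition~\ref{prop:scaling}) $\lvert\psi(z)-c_0\rvert=\lvert c_N\rvert\,\lvert z-a'\rvert^N$ throughout; hence $\psi$ sends $\zeta_{a',t}$ to $\zeta_{c_0,\lvert c_N\rvert t^N}$ and sends everything retracting to $\zeta_{a',t}$ into $\overline D(c_0,\lvert c_N\rvert t^N)$, and for $N=0$ one gets the analogous statement with radius $\max_{k\ne0}\lvert c_k\rvert t^k$. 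Letting $t$ tend to the radius of $\xi_0$ yields a well-defined limit, so $\psi$ extends to $\xi_0$, and the disc-containments give continuity; uniqueness is automatic since $U$ is dense in $\overline U$, and patching over the finitely many $\xi_0$ finishes (a). The careful verification of this local picture — especially identifying the limit when $N=0$ and controlling how one approaches $\xi_0$ off the skeleton — is the step I expect to be the main obstacle.

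For (c), fix $a\in U(K)$ and an annulus $A\subseteq U$ with one endpoint $\zeta_{0,1}$ in the direction $\widetilde a$, and expand $g=\sum_k d_k(z-a')^k$ on $A$ — with $a'=a_i$ if $\widetilde a$ is the direction of $\overline D(a_i,r)$, $a'=a$ if $\widetilde a\notin\{0,\infty\}$ is an interior direction, and (replacing $z-a'$ by $z$) the coordinate at infinity if $\widetilde a=\widetilde\infty$. Zero-freeness makes the Weierstrass degree $N$ of $g$ on $A$ well defined, and $\lvert g(\zeta_{0,1})\rvert=1$ forces $\max_k\lvert d_k\rvert=1$; reading off the Weierstrass degree at the $\zeta_{0,1}$-end of $A$ identifies $N$ with $\min\{k:\lvert d_k\rvert=1\}$ (or with $\max\{k:\lvert d_k\rvert=1\}$ when that end is the inner one, i.e.\ for $\widetilde a=\widetilde\infty$). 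On the other hand the reduction $h$, read in the direction $\widetilde a$, is the Laurent polynomial $\sum_{k:\,\lvert d_k\rvert=1}\widetilde{d_k}\,(\overline z-\widetilde a)^k$ — finitely many terms by convergence, obtained as the stabilised reduction of the partial sums — whose order of vanishing at $\widetilde a$ is $\min\{k:\lvert d_k\rvert=1\}$, and whose order at $\widetilde\infty$ (in the coordinate $z$) is $-\max\{k:\lvert d_k\rvert=1\}$; comparison gives (c), the sign at $\widetilde\infty$ being exactly the parenthetical exception, and for an interior direction $B(\vec v)\subseteq U$ is zero-free, so there both sides are $0$.

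Finally, for (b) and (d): if $d\ge1$ then $h$ is non-constant, so $g(\zeta_{0,1})=\zeta_{0,1}$ and $\psi(\zeta_{0,1})=\zeta_{0,\lvert\sigma\rvert}\in[0,\infty]$, the second alternative of (b). If $d=0$, then $h$ is a nonzero constant, so by (c) the Weierstrass degree of $\psi$ vanishes on every boundary annulus; thus $\lvert\psi\rvert\equiv\lvert\sigma\rvert$ along those annuli, and on each zero-free disc $B(\vec v)$ the absolute value of $\psi$ is constant and, by continuity at $\zeta_{0,1}$, equal to $\lvert\sigma\rvert$ — so $\lvert\psi\rvert\equiv\lvert\sigma\rvert$ on $U$, and $g(U)\subseteq\{x:\lvert z\rvert_x=1\}=\{\zeta_{0,1}\}\cup\coprod_{\widetilde b\in\widetilde{K}^\times}D(b,1)$. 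Since $g$ is non-constant analytic, hence locally finite, Theorem~\ref{thm:openmap} shows $g$ is open, so $\zeta_{0,1}\notin g(U)$ (otherwise $g$ would take values $\zeta_{0,t}$ with $t\ne1$ near $1$); thus the connected set $g(U)$ lies in a single $D(b,1)$ and $\psi(U)=\sigma\, g(U)$ in a single component of $\mathbb{P}^{1,\text{an}}\setminus[0,\infty]$, proving (b). For (d), by (c) the Weierstrass degree of $\psi$ on the annulus towards $a_i$ is $\operatorname{ord}_{\widetilde a_i}(h)\ge-d$ and on the annulus towards $\infty$ is $\le d$ (a pole, resp.\ the total degree, of the degree-$d$ map $h$ being at most $d$); since along each such annulus $\log\lvert\psi\rvert$ is affine in the logarithm of the radius with that slope and equals $\log\lvert\sigma\rvert$ at the $\zeta_{0,1}$-end, we get $\lvert\psi\rvert\le\lvert\sigma\rvert r^{-d}$ along and — using (a) — at the far ends of these annuli, while $\lvert\psi\rvert\equiv\lvert\sigma\rvert\le\lvert\sigma\rvert r^{-d}$ on the remaining discs; hence $\psi(\overline U)\subseteq\overline D(0,R)$ with $R=\lvert\psi(\zeta_{0,1})\rvert r^{-d}$.
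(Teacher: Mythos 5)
Your overall plan is sound and the final statements in each step are correct, but two places need more care and differ from the paper's approach in ways that create gaps.

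For (c), you want to identify the reduction $h=\widetilde{g}$ directly from the Laurent expansion $g=\sum_k d_k(z-a')^k$ on the boundary annulus, asserting that the Laurent expansion of $h$ about $\widetilde{a'}$ is $\sum_{|d_k|=1}\widetilde{d_k}(\overline z-\widetilde{a'})^k$. That is true, but it is precisely the nontrivial content here: the reduction $\widetilde{g}$ is defined intrinsically (normalise a representing rational function, reduce coefficients, or stabilise the reductions of a uniformly convergent sequence), and it is not immediate that this reduction ``commutes with Laurent expansion about $a'$.'' The paper's proof of (c) establishes exactly this fact by reducing to rational $\psi$ and then tracking the Weierstrass degree under repeated multiplication and division by linear factors $z-b$; some argument of this kind is needed and cannot simply be asserted.

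For (b), your argument in the $d=0$ case routes through Theorem~\ref{thm:openmap}, but that theorem is stated for a finite morphism, and you have not verified that $g$ restricted to $U$ is finite (nor handled the degenerate case $g$ constant, which does occur when $d=0$ and makes the ``non-constant'' hypothesis fail). Neither is fatal — one can check $\zeta_{0,1}\notin g(U)$ directly from the Laurent/power series data on each annulus and disc (Weierstrass degree $0$ forces the dominant coefficient $d_0$ to have absolute value $1$ strictly exceeding $\max_{k\neq 0}|d_k|t^k$ on each piece, so $g$ stays in $D(\widetilde{d_0},1)$) — but as written the step is not justified. Notice also that the paper's proof of (b) is much shorter and independent of (c): if $\psi(U)$ is not inside one component of $\mathbb{P}^{1,\text{an}}\setminus[0,\infty]$, then $|\psi|$ is non-constant, so $\psi$ has non-zero Weierstrass degree on some annulus towards an $a_i$ or $\infty$ (zero-freeness rules out the interior discs), and the explicit formula from the proof of (a) then puts $\psi(\zeta_{0,1})$ on $[0,\infty]$. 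Your deduction of (b) from (c), once the gaps are closed, is a legitimate alternative; it just shifts the real work into (c). Parts (a) and (d) match the paper's arguments in substance.
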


\begin{proof}
Let $1 \leq i \leq n$ be given.  Consider the Laurent series expansion $\psi(z) = \sum_{k=-\infty}^{\infty} b_{i,k} (z-a_i)^k$ on the annulus $D(0,1) \setminus \overline{D}(a_i,r)$.  Choose $u_i$ with $r < u_i \leq 1$ small enough that $\psi-b_{i,0}$ has no zeros on the annulus $D(a_i,u_i) \setminus \overline{D}(a_i,r)$.  Then, its inner and outer Weierstrass degrees must coincide on this annulus; say both are equal to $N_i$.  From the non-archimedean triangle inequality, we have $\left\lvert \psi(z)-b_{i,0} \right\rvert = \left\lvert b_{i,N_i} \right\rvert \left\lvert z-a_i \right\rvert^{N_i}$ for $z \in D(a_i,u_i) \setminus \overline{D}(a_i,r)$ and hence $\psi(\zeta_{a_i,t}) = \zeta_{b_{i,0}, \left\lvert b_{i,N_i} \right\rvert t^{N_i}}$ for $r< t \leq u_i$.

Define $\psi(\zeta_{a_i,r}) = \zeta_{b_{i,0}, \left\lvert b_{i,N_i} \right\rvert r^{N_i}}$.  An analogous argument can be used to extend $\psi$ to $\zeta_{0,1/r}$, and it is easy to check that this extension of $\psi$ to $\overline{U}$ is continuous, proving (a).

For (b), assume that $\psi(U)$ is not contained in a connected component of $\mathbb{P}^{1,\text{an}} \setminus [0,\infty]$.  This means that $\left\lvert \psi \right\rvert$ is not constant and hence there is some direction at $\zeta_{0,1}$ such that $\psi$ has non-zero Weierstrass degree on an annulus with one endpoint at $\zeta_{0,1}$ in that direction.  Since $\psi$ has no zeros, this direction can only be towards one of the $a_i$ or towards $\infty$.  Assume that it is towards some $a_i$; the case of $\infty$ is analogous.  Then, we have that $\left\lvert b_{i,N_i} \right\rvert > \left\lvert b_{i,0} \right\rvert$ and we can take $t = u_i =1$ in the above computation.  This yields 
\[
\psi(\zeta_{0,1}) = \psi(\zeta_{a_i,1}) = \zeta_{b_{i,0}, \left\lvert b_{i,N_i} \right\rvert} = \zeta_{0, \left\lvert b_{i,N_i} \right\rvert} \in [0,\infty].
\]

For (c), it suffices to consider the case where $\psi$ is a rational function and $\left\lvert \psi(\zeta_{0,1}) \right\rvert = 1$.   First, replace $\psi$ by $\psi/\sigma$, and then use the fact that a general analytic $\psi$ has the form $\psi = \psi_1+\text{error}$, where $\psi_1$ is a rational function, $\widetilde{\psi}=\widetilde{\psi_1}$, and $\psi$ and $\psi_1$ have the same Weierstrass degrees on each annulus contained in $U$ with one endpoint at $\zeta_{0,1}$.  (The statement about reductions follows immediately from the definition in terms of limits of rational functions, and it is easy to check that adding a Laurent series with small coefficients will not affect the Weierstrass degrees on an annulus.)  For the rational function case, let $a,b \in K$ with $\left\lvert a \right\rvert, \left\lvert b \right\rvert \leq 1$ be given.  If $\widetilde{a}=\widetilde{b}$, then $\left\lvert b-a \right\rvert < 1$ and it is easy to check that multiplying a Laurent series $\sum_{k=-\infty}^{\infty} b_k (z-a)^k$ with no zeros on $D(0,1) \setminus \overline{D}(a,r)$ by $z-b = (z-a)-(b-a)$ will increase the outer Weiertrass degree by $1$.  It follows that multiplying such a Laurent series by $(z-b)^{-1}$ will decrease the outer Weierstrass degree by $1$.  If $\widetilde{a} \neq \widetilde{b}$, then $\left\lvert b-a \right\rvert = 1 > \left\lvert z-a \right\rvert$ for $z$ in the above annulus and multiplying a Laurent series of the above form by $z-b = (z-a)-(b-a)$ will leave the outer Weierstrass degree unchanged, and hence the same holds for $(z-b)^{-1}$.  Similarly, since $\left\lvert b \right\rvert = 1 < \left\lvert z\right\rvert$ for $z \in D(0,1/r) \setminus \overline{D}(0,1)$, it follows that multiplying a Laurent series with no zeros on this annulus by $z-b$ will increase the inner Weierstrass degree by $1$ and multiplying by $(z-b)^{-1}$ will decrease the inner Weierstrass degree by $1$.  Starting with a constant function with absolute value equal to $1$ and repeatedly applying these computations shows (c) for a rational function $\psi$ with no zeros on $U$ and $\left\lvert \psi(\zeta_{0,1}) \right\rvert =1$. 

For (d), note that $\left\lvert \psi \right\rvert$ is constant equal to $\left\lvert \psi(\zeta_{0,1}) \right\rvert$ on every disc contained in $U$ with $\zeta_{0,1}$ as its boundary point and on every annulus $D(0,1) \setminus \overline{D}(a_i,r)$ where $\psi$ has Weierstrass degree $0$.  For the remaining directions, the above computation (together with the analogous computation for the direction towards $\infty$) shows that the maximal value of $\left\lvert \psi \right\rvert$ on $\overline{U}$ occurs at one of the boundary points of $\overline{U}$ and is equal to
\[
\max_{i} \left\lvert b_{i,N_i} \right\rvert r^{N_i} = \max_i \left\lvert \psi(\zeta_{0,1}) \right\rvert r^{N_i} \leq \left\lvert \psi(\zeta_{0,1}) \right\rvert r^{-d},
\]
using (c) for the last inequality.
\end{proof}

\begin{rmk}
Proposition \ref{prop:basicopen}(c) in the case that $\psi$ is a rational function is essentially \cite{RL03}*{Proposition 3.3} or \cite{BR10}*{Corollary 9.25}, since the directional multiplicity of $\psi$ at $\zeta_{0,1}$ in the direction towards some $a \in U$ in the terminology of \cite{BR10} is the same as the Weierstrass degree of $\psi$ on a small annulus towards $a$ with one endpoint at $\zeta_{0,1}$ if this Weierstrass degree is positive. 
\end{rmk}

We also need the following more technical lemma about analytic functions on basic open sets.
\begin{lem} \label{lem:injective}
Let $U$ be an open subset of $\mathbb{P}^{1,\text{an}}$ of the form
\[
U = D(0,1/r) \setminus \left( \coprod_{i=1}^n \overline{D}(a_i,r) \right)
\]
with $r<1$ and each $a_i$ in a different finite tangent direction at the Gauss point $\zeta_{0,1}$.  Let $x_0 \in U$ be given, and denote $r_0 := \min_{1 \leq i \leq n} \left\lbrace \left\lvert T - a_i \right\vert_{x_0}, 1/ \left\lvert x_0 \right\rvert \right\rbrace$.  Let $\psi_1$ and $\psi_2$ be analytic functions on $U$, and denote $t_j = \left\lvert \psi_j(x_0) \right\vert$ for $j=1,2$.  Suppose $\psi_1$ and $\psi_2$ satisfy
\begin{enumerate}[$(1)$]
\item $\psi_1$, $\psi_2$, and $\psi_1-\psi_2$ have no zeros on $U$
\item $\max(r_0,r/r_0) t_1 < t_2 < t_1$ if $r_0<1$
\item $r t_1< t_2 <t_1$ if $r_0=1$.
\end{enumerate}
Then, $\displaystyle \frac{\left\lvert \psi_2(x) \right\rvert}{\left\lvert \psi_1(x) \right\rvert} = \frac{t_2}{t_1}$ for all $x \in U$.
\end{lem}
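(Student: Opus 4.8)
The plan is to reduce the statement to an assertion about the Weierstrass degrees of $\psi_1$, $\psi_2$, and $\psi_1-\psi_2$ in the finitely many ``essential'' tangent directions at the Gauss point $\zeta_{0,1}$, and then to read off those degrees from the three inequalities. \emph{Step 1 (the profile of a nonvanishing function on $U$).} For any nonvanishing analytic $\psi$ on $U$, each of the annuli $D(a_i,1)\setminus\overline D(a_i,r)$ $(1\le i\le n)$ and $D(0,1/r)\setminus\overline D(0,1)$ is contained in $U$, so by Proposition \ref{prop:wdeg} the inner and outer Weierstrass degrees of $\psi$ there coincide; call them $m^{(i)}$ and $m^{(\infty)}$. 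Arguing with the non-archimedean triangle inequality exactly as in the proof of Proposition \ref{prop:basicopen}(a), $|\psi|$ is locally constant on $U$ away from the $n+1$ essential spines (the paths from $\zeta_{0,1}$ to the boundary points $\zeta_{a_i,r}$ and $\zeta_{0,1/r}$), equals $|\psi(\zeta_{0,1})|$ at $\zeta_{0,1}$, and satisfies $\log|\psi(\zeta_{a_i,t})|=\log|\psi(\zeta_{0,1})|+m^{(i)}\log t$ along the $i$-th spine $r\le t\le 1$, and analogously toward $\infty$ (cf. Proposition \ref{prop:scaling}). Consequently, writing $s_j:=|\psi_j(\zeta_{0,1})|$ and $m_j^{(i)},m_j^{(\infty)}$ for the degrees of $\psi_j$, the ratio $|\psi_2/\psi_1|$ is constant on $U$ --- in which case it equals $t_2/t_1$ after evaluating at $x_0$ --- if and only if $\delta^{(i)}:=m_2^{(i)}-m_1^{(i)}=0$ for all $i$ and $\delta^{(\infty)}:=m_2^{(\infty)}-m_1^{(\infty)}=0$. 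Finally, Proposition \ref{prop:basicopen}(c) identifies these degrees with orders of vanishing of a rational function over $\widetilde{K}$ (the reduction $\widetilde{\psi_j/\sigma_j}$), which has equally many zeros and poles; this yields the balance identity $\delta^{(\infty)}=\sum_{i=1}^n\delta^{(i)}$ with a consistent sign convention. So it suffices to show all the $\delta^{(i)}$ vanish, knowing they satisfy this single linear relation.

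\emph{Step 2 (no interior crossing).} On any essential spine with $m_1^{(\bullet)}\ne m_2^{(\bullet)}$, the affine-in-$\log t$ functions $\log|\psi_1|$ and $\log|\psi_2|$ agree at exactly one radius $t^\ast$, and I claim $t^\ast\notin(r,1)$. Otherwise, for radii near but unequal to $t^\ast$ one has $|\psi_1|\ne|\psi_2|$, hence $|\psi_1-\psi_2|=\max(|\psi_1|,|\psi_2|)$; the right-hand side, as a function of $\log t$, has a genuine corner at $t^\ast$, whereas $\log|\psi_1-\psi_2|$ is affine near $t^\ast$ because $\psi_1-\psi_2$ is nonvanishing on the sub-annuli of $U$ in question (Proposition \ref{prop:wdeg} once more) --- a contradiction.

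\emph{Step 3 (forcing $t^\ast$ into $(r,1)$).} If $r_0<1$, one checks from its definition that $x_0$ lies in the subtree under a single essential direction, say the $i_0$-th, with $t_j=s_j\, r_0^{\,m_j^{(i_0)}}$. Plugging this into the crossing radius $t^\ast$ for that spine and using $r<r_0<1$ together with all three inequalities $\max(r_0,r/r_0)\,t_1<t_2<t_1$ --- the weight $r_0$ handling the case $\delta^{(i_0)}<0$ and the weight $r/r_0$ the case $\delta^{(i_0)}>0$ --- one finds that $\delta^{(i_0)}\ne0$ would put $t^\ast$ in $(r,1)$, contradicting Step 2; so $\delta^{(i_0)}=0$, whence $t_1/t_2=s_1/s_2$. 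In all cases (this one, or $r_0=1$ where $t_j=s_j$ and hypothesis $(3)$ applies directly) the bounds now give $1<s_1/s_2<1/r$, using $r_0>r$. But then, for every essential direction, $\delta^{(\bullet)}<0$ would make the crossing radius $t^\ast=(s_1/s_2)^{1/\delta^{(\bullet)}}$ lie in $(r,1)$ (since $1<s_1/s_2<1/r$ and $|\delta^{(\bullet)}|\ge 1$), again contradicting Step 2; so every $\delta^{(\bullet)}\ge 0$, and being nonnegative with zero sum they all vanish. By Step 1 this gives $|\psi_2(x)|/|\psi_1(x)|=t_2/t_1$ for all $x\in U$.

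The main obstacle is Step 3: confirming that the specific weights $r_0$ and $r/r_0$ are exactly strong enough to trap the crossing radius in the open interval $(r,1)$ for \emph{both} signs of $m_1^{(i_0)}-m_2^{(i_0)}$, and recognizing that the balance identity of Step 1 is what lets one pass from the essential direction through $x_0$ to all the others. Getting the tree bookkeeping of Step 1 fully rigorous --- that $|\psi_j|$ really is determined by these $n+1$ degrees --- is the remaining technical point, but it mirrors the proof of Proposition \ref{prop:basicopen}.
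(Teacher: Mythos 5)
Your proposal is essentially correct and follows the same underlying strategy as the paper's proof: reduce to showing that the ratio $\psi := \psi_2/\psi_1$ has vanishing Weierstrass degree in every tangent direction at $\zeta_{0,1}$, using the nonvanishing of $\psi_1-\psi_2$ to rule out the nonzero cases. Your Step 2 (``no interior crossing'') is a geometric reformulation of the paper's computation that $\psi-1$ would otherwise have unequal inner and outer Weierstrass degrees on the offending annulus, and your balance identity $\delta^{(\infty)}=\sum_i\delta^{(i)}$ plays the role of the paper's observation that a nonconstant rational function over $\widetilde{K}$ must have both zeros and poles. The paper first shows the degree vanishes on the spine through $x_0$ (your $\delta^{(i_0)}=0$), which normalizes $\left\lvert \psi(\zeta_{0,1}) \right\rvert$, and then rules out poles of $\widetilde{\psi}$ in the remaining directions; you recover this by ruling out one sign of $\delta^{(\bullet)}$ on each remaining spine and invoking the balance. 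So the two arguments are parallel step for step, differing mainly in vocabulary (crossing radii and corners of piecewise-affine profiles versus explicit Laurent-coefficient inequalities).

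A few points where your write-up needs tightening, although none are fatal. First, the sign bookkeeping for the direction toward $\infty$ is off as stated: with the annulus convention on $D(0,1/r)\setminus\overline D(0,1)$, the crossing radius $\rho^\ast=(s_1/s_2)^{1/\delta^{(\infty)}}$ lands in $(1,1/r)$ precisely when $\delta^{(\infty)}>0$ (not $<0$), since $s_1/s_2>1$; alternatively, switch to the ``order of vanishing at $\widetilde\infty$'' convention, but then the balance reads $\delta^{(\infty)}+\sum_i\delta^{(i)}=0$ rather than $\delta^{(\infty)}=\sum_i\delta^{(i)}$. Either way, the conclusion is that the finite $\delta^{(i)}$ are forced to be $\ge 0$ and $\delta^{(\infty)}$ (annulus convention) forced to be $\le 0$, and the balance pins them all to $0$; your phrase ``nonnegative with zero sum'' needs to be read with this mixed convention. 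Second, your parenthetical that ``the weight $r_0$ handles $\delta^{(i_0)}<0$ and $r/r_0$ handles $\delta^{(i_0)}>0$'' is not quite how the inequalities distribute: for $\delta^{(i_0)}>0$ the automatic bound $t^\ast>r_0>r$ already gives the lower endpoint and both weights go toward $t^\ast<1$, while for $\delta^{(i_0)}<0$ the bound $t^\ast<r_0<1$ is automatic and both weights go toward $t^\ast>r$. These are cosmetic and do not affect the correctness of the argument, which I have checked works for both signs of $\delta^{(i_0)}$ and for both regimes $r_0^2\ge r$ and $r_0^2<r$.
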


\begin{proof}
For each direction $\vec{v}$ at $\zeta_{0,1}$, denote by $A_{\vec{v}}$ the largest open annulus or disc with one endpoint at $\zeta_{0,1}$ in the direction $\vec{v}$ that is contained in $U$.  Note that by (1), we have that $\psi := \psi_2/\psi_1$ is analytic on $U$ and omits the values $0$ and $1$ on $U$.  Denote $t := t_2/t_1$.  Since $\psi$ has no zeros or poles on $U$, it follows that it has the same inner and outer Weierstrass degree on each $A_{\vec{v}}$ by Proposition \ref{prop:wdeg}.  Denote by $d_{\vec{v}}$ the Weierstrass degree of $\psi$ on $A_{\vec{v}}$.

\textbf{Step 1: Prove the result in the special case $x = \zeta_{0,1}$}.  If $x_0 = \zeta_{0,1}$, then this follows immediately.  Now, assume that $x_0 \neq \zeta_{0,1}$, and denote by $\vec{w}$ the direction at $\zeta_{0,1}$ towards $x_0$.  It suffices to show that $d_{\vec{w}} = 0$, since this implies that $\left\lvert \psi \right\rvert$ is constant on $\overline{A}_{\vec{w}}$.  If $A_{\vec{w}}$ is a disc, then this follows immediately from the fact that $\psi$ has no zeros.  Otherwise, $\vec{w}$ is the direction towards some $a_i$ or $\infty$, and hence $r_0<1$ and the inequalities from (2) apply.  We will assume that $\vec{w}$ is in the direction of some $a_i$; the case of $\infty$ is analogous and can be reduced to this case by a change of coordinates on $U$.  This means that $\psi \restrict{A_{\vec{w}}}$ has the form
\[
\psi(y) = \sum_{k= -\infty}^{\infty} b_k (y-a_i)^k.
\]
The condition that the inner and outer Weierstrass degrees on this annulus are both $d_{\vec{w}}$ means that $\left\lvert b_{d_{\vec{w}}} \right\rvert > \left\lvert b_k \right\rvert$ and $\left\lvert b_{d_{\vec{w}}} \right\rvert r^{d_{\vec{w}}} > \left\lvert b_k \right\rvert r^k$ for all $k \neq d_{\vec{w}}$.  We also have
\[
\left\lvert b_{d_{\vec{w}}} \right\rvert r_0^{d_{\vec{w}}} = \left\lvert \psi(x_0) \right\rvert = t,
\]
so we can compute
\[
\left\lvert b_{d_{\vec{w}}} \right\rvert r^{d_{\vec{w}}} = \left\lvert b_{d_{\vec{w}}} \right\rvert r_0^{d_{\vec{w}}} \left( \frac{r}{r_0} \right)^{d_{\vec{w}}} = t \left( \frac{r}{r_0} \right)^{d_{\vec{w}}}
\]
and
\[
\left\lvert b_{d_{\vec{w}}} \right\rvert = \left\lvert b_{d_{\vec{w}}} \right\rvert r_0^{d_{\vec{w}}} r_0^{-d_{\vec{w}}} = t r_0^{-d_{\vec{w}}}.
\]

Now, suppose for the sake of a contradiction that $d_{\vec{w}} \neq 0$.

\textbf{Case 1:} $d_{\vec{w}} < 0$.  In this case, using the inequality $t > r/r_0$ from (2), we compute
\[
\left\lvert b_{d_{\vec{w}}} \right\rvert r^{d_{\vec{w}}} = t \left( \frac{r}{r_0} \right)^{d_{\vec{w}}} \geq t \left( \frac{r}{r_0} \right)^{-1} > 1
\]
so the inner Weierstrass degree of $\psi-1$ on $A_{\vec{w}}$ is $d_{\vec{w}}$.  However, using the inequality $t < 1$ from (2), we also have
\[
\left\lvert b_{d_{\vec{w}}} \right\rvert = t r_0^{-d_{\vec{w}}} < 1,
\]
which shows that the outer Weierstrass degree of $\psi-1$ on $A_{\vec{w}}$ is $0$.

\textbf{Case 2:} $d_{\vec{w}} > 0$.  In this case, using the inequality $t < 1$ from (2), we compute
\[
\left\lvert b_{d_{\vec{w}}} \right\rvert r^{d_{\vec{w}}} = t \left( \frac{r}{r_0} \right)^{d_{\vec{w}}} < 1,
\]
so the inner Weierstrass degree of $\psi-1$ on $A_{\vec{w}}$ is $0$.  However, using the inequality $t > r_0$ from (2), we also have
\[
\left\lvert b_{d_{\vec{w}}} \right\rvert = t r_0^{-d_{\vec{w}}} \geq t r_0^{-1} > 1,
\]
which shows that the outer Weierstrass degree of $\psi-1$ on $A_{\vec{w}}$ is $d_{\vec{w}}$.

In both cases, we find that $\psi-1$ has different inner and outer Weierstrass degrees on $A_{\vec{w}}$, and therefore has a zero on this annulus.  This contradicts that $\psi$ omits the value $1$ on $U$, as desired.
 
\textbf{Step 2: Prove the result for general $x \in U$.} As in the proof of (a), we need to show that $\psi$ has Weierstrass degree $0$ on each annulus $A_{\vec{v}}$.  Note that this is equivalent to showing that $\widetilde{\psi}$ is identically constant by Proposition \ref{prop:basicopen}(c).  Since a non-constant rational function must have both zeros and poles, it suffices to show that $\widetilde{\psi}$ has no poles.  Suppose for the sake of a contradiction that $\widetilde{\psi}$ has a pole.  This either means that there is a direction $\vec{v}$ not equal to the direction towards $\infty$ at $\zeta_{0,1}$ such that $\psi$ has negative Weirstrass degree on $A_{\vec{v}}$ or that $\psi$ has positive Weirstrass degree on the annulus towards $\infty$.  Again, we will assume that $\psi$ has negative Weierstrass degree on some $A_{\vec{v}}$; the case of $\infty$ can be reduced to this case by a change of coordinates.  As in Step 1, we must have that $A_{\vec{v}}$ is an annulus, not a disc, and therefore $\vec{v}$ is the direction towards some $a_j$.  Thus, $\psi\restrict{A_{\vec{v}}}$ has the form
\[
\psi(y) = \sum_{k = -\infty}^{\infty} c_k(y-a_j)^k.
\]
We also have $\left\lvert c_{d_{\vec{v}}} \right\rvert = \left\lvert \psi(\zeta_{0,1}) \right\rvert = t$ from Step 1.  Using the inequality $t > r_0 > r$ from (2) or the inequality $t > r$ from (3), we compute
\[
\left\lvert c_{d_{\vec{v}}} \right\rvert r^{d_{\vec{v}}} = t r^{d_{\vec{v}}} \geq t r^{-1} > 1,
\]
so the inner Weierstrass degree of $\psi-1$ on $A_{\vec{v}}$ is $d_{\vec{v}}$.  However, we also have $\left\lvert c_{d_{\vec{v}}} \right\rvert = t < 1$, which shows that the outer Weierstrass degree of $\psi-1$ on $A_{\vec{v}}$ is $0$.  Again, this contradicts that $\psi$ omits the value $1$ on $U$.
\end{proof}

Finally, we state a necessary and sufficient criterion for a net of type I points in $\mathbb{A}^{1,\text{an}}_{\mathbb{K}}$ to converge to a type II, III, or IV point.

\begin{prop} \label{prop:nets}
Let $\left\lbrace \gamma_j \right\rbrace_{j \in J}$ be a net of type I points in $\mathbb{A}^{1,\text{an}}_{\mathbb{K}}$ indexed by some directed set $(J, \leq)$.  Let $\xi \in \mathbb{A}^{1,\text{an}}_{\mathbb{K}}$ be a type II, III, or IV point.  Then, $\lim\limits_{j \in J} \gamma_j = \xi$ if and only if the following conditions hold
\begin{enumerate}[$(1)$]
\item for every finite tangent direction $\vec{w}$ at $\xi$,
\[
\lim\limits_{j \in J} \left\lvert \gamma_j - \gamma \right\rvert = \text{diam}(\xi)\]
for some (or equivalently any) $\gamma \in B(\vec{w})$
\item $\lim\limits_{j \in J} t_j = \text{diam}(\xi)$, where
\[
t_j := \min \left\lbrace \tau \in \mathbb{R}_{>0} \mid \xi \in \overline{D}(\gamma_j,\tau) \right\rbrace.
\]
\end{enumerate}
\end{prop}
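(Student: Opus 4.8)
The plan is to test the convergence $\gamma_j \to \xi$ against an explicit neighborhood base of $\xi$ and match it with conditions $(1)$ and $(2)$ via routine ultrametric estimates. I would assume $\mathbb{K}$ is algebraically closed, the general case following at the end by pushing everything through the quotient $\pi : \mathbb{P}^{1,\text{an}}_{\widehat{\mathbb{K}^a}} \to \mathbb{P}^{1,\text{an}}_{\mathbb{K}}$, which is continuous, surjective, and preserves types, diameters, and the quantities appearing in $(1)$ and $(2)$. First I record the shape of the relevant neighborhoods. If $\xi$ is of type II or III, write $\xi = \zeta_{a,s}$ with $s = \text{diam}(\xi)$; then for any type I point $b$ with $\lvert b - a\rvert \le s$ and any $\rho < s < R$ the annulus $D(a,R) \setminus \overline{D}(b,\rho) = \{\, z : \rho < \lvert z - b\rvert < R \,\}$ is a basic open neighborhood of $\xi$ (it is the component of $\mathbb{P}^{1,\text{an}} \setminus \{\zeta_{b,\rho},\zeta_{b,R}\}$ containing $\xi$, using $\zeta_{b,s} = \zeta_{a,s}$), and more generally the ``pants'' $D(a,R) \setminus \bigcup_{i=1}^m \overline{D}(b_i,\rho_i)$ with $\rho_i < s < R$ and each $b_i$ a type I point lying in a finite tangent direction at $\xi$ form a neighborhood base at $\xi$. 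I also note that $\xi \in \overline{D}(\gamma,\tau)$ iff $\overline{D}(a,s) \subseteq \overline{D}(\gamma,\tau)$ iff $\tau \ge \max(s,\lvert\gamma - a\rvert)$, so $t(\gamma) = \max(\text{diam}(\xi),\lvert\gamma - a\rvert)$ for type I $\gamma$. If instead $\xi$ is of type IV, represent it by a strictly nested sequence $\overline{D}(a_k,r_k)$ with empty intersection and $r_k \downarrow s := \text{diam}(\xi)$; then $\xi$ has a unique tangent direction, which contains $\infty$, so there are \emph{no} finite tangent directions and $(1)$ is vacuous. Using that $\bigcap_k \overline{D}(a_k,r_k) = \varnothing$ (and that no point of the path towards $\xi$ other than $\xi$ has diameter $s$) one checks that the open discs $D(a_k,\rho)$ with $\rho > r_k$ all contain $\xi$ and form a neighborhood base at $\xi$; moreover $t(\gamma) = \lvert a_{k_0} - \gamma\rvert$, where $k_0 = k_0(\gamma)$ is least with $\gamma \notin \overline{D}(a_{k_0},r_{k_0})$, so $r_{k_0} < t(\gamma) \le r_{k_0 - 1}$, and in particular $t(\gamma) > s$ always.

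For the forward implication, suppose $\lim_j \gamma_j = \xi$. Say $\xi = \zeta_{a,s}$ is of type II or III. Fixing $\rho < s < R$ and a type I point $b$ with $\lvert b - a\rvert \le s$, the net eventually lies in the annulus $D(a,R) \setminus \overline{D}(b,\rho)$, so eventually $\rho < \lvert\gamma_j - b\rvert < R$; letting $\rho \uparrow s$ and $R \downarrow s$ gives $\lvert\gamma_j - b\rvert \to s$. Taking $b$ in a prescribed finite tangent direction $\vec v$ (so $B(\vec v) = D(b,s)$) yields $(1)$; taking $b = a$ and using $t_j = \max(s,\lvert\gamma_j - a\rvert)$ yields $(2)$. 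If $\xi$ is of type IV, $(1)$ is vacuous, and for $(2)$: for each $k$ and each $\rho > r_k$ the net eventually lies in $D(a_k,\rho)$, whence $\lvert\gamma_j - a_k\rvert < \rho$; then for all $k' \ge k$ one has $\lvert a_{k'} - \gamma_j\rvert < \rho$ and $r_{k'} \le r_k < \rho$, so $\overline{D}(a_{k'},r_{k'}) \subseteq \overline{D}(\gamma_j,\rho)$, hence $\xi \in \overline{D}(\gamma_j,\rho)$ and $t_j \le \rho$; since $r_k \downarrow s$ this gives $\limsup_j t_j \le s$, and $t_j > s$ gives the reverse inequality, so $t_j \to s$.

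For the converse, assume $(1)$ and $(2)$ and let $W$ be any open neighborhood of $\xi$. It contains a basic open neighborhood, hence—by the neighborhood bases above—a set of the form $D(a,R) \setminus \bigcup_{i=1}^m \overline{D}(b_i,\rho_i)$ with $\rho_i < s < R$ and $b_i$ a type I point in a finite tangent direction $\vec v_i$ at $\xi$ (in the type IV case, a disc $D(a_k,\rho)$ with $\rho > r_k$). Now $(2)$ gives $t_j < R$ eventually, hence $\lvert\gamma_j - a\rvert < R$ eventually (for type II/III via $t_j = \max(s,\lvert\gamma_j - a\rvert)$; for type IV the estimate above shows $\lvert\gamma_j - a_k\rvert < \rho$ once $t_j < \rho$, whatever the relation of $k_0(\gamma_j)$ to $k$), while $(1)$ applied to each $\vec v_i$ gives $\lvert\gamma_j - b_i\rvert \to s > \rho_i$, hence $\gamma_j \notin \overline{D}(b_i,\rho_i)$ eventually. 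As there are finitely many conditions and $J$ is directed, $\gamma_j$ eventually lies in the set, hence in $W$; so $\lim_j \gamma_j = \xi$. Finally, ``some (equivalently any)'' in $(1)$ is justified by the ultrametric identity $\lvert\gamma_j - \gamma'\rvert = \lvert\gamma_j - \gamma\rvert$ whenever $\lvert\gamma - \gamma'\rvert < \lvert\gamma_j - \gamma\rvert$: once $\lvert\gamma_j - \gamma\rvert \to s$ and $\gamma,\gamma' \in B(\vec v)$ (so $\lvert\gamma - \gamma'\rvert < s$), this forces $\lvert\gamma_j - \gamma'\rvert \to s$ as well.

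The main obstacle is the bookkeeping in the converse direction: verifying that an arbitrary basic open neighborhood of a type II or III point contains one of the ``pants'' $D(a,R) \setminus \bigcup_i \overline{D}(b_i,\rho_i)$, and that these (together with the discs $D(a_k,\rho)$ in the type IV case) really do form neighborhood bases, requires using the uniquely-path-connected ``tree'' structure of $\mathbb{P}^{1,\text{an}}$ to describe connected components of complements of finite sets, and in the type IV case one must keep careful track of the representing nested sequence. Once those descriptions are in place, both implications reduce to the elementary ultrametric estimates sketched above.
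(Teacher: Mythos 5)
Your proof is correct and follows the same route as the paper, whose entire proof is the one-line remark that the statement follows from the fact that the basic open sets described in \S\ref{ssec:bpl} form a base for the topology of $\mathbb{A}^{1,\text{an}}_{\mathbb{K}}$. You have simply supplied the details the paper leaves implicit: the explicit neighborhood bases at type II/III points (annular ``pants'') and at type IV points (discs $D(a_k,\rho)$ with $\rho>r_k$), the reduction to the algebraically closed case via the Galois quotient, and the routine ultrametric estimates matching conditions $(1)$--$(2)$ to those neighborhoods.
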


\begin{proof}
This follows immediately from the fact that the basic open sets described in \S \ref{ssec:bpl} form a base for the topology of $\mathbb{A}^{1,\text{an}}_\mathbb{K}$.
\end{proof}

Note that condition (1) in Proposition \ref{prop:nets} holds for some finite direction $\vec{w}$ if condition (2) is satisfied and there exists $j_0 \in J$ such that $\gamma_j \not \in B(\vec{w})$ for all $j \geq j_0$.

\begin{proof}[Proof of the Non-archimedean $\lambda$-Lemma]
Since $U$ is covered by basic open sets and analyticity is a local property, we may assume without loss of generality that $U$ is itself a basic open set.  After applying a linear fractional transformation on $U$ and perhaps shrinking $U$ again, we may assume that $U$ has the form
\[
U = D(0,1/r) \setminus \left( \coprod_{i=1}^n \overline{D}(a_i,r) \right)
\]
with $r<1$ and each $a_i$ in a different tangent direction at the Gauss point $\zeta_{0,1}$.  After applying linear fractional transformations varying analytically with $\lambda$, we may assume that $0$ and $\infty$ are elements of $E$ and are kept fixed by the analytic motion.  

\textbf{Step 1: Extend the motion to type I points.}  Let $s \in \left\lvert \mathcal{H}(x_0)^{\times} \right\rvert$ be given.  Our first step is to show that each $\phi_\lambda$ is uniformly continuous on $E_s := \left\lbrace \alpha \in E \mid \left\lvert \alpha \right\rvert_{x_0} = s \right\rbrace$, using the absolute value on $\mathcal{H}(x_0)$ as the metric on $E$ and viewing the metric space $\mathcal{H}(\lambda) \subseteq \mathbb{P}^{1,\text{an}}_\lambda$ as the codomain.  For each $\alpha \in E$, denote $\psi_\alpha(\lambda) := (\pi_1 \circ \phi_\lambda)(\alpha).$  We also will show that the $\psi_\alpha$ for $\alpha \in E_s$ converge uniformly on compact subsets of $U$ to the $\psi_\gamma$ for type I points $\gamma$ in the closure of $E_s$.

We claim that $\left\lvert \psi_{\alpha}(\zeta_{0,1}) \right\rvert$ can take on at most two values for all $\alpha \in E_s$.  One possibility is that $\left\lvert \psi_{\alpha}(\zeta_{0,1}) \right\rvert = \left\lvert \psi_{\alpha}(x_0) \right\rvert = s$, which is only possible if $s \in \left\lvert K^{\times} \right\rvert$.  Choose $\sigma \in K$ with $\left\lvert \sigma \right\rvert = s$ in this case.  This is of course the only possibility if $x_0 = \zeta_{0,1}$.  Otherwise, any $\psi_\alpha$ with $\left\lvert \psi_\alpha \right\rvert$ non-constant must have the same Weierstrass degree on the annulus $A \subseteq U$ with one boundary point at $\zeta_{0,1}$ containing $x_0$ (otherwise the difference of two such $\psi_\alpha$ would have a zero on this annulus contradicting the injectivity of the $\phi_\lambda$).  Also, those $\psi_\alpha$ must all map $\zeta_{0,1}$ to the segment between $0$ and $\infty$ by Proposition \ref{prop:basicopen}(b).  Therefore, those $\psi_\alpha$ must map $\zeta_{0,1}$ to the same point on the this segment, say $\zeta_{0,s_1}$.  Choose $\sigma_1 \in K$ with $\left\lvert \sigma_1 \right\rvert = s_1$ if such $\psi_{\alpha}$ exist.  Thus, for each $\alpha \in E_s$, there exists $\sigma_{\alpha} \in \left\lbrace \sigma, \sigma_1 \right\rbrace$ such that $\left\lvert \psi_{\alpha}(\zeta_{0,1}) \right\rvert = \left\lvert \sigma_{\alpha} \right\rvert$ (where the subset $\left\lbrace \sigma, \sigma_1 \right\rbrace$ only has one element if $\sigma$ and $\sigma_1$ have not both been chosen).  The reduction $\widetilde{\psi_\alpha/\sigma_{\alpha}}$ is a rational function with coefficients in the residue field $\widetilde{K}$.  Moreover, injectivity of the $\phi_\lambda$ and the fact that $0$ and $\infty$ are kept constant by the analytic motion implies that the non-constant reductions and all the differences between non-constant reductions $\widetilde{\psi_\alpha/\sigma_{\alpha}}$ with the same $\sigma_{\alpha}$ value can only have zeros and poles in the set $\left\lbrace \infty \right\rbrace \cup \left\lbrace \widetilde{a_i} \mid 1 \leq i \leq n \right\rbrace$.  Applying Lemma $\ref{lem:sunitdiff}$ with  $L=\widetilde{K}(T)$ and $S = \left\lbrace (1/T) \right\rbrace \cup \left\lbrace (T-\widetilde{a_i})  \mid 1 \leq i \leq n \right\rbrace$, we conclude that
\[
\sup_{\alpha \in E_s, ~\sigma_{\alpha} = \sigma} \text{deg}(\widetilde{\psi_\alpha/\sigma}) < \infty \hspace{20 pt} \text{and} \hspace{20pt} \sup_{\alpha \in E_s, ~\sigma_{\alpha} = \sigma_1 } \text{deg}(\widetilde{\psi_\alpha/\sigma_1}) < \infty.
\]
Combining these degree bounds and using Proposition \ref{prop:basicopen}(d), we obtain that there exists $R \geq 0$ such that
\[
\bigcup_{\alpha \in E_s} \text{im}(\psi_\alpha) \subseteq \overline{D}(0,R).
\]
Note that this also implies that
\[
\bigcup_{\alpha,\beta \in E_s} \text{im}(\psi_\alpha-\psi_\beta) \subseteq \overline{D}(0,R).
\]

Now, let a compact subset $W$ of $U$ be given.  Without loss of generality, we may assume that $W$ has the form
\[
W = \overline{D}(0,1/u) \setminus \left( \coprod_{i=1}^n D(a_i,u) \right)
\]
with $u > r$.  Let $\epsilon > 0$ be given.  Choose $N$ sufficiently large that $(u/r)^N > R/\epsilon$.  Take $\delta = \epsilon u^{2N(n+1)}$, and let $\alpha, \beta \in E_s$ with $\left\lvert \alpha-\beta \right\rvert_{x_0} < \delta$ be given.  Note that by injectivity of the $\phi_\lambda$, we have that $\psi_{\alpha, \beta} := \psi_\alpha - \psi_\beta$ has equal inner and outer Weierstrass degrees on annuli contained in $U$ with one endpoint at $\zeta_{0,1}$.  Denote $s_2 := \left\lvert \psi_{\alpha,\beta}(\zeta_{0,1}) \right\rvert$ and choose $\sigma_2 \in K$ with $\left\lvert \sigma_2 \right\rvert = s_2$.  Then, these Weierstrass degrees are exactly the orders of vanishing of $\widetilde{\psi_{\alpha, \beta}/\sigma_2}$ at the corresponding points of $\widetilde{K}$ by Proposition \ref{prop:basicopen}(c).

\textbf{Case 1:} One of these Weierstrass degrees has absolute value larger than $N(n+1)$.  Then, the degree of $\widetilde{\psi_{\alpha, \beta}/\sigma_2}$ is also larger than $N(n+1)$.  Note that $\psi_{\alpha, \beta}$ has no poles, and therefore $\widetilde{\psi_{\alpha, \beta}/\sigma_2}$ can map only $\widetilde{\infty}$ and the $\widetilde{a_i}$ to $\widetilde{\infty}$.  Since the total number of poles counted with multiplicity is the degree of $\widetilde{\psi_{\alpha, \beta}/\sigma_2}$ and there are most $n+1$ poles, we obtain that the largest multiplicity of a pole is at least $N(n+1)/(n+1)=N$.  Assume the maximum is attained at some $\widetilde{a_j}$; the case of $\widetilde{\infty}$ is similar.  From above, $\text{im}(\psi_\alpha-\psi_\beta) \subseteq \overline{D}(0,R)$, which implies that $\psi_{\alpha, \beta}(\zeta_{a_j,r}) \in \overline{D}(0,R)$, using the continuous extension of $\psi_{\alpha,\beta}$ to $\overline{U}$ from Proposition \ref{prop:basicopen}(a).  Since $\psi_{\alpha, \beta}$ has Weierstrass degree $\leq -N$ on the open annulus between $\zeta_{a_j,u}$ and $\zeta_{a_j,r}$, we may use Propostion \ref{prop:scaling} to compute
\begin{align*}
d_\mathbb{H} \left( \psi_{\alpha, \beta}(\zeta_{a_j,r}), \psi_{\alpha, \beta}(\zeta_{a_j,u}) \right) & \geq N d_\mathbb{H} \left( \zeta_{a_j,r}, \zeta_{a_j,u} \right) = N (\log r -\log u) \\
&> \log(R/\epsilon),
\end{align*}
which implies that $\psi_{\alpha, \beta}(\zeta_{a_j,u}) \in \overline{D}(0,\epsilon)$.  Moreover, by our choice of $a_j$, we have that no points of $\psi_{\alpha, \beta}(W)$ are contained in the direction toward $\infty$ at $\psi_{\alpha, \beta}(\zeta_{a_j,u})$ (see the proof of Proposition \ref{prop:basicopen}(d)).  We conclude that 
\[
\left\lvert \phi_x(\alpha)-\phi_x(\beta) \right\rvert_x = \left\lvert (\psi_\alpha - \psi_\beta)(x) \right\rvert < \epsilon
\]
for all $x \in W$.

\textbf{Case 2:} All these Weierstrass degrees have absolute value less than $N(n+1)$.  If they are all $0$, then $\left\lvert \psi_{\alpha,\beta} \right\rvert$ is constant and we are finished.  Otherwise since the hyperbolic diameter of $\partial W$ is $\log(1/u^2)$, we have that the hyperbolic diameter of $\partial \psi_{\alpha,\beta}(W) \cap [0, \infty]$ is less than or equal to $N(n+1) \log(1/u^2)$, where $[0, \infty]$ is the segment from $0$ to $\infty$ in $\mathbb{P}^{1,\text{an}}$.  Also, using that
\[
\left\lvert \psi_{\alpha,\beta}(x_0) \right\rvert = \left\lvert \alpha-\beta \right\rvert_{x_0} < \delta,
\]
we obtain that there is a point $\xi_1 \in \partial \psi_{\alpha,\beta}(W) \cap [0, \infty]$ with $\text{diam}(\xi_1) < \delta$.  Combining these facts, we compute that the boundary point $\xi_2$ of $\psi_{\alpha,\beta}(W)$ closest to $\infty$ satisfies
\begin{align*}
\log \text{diam}(\xi_2) &\leq \log \text{diam}(\xi_1) + d_\mathbb{H}(\xi_1,\xi_2) < \log \delta+ N(n+1) \log(1/u^2) \\
&= \log \epsilon +2N(n+1) \log(u) - 2N(n+1) \log(u) \\
&= \log \epsilon.
\end{align*}
Again, we conclude that
\[
\left\lvert \phi_x(\alpha)-\phi_x(\beta) \right\rvert_x = \left\lvert (\psi_\alpha - \psi_\beta)(x) \right\rvert < \epsilon
\]
for all $x \in W$.

Thus, each $\phi_\lambda$ can be extended to a continuous function on $\overline{E_s} \cap \mathbb{P}^1(\mathcal{H}(x_0))$, and since the convergence is uniform on compact subsets of $U$, the extension varies analytically with $\lambda$.  Since $s \in \mathbb{R}_+$ was arbitrary, the same holds for $\overline{E} \cap \mathbb{P}^1(\mathcal{H}(x_0))$.

\textbf{Step 2: Extend the motion to non-type I points.}  Our next step is to extend the motion to non-type I points in $\overline{E}$.  Since $\overline{E}$ is not in general a metric space, we will need to use the language of nets for this.  Let $\eta \in \overline{E} \setminus \mathbb{P}^1(\mathcal{H}(x_0))$ be given, and let $\left\lbrace \alpha_j \right\rbrace_{j \in J}$ be a net of elements of $E$ indexed by some directed set $(J, \leq)$ that converges to $\eta$.  Let $x \in U$ be given.  The goal is to show that the net $\left\lbrace \phi_x (\alpha_j) \right\rbrace_{j \in J}$ converges in $\mathbb{A}^{1, \text{an}}_x$.  There are four cases.  In each case, we will use the notation $r_0 := \min_{1 \leq i \leq n} \left\lbrace \left\lvert T - a_i \right\vert_{x_0}, 1/ \left\lvert x_0 \right\rvert \right\rbrace$ and
\[
\rho :=
\begin{cases}
\max(r/r_0,r_0) &\text{ if } r_0<1 \\
r &\text{ if } r_0=1,
\end{cases}
\]
but otherwise notation introduced in one case will not carry over into the next.

\textbf{Case 1:} $\left\lbrace \alpha_j \right\rbrace$ approaches $\eta$ strictly from above.  By this, we mean either that $\eta$ is type IV or that no $\alpha_j$ is contained in the closed disc associated to a type II or III $\eta$.  In this case, denoting
\[
t_j := \min \left\lbrace \tau \in \mathbb{R}_{>0} \mid \eta \in \overline{D}(\alpha_j,\tau) \right\rbrace
\]
and $t := \text{diam}(\eta)$, we have $\lim_{j \in J} t_j = t$ by Proposition \ref{prop:nets} and $t_j > t$ for all $j$.  Because of this, for each $j \in J$, there exists $\ell_j \in J$ such that $\ell_j \geq j$ and $t_k < t_j$ for all $k \geq \ell_j$.  It follows that $\left\lvert \alpha_j-\alpha_k \right\rvert_{x_0} = t_j$ for all $j \in J$ and $k \geq \ell_j$.  Since $\lim_{j \in J} t_j = t$, we can also find $j_0 \in J$ such that $\rho t_j < t$ for all $j \geq j_0$.  Then, we have that $\left\lvert (\psi_{\alpha_j}-\psi_{\alpha_k})(\zeta_{0,1}) \right\rvert = \left\lvert (\psi_{\alpha_j}-\psi_{\alpha_{\ell}})(\zeta_{0,1}) \right\rvert$ for all $j \geq j_0$ and $k, \ell \geq \ell_j$, since otherwise $\psi_{\alpha_j}-\psi_{\alpha_k}$ and $\psi_{\alpha_j}-\psi_{\alpha_{\ell}}$ would have different Weierstrass degrees on the annulus in the direction of $x_0$ contained in $U$ which would imply that $\psi_{\alpha_k}-\psi_{\alpha_{\ell}}$ has a zero on this annulus.  Denote by $s_j$ this common value.  Now, let $j \geq j_0$, $k \geq \ell_j$, and $\ell \geq \ell_k$ be given.  Applying Lemma \ref{lem:injective} to the analytic functions $\psi_{\alpha_j}-\psi_{\alpha_{\ell}}$ and $\psi_{\alpha_k}-\psi_{\alpha_{\ell}}$, we obtain that $s_k/ s_j = t_k/t_j <1$, and hence
\[
\left(\left(\psi_{\alpha_j}-\psi_{\alpha_{\ell}} \right)/s_j\right)^{\sim} - \left(\left(\psi_{\alpha_j}-\psi_{\alpha_{k}} \right)/s_j\right)^{\sim} = \widetilde{s_k/s_j} \left(\left(\psi_{\alpha_k}-\psi_{\alpha_{\ell}} \right)/s_k\right)^{\sim} = 0.
\]
Therefore,
\[
\left(\frac{\psi_{\alpha_j}-\psi_{\alpha_{\ell}}}{\psi_{\alpha_j}-\psi_{\alpha_k}}\right)^{\sim} = 1
\]
and hence the quotient inside the parentheses has Weierstrass degree zero.  We conclude that
\[
\sigma_j := \left\lvert (\psi_{\alpha_j}-\psi_{\alpha_k})(x) \right\rvert = \left\lvert \phi_x(\alpha_j)-\phi_x(\alpha_k) \right\rvert_x
\]
is independent of $k \geq \ell_j$.  Lemma \ref{lem:injective} applied to the same analytic functions also shows that $\sigma_k/\sigma_j = t_k/t_j$ for all $j \geq j_0$ and $k \geq \ell_j$.  Thus, $\left\lbrace \sigma_j \right\rbrace_{j \in J}$ is Cauchy and $\sigma := \lim\limits_{j \in J} \sigma_j > 0$.  It follows from Proposition \ref{prop:nets} that the net $\left\lbrace \phi_x(\alpha_j) \right\rbrace_{j \in J}$ converges in $\mathbb{A}^{1,\text{an}}_x$ to a type II, III, or IV point with $\text{diam} = \sigma$.

\textbf{Case 2:} There is exactly one direction below $\eta$ containing elements of the net $\left\lbrace \alpha_j \right\rbrace_{j \in J}$.  By this, we mean that $\eta$ is type II or type III, there is some element $\alpha$ of the net contained in the associated closed disc, and $\left\lvert \alpha_j-\alpha \right\rvert_{x_0} \neq \text{diam}(\eta)$ for all $j \in J$.  Then, denoting $t := \text{diam}(\eta)$, we have $\eta = \zeta_{\alpha, t}$.  Denote $t_j := \left\lvert \alpha_j-\alpha \right\rvert$ for $j \in J$.  By Proposition \ref{prop:nets}, we have that $\lim\limits_{j \in J} t_j = t$.  Because of this, we can find $j_0 \in J$ such that
\[
\left\lvert \log(t_j)-\log(t) \right\rvert < \log(1/\rho)
\]
for all $j \geq j_0$, and for each $j \in J$, we can find $\ell_j \in J$ such that $\left\lvert t_k-t \right\rvert < \left\lvert t_j-t \right\rvert$ for all $k \geq \ell_j$.  Let $j \geq j_0$ and $k \geq \ell_j$ be given and apply Lemma \ref{lem:injective} to the analytic functions $\psi_{\alpha_j}-\psi_{\alpha}$ and $\psi_{\alpha_k}-\psi_{\alpha}$.  Denoting
\[
\sigma_j := \left\lvert (\psi_{\alpha_j}-\psi_{\alpha})(x) \right\rvert = \left\lvert \phi_x(\alpha_j)-\phi_x(\alpha) \right\rvert_x,
\]
we obtain that $\sigma_k/\sigma_j = t_k/t_j$ for all $j \geq j_0$ and $k \geq \ell_j$.  This shows that $\left\lbrace \sigma_j \right\rbrace_{j \in J}$ is Cauchy and $\sigma := \lim\limits_{j \in J} \sigma_j > 0$.  It follows from Proposition \ref{prop:nets} that the net $\left\lbrace \phi_x(\alpha_j) \right\rbrace_{j \in J}$ converges in $\mathbb{A}^{1,\text{an}}_x$ to $\zeta_{\phi_x(\alpha),\sigma}$.

\textbf{Case 3:} There are at least two directions below $\eta$ containing elements of the net $\left\lbrace \alpha_j \right\rbrace_{j \in J}$ and every direction at $\eta$ is eventually excluded.  By this, we mean that $\eta$ is type II or type III and for every direction $\vec{w}$ at $\eta$ there exists $j_0 \in J$ such that $\alpha_j \not\in B(\vec{w})$ for all $j \geq j_0$.  Choose $\alpha, \beta$ in the net that are in distinct directions at $\eta$ and are not in the direction towards $\infty$.  After applying a linear fractional transformation depending analytically on $\lambda$, we may assume that $\alpha = 0$, $\beta = 1$, $\infty \in E$, and $0,1, \infty$ are fixed by the motion.  Note that after this change of coordinates, we have $\eta = \zeta_{0,1} \in \mathbb{A}^{1,\text{an}}_{x_0}$.  We will continue to use the notation $\eta$ for this point to distinguish from the point $\zeta_{0,1} \in \mathbb{A}^{1,\text{an}}$.

Now, applying Lemma \ref{lem:sunitdiff} to the set $A := \left\lbrace \widetilde{\psi_{\alpha_j}} \mid j \in J \right\rbrace \setminus \left\lbrace 0, \infty \right\rbrace$ with $u_0 = 1$, we obtain that $A$ is either finite or consists only of constants in $\widetilde{K}^\times$.  

Suppose for the sake of a contradiction that for every $j_0 \in J$ there exists $j \geq J$ such that $\widetilde{\psi_{\alpha_j}}$ is non-constant.  Then, $A$ is finite, so by the pigeonhole principle there is a non-constant element $v \in A$ such that for every $j_0 \in J$ there exists $j \geq j_0$ with $\widetilde{\psi_{\alpha_j}} = v$.  Choose some $k \in J$ with $\widetilde{\psi_{\alpha_k}} = v$.  Then, for every $j_0 \in J$, there exists $j \geq j_0$ such that $\widetilde{\psi_{\alpha_j} - \psi_{\alpha_k}} = 0$ and hence such that
\[
\left\lvert \alpha_j - \alpha_k \right\rvert_{x_0} = \left\lvert (\psi_{\alpha_j}-\psi_{\alpha_k})(x_0) \right\rvert < 1 = \text{diam}(\eta).
\]
This contradicts the assumption that the net $\left\lbrace \alpha_j \right\rbrace$ eventually avoids the direction towards $\alpha_k$ at $\eta$.

Thus, after replacing $J$ with a subset of the form $\left\lbrace j \geq j_0 \right\rbrace$, we may assume that $\widetilde{\psi_{\alpha_j}}$ is constant for all $j \in J$.  It follows that $\psi_{\alpha_j}(U)$ is contained in a connected component of $\mathbb{A}^{1,\text{an}} \setminus \left\lbrace \zeta_{0,1} \right\rbrace$ for all $j \in J$.  Therefore, for all $j,k \in J$, we have that $\alpha_j$ and $\alpha_k$ are in the same connected component of $\mathbb{A}^{1,\text{an}}_{x_0} \setminus \left\lbrace \eta \right\rbrace$ if and only if $\phi_x(\alpha_j)$ and $\phi_x(\alpha_k)$ are in the same connected component of $\mathbb{A}^{1,\text{an}}_x \setminus \left\lbrace \zeta_{0,1} \right\rbrace$.  Since $\left\lbrace \alpha_j \right\rbrace_{j \in J}$ eventually avoids all directions at $\eta$, this means that $\left\lbrace \phi_x(\alpha_j) \right\rbrace_{j \in J}$ eventually avoids all directions at the point $\zeta_{0,1}$ in $\mathbb{A}^{1,\text{an}}_x$.  We conclude that $\left\lbrace \phi_x(\alpha_j) \right\rbrace_{j \in J}$ converges to the point $\zeta_{0,1}$ in $\mathbb{A}^{1,\text{an}}_x$ by the remark following Proposition \ref{prop:nets}.

\textbf{Case 4:} There are at least two directions below $\eta$ containing elements of the net $\left\lbrace \alpha_j \right\rbrace_{j \in J}$ and there is a direction at $\eta$ that is not eventually excluded.  By this, we mean that $\eta$ is type II or type III and there exist elements $\alpha$ and $\beta$ of the net that are contained in the closed disc associated to $\eta$ such that $\left\lvert \alpha-\beta \right\rvert_{x_0} = \text{diam}(\eta)$ and there is a direction at $\eta$ that does not satisfy the condition stated in Case 3.  After possibly choosing a new $\alpha$ and $\beta$, we may assume that $\alpha$ is in a direction that is not eventually excluded.  As in Case 3, apply a linear fractional transformation depending analytically on $\lambda$ so that $\alpha = 0$, $\beta = 1$, $\infty \in E$, and $0, 1, \infty$ are fixed by the motion.  Then, the condition that the direction towards $\alpha = 0$ is not eventually excluded can be stated as: for every $j_1 \in J$, there exists $j \geq j_1$ such that $\left\lvert \alpha_j \right\rvert_{x_0} < 1$.  Moreover, by Proposition $\ref{prop:nets}$, we have that $\lim\limits_{j \in J} \left\lvert \alpha_j \right\rvert_{x_0}= 1$.

Therefore, we can find $j_0 \in J$ such that
\[
\rho < \left\lvert \alpha_k \right\rvert_{x_0} < 1/\rho
\]
for all $k \geq j_0$ and such that
\[
\rho < \left\lvert \alpha_{j_0} \right\rvert_{x_0} < 1.
\]
Then, for any $k \geq j_0$ such that $\left\lvert \alpha_k \right\rvert_{x_0} \neq 1$, we can apply Lemma $\ref{lem:injective}$ to the analytic functions $\psi_{\alpha_k}$ and $1$, to obtain that $\left\lvert \psi_{\alpha_k}(\lambda) \right\rvert / \left\lvert \psi_{\alpha_k}(x_0) \right\rvert = 1$ for all $\lambda \in U$.  In particular, this applies to $\psi_{\alpha_{j_0}}$.  Using this, for any $\alpha_j$ with $\left\lvert \alpha_j \right\rvert_{x_0} = 1$, we can apply Lemma $\ref{lem:injective}$ to the analytic functions $\psi_{\alpha_j}$ and $\psi_{\alpha_{j_0}}$ to obtain that
\begin{equation} \label{eq:case4}
\frac{\left\lvert \psi_{\alpha_{j}}(\lambda) \right\rvert}{\left\lvert \psi_{\alpha_{j}}(x_0) \right\rvert} = \frac{\left\lvert \psi_{\alpha_{j_0}}(\lambda) \right\rvert}{\left\lvert \psi_{\alpha_{j_0}}(x_0) \right\rvert} = 1 \tag{*}
\end{equation}
for all $\lambda \in U$.  This means that $\psi_{\alpha_j}(U)$ is contained in a connected component of $\mathbb{A}^{1,\text{an}} \setminus \left\lbrace \zeta_{0,1} \right\rbrace$ for all $\alpha_j$ with $\left\lvert \alpha_j \right\rvert_{x_0} = 1$.  Above, we showed that this is also true for $\left\lvert \alpha_k \right\rvert_{x_0} \neq 1$ provided that $k \geq j_0$.  Therefore, after replacing $J$ by the subset $\left\lbrace j \in J \mid j \geq j_0 \right\rbrace$, we have that $\psi_{\alpha_j}(U)$ is contained in a connected component of $\mathbb{A}^{1,\text{an}} \setminus \left\lbrace \zeta_{0,1} \right\rbrace$ for all $j \in J$.  As in Case 3, it follows that for any $j,k \in J$,  $\alpha_j$ and $\alpha_k$ are in the same connected component of $\mathbb{A}^{1,\text{an}}_{x_0} \setminus \left\lbrace \eta \right\rbrace$ if and only if $\phi_x(\alpha_j)$ and $\phi_x(\alpha_k)$ are in the same connected component of $\mathbb{A}^{1,\text{an}}_x \setminus \left\lbrace \zeta_{0,1} \right\rbrace$.

Now, we will use Proposition $\ref{prop:nets}$ to show that $\left\lbrace \phi_x(\alpha_j) \right\rbrace_{j \in J}$ converges to the point $\zeta_{0,1}$ in $\mathbb{A}^{1,\text{an}}_x$.  By the above computation, we have
\[
\lim\limits_{j \in J} \left\lvert \phi_x(\alpha_j) \right\rvert_x = \lim\limits_{j \in J} \left\lvert \psi_{\alpha_j}(x) \right\rvert = \lim\limits_{j \in J} \left\lvert \psi_{\alpha_j}(x_0) \right\rvert = \lim\limits_{j \in J} \left\lvert \alpha_j \right\rvert_{x_0} = 1.
\]
This shows  that condition (1) for the direction towards $0$ and condition (2) of Proposition $\ref{prop:nets}$ are satisfied.  We can also apply the remark after Proposition $\ref{prop:nets}$ to conclude that condition (1) is satisfied for any direction at the point $\zeta_{0,1,} \in \mathbb{A}^{1,\text{an}}_x$ that contains no elements of the net $\left\lbrace \phi_x(\alpha_j) \right\rbrace_{j \in J}$.  Therefore, it remains to show that condition (1) is satisfied for any direction $\vec{w}$ at $\zeta_{0,1,} \in \mathbb{A}^{1,\text{an}}_x$ that is not the direction towards $\infty$ or the direction towards $0$ and such that there is some $j(\vec{w}) \in J$ with $\phi_x(\alpha_{j(\vec{w})}) \in B(\vec{w})$.  Note that by the above remark about connected components, $\alpha_{j(\vec{w})}$ is not in the direction towards $\infty$ or the direction towards $0$ at $\eta$, and hence $\left\lvert \alpha_{j(\vec{w})} \right\rvert_{x_0} = 1$.  Thus, Proposition \ref{prop:nets} applied to the net $\left\lbrace \alpha_j \right\rbrace_{j \in J}$ and the direction towards $\alpha_{j(\vec{w})}$ at $\eta$ yields that $\lim\limits_{j \in J} \left\lvert \alpha_j-\alpha_{j(\vec{w})} \right\rvert_{x_0} = 1$.  This means that there exists $k(\vec{w}) \in J$ such that
\[
\rho < \left\lvert \alpha_j - \alpha_{j(\vec{w})} \right\rvert_{x_0} < 1/\rho
\]
for all $j \geq k(\vec{w})$.  If $\left\lvert \alpha_j - \alpha_{j(\vec{w})} \right\rvert_{x_0} = 1$, then $\left\lvert \phi_x(\alpha_j)-\phi_x(\alpha_{j(\vec{w})}) \right\rvert_x = 1$ by the above remark about connected components.  If $\left\lvert \alpha_j - \alpha_{j(\vec{w})} \right\rvert_{x_0} \neq 1$ and $j \geq k(\vec{w})$, then Lemma \ref{lem:injective} applied to the analytic functions $\psi_{\alpha_{j(\vec{w})}}-\psi_{\alpha_j}$ and $\psi_{\alpha_{j(\vec{w})}}$ gives
\[
\frac{\left\lvert (\psi_{\alpha_j}-\psi_{\alpha_{j(\vec{w})}})(x) \right\rvert}{\left\lvert \psi_{\alpha_{j(\vec{w})}}(x) \right\rvert} = \frac{\left\lvert \alpha_j-\alpha_{j(\vec{w})} \right\rvert_{x_0}}{\left\lvert \alpha_{j(\vec{w})} \right\rvert_{x_0}} = \left\lvert \alpha_j-\alpha_{j(\vec{w})} \right\rvert_{x_0}.
\]
Also, by (\ref{eq:case4}) we have $\left\lvert \psi_{\alpha_{j(\vec{w})}}(x) \right\rvert = 1$.  Therefore,
\[
\left\lvert \phi_x(\alpha_j)-\phi_x(\alpha_{j(\vec{w})}) \right\rvert_x = \left\lvert (\psi_{\alpha_j}-\psi_{\alpha_{j(\vec{w})}})(x) \right\rvert = \left\lvert \alpha_j-\alpha_{j(\vec{w})} \right\rvert_{x_0}
\]
for all $j \geq k(\vec{w})$.  We conclude that $\lim\limits_{j \in J} \left\lvert \phi_x(\alpha_j)-\phi_x(\alpha_{j(\vec{w})}) \right\rvert_x = 1$ which gives condition (1) of Proposition \ref{prop:nets}.

\textbf{Step 3: Injectivity.}  The first two steps show that we can extend the motion and that the extensions of $\phi_\lambda$ are continuous.  Uniqueness follows immediately from the fact that $E$ is dense in $\overline{E}$.  It remains to show that the extensions are injective.  For this, we can apply the same argument to extend the inverse maps (this is why proving the result for arbitrary $x_0 \in U$ was important).  The extensions of the inverse maps must continue to satisfy $\phi_\lambda \circ \phi_\lambda^{-1} = \phi_\lambda^{-1} \circ \phi_\lambda = \text{id}$, since these compositions are continuous and $E$ is dense in $\overline{E}$.
\end{proof}

\subsection{Proof of Theorem \ref{thm:main}} \label{ssec:pfmain}
We recall the statement of Theorem \ref{thm:main}.
\mainthm*

\begin{proof}
(1) $\Rightarrow$ (2):  Suppose for the sake of a contradiction that the Julia sets $J_\lambda$ move analytically on $U$ and (2) is not satisfied.

\textbf{Case 1:} $f_x$ has an unstably indifferent periodic point $\xi$ for some $x \in U$.  Choose an affinoid neighborhood $V := \mathcal{M}(\mathscr{A})$ of $x$ in $U$, which we may take to be the closure of a basic open set.  Passing to an iterate of the family, we may assume that $\xi$ is a fixed point.  After a change of coordinates, we may assume $\xi \neq \infty_x$.  Then, $\xi$ lies on the Berkovich $1$-period curve $\mathcal{C} := \mathcal{C}_1 \left(\left\lbrace f_\lambda \right\rbrace_{\lambda \in V} \right) \subseteq \mathbb{P}^{1,\text{an}} \times V$.  Also, $\pi_2(\xi) = x$ and $f_\lambda'(\xi) = \zeta_{0,1}$.  Since the coefficients of $f_\lambda(z)$ are analytic functions on the closed affinoid $V$, they must have finite topological degree on $V$.  Using this, one can check that
\[
f_\lambda ' : \mathcal{C} \longrightarrow \mathbb{P}^{1,\text{an}}
\]
is a finite morphism between Berkovich curves.  Since $\mathbb{P}^{1,\text{an}}$ is irreducible, Theorem \ref{thm:openmap} then yields that $f_\lambda ' : \mathcal{C} \longrightarrow \mathbb{P}^{1,\text{an}}$ is an open map.  Likewise, $ \pi_2: \mathcal{C} \longrightarrow V$ is an open map.  Choose a connected open neighborhood $W_1$ of $x$ in $V$ such that $\xi$ is the only preimage under $\pi_2$ of $x$ in its connected component $\Omega$ of $\left( \pi_2 \restrict{\mathcal{C}} \right)^{-1} (W_1)$.  Denote $W := f_\lambda ' \left( \Omega \right)$, which is open by the above remark.  In particular, since $\zeta_{0,1} \in W$, we have that $W$ also contains points outside $\overline{D}(0,1)$, and therefore there exists $\xi_1 \in \Omega$ such that $ \left\lvert f_\lambda ' ( \xi_1 ) \right\rvert > 1 $ .  Denote $x_1 := \pi_2(\xi_1)$.  Note that $\xi_1$ is a repelling fixed point for $f_{x_1}$ and hence $\xi_1 \in J_{x_1}$.  Since the Julia sets move analytically, we have a family of homeomorphisms
\[
\phi_\lambda: J_{x_1} \longrightarrow J_\lambda
\]
with $ f_\lambda \circ \phi_\lambda = \phi_\lambda \circ f_{x_1} $ on $J_{x_1}$.  Moreover, defining
\begin{align*}
\psi: V& \longrightarrow \mathbb{P}^{ 1,\text {an} } \times V \\
\lambda & \longmapsto \phi_\lambda ( \xi_1 ), 
\end{align*}
we know that $ \psi $ is analytic section of $\pi_2: \mathbb{P}^{1,\text{an}} \times V \longrightarrow V$, so its image consists of fiberwise type I points by Proposition \ref{prop:ansec}.  Furthermore, since $\xi_1 \in \mathcal{C}$, i.e. $\xi_1$ is a fiberwise type I fixed point, we have that the image of $\psi$ is contained in $\mathcal{C}$ as well by the conjugacy condition in the definition of an analytic motion of Julia sets.  Indeed, we obtain that $\psi$ is an analytic section for $\pi_2: \mathcal{C} \longrightarrow V$, so $\psi(W_1) \subseteq \left( \pi_2 \restrict{\mathcal{C}} \right)^{-1} (W_1)$.  Since $W_1$ is connected and $\psi$ is continuous, this means that $\psi(W_1) \subseteq \Omega$.  Now, by construction $\xi$ is the unique preimage under $\pi_2$ of $x$ in $\Omega$, which implies that $\psi(x) = \xi$.  We observe that
\[
\xi = \psi(x) = \phi_x (\xi_1) \in \text{im}(\phi_x ) = J_x.
\]
However, $\xi$ is an indifferent type I periodic point for $f_x$ hence is not contained in $J_x$.  This is the desired contradiction.

\textbf{Case 2:} $f_y$ has a type I repelling point $\eta$ with multiplicity $m \geq 2$ for some $y \in U$.  By Lemma $\ref{lem:ppcol}$, we see that $\eta$ is a type I point that is not contained in $\mathbb{P}^{1}(\mathcal{H}(y))$.  However, $\eta$ is also in the image of an analytic section
\[
U \longrightarrow \mathbb{P}^{1,\text{an}} \times U
\]
of $\pi_2$ (obtained from the analytic motion of $\eta$ as in Case 1), contradicting Proposition \ref{prop:ansec}.
\end{proof}

For the reverse implication, our main tool will be the Non-archimedean $\lambda$-Lemma.

\begin{proof}[Proof of Theorem \ref{thm:main} continued]
(2) $\Rightarrow$ (1): The strategy is to first show that the type I repelling periodic points move analytically and then to employ the Non-archimedean $ \lambda $-Lemma to extend the motion to the entire Julia set.  For the first step, let $n \geq 1$ be given and consider the Berkovich $n$-period curve $\mathcal{C} := \mathcal{C}_n(\left\lbrace f_\lambda \right\rbrace)$.  From (2), the map $\pi_2: \mathcal{C} \longrightarrow U$ has multiplicity $1$ at each type I repelling point.  By continuity of $\left\lvert f'_\lambda \right\rvert$, the subset $W$ of type I repelling points on $\mathcal{C}$ is an open set.  Applying Lemma \ref{lem:monic} and Lemma \ref{lem:ift}, we see that $\pi_2: W \longrightarrow U$ is a local isomorphism of Berkovich spaces.  Moreover, because there are no unstably indifferent periodic points, the number of type I repelling periodic points of period dividing $n$ for $f_\lambda$ is constant as $\lambda$ varies in $U$.  This means that $\pi_2 \restrict{W}$ is a local homeomorphism from a Hausdorff space to a connected Hausdorff space whose fibers all have the same finite cardinality $N$, and hence it is a topological covering map.  Since $U$ is simply connected, this covering must be trivial.  Thus, we obtain isomorphisms of Berkovich spaces from $U$ to each of the $N$ connected components of $W$, inverting $\pi_2$.  This gives an analytic motion $\left\lbrace \phi_\lambda \right\rbrace$ of the type I repelling periodic points of period dividing $n$, parametrized by $(U,x_0)$.  Moreover, for each type I repelling point $\beta \in J_{x_0}$ of period dividing $n$, we have that $\lambda \mapsto (f_\lambda \circ \phi_\lambda)(\beta)$ and $\lambda \mapsto (\phi_\lambda \circ f_{x_0})(\beta)$ are analytic and hence continuous on $U$, and therefore each maps $U$ into a connected component of $W$.  Thus, since they are both sections of $\pi_2 \restrict{W}$ and agree at the point $\lambda = x_0$, they must agree on all of $U$.  Since this works for all $n$, we obtain the desired analytic motion of all type I repelling periodic points that also satisfies $f_\lambda \circ \phi_\lambda = \phi_\lambda \circ f_{x_0}$.

Since the type I repelling periodic points of $f_{x_0}$ are dense in $J_{x_0}$, applying the Non-archimedean $\lambda$-Lemma yields an analytic motion $\left\lbrace \phi_\lambda \right\rbrace$ of $J_{x_0}$, $\phi_\lambda: J_{x_0} \longrightarrow \mathbb{P}^{1,\text{an}}_{\lambda}$.  The additional condition $f_\lambda \circ \phi_\lambda = \phi_\lambda \circ f_{x_0}$ continues to hold by continuity.  This condition shows that $\phi_{\lambda}(J_{x_0})$ contains the backward orbit of any type I repelling periodic point of $f_{\lambda}$ and hence is equal to $J_{\lambda}$ by \cite{BR10}*{Corollary 10.58}.
\end{proof}

\section*{Acknowledgements}
The author would like to thank his advisor Joe Silverman for all his advice and Rob Benedetto, Laura DeMarco, and Ken Jacobs for helpful discussions.

\begin{bibdiv}
\begin{biblist}*{labels={alphabetic}}
\bib{BR10}{book}{
  author = {Baker, Matthew},
  author = {Rumely, Robert},
  title = {Potential Theory and Dynamics on the Berkovich Projective Line},
  year = {2010},
  publisher = {American Mathematical Society, Providence, RI},
  series = {Mathematical Surveys and Monographs},
  volume = {159},
  pages = {428},
}
\bib{Ben02}{article}{
  author = {Benedetto, Robert L.},
  title = {Examples of wandering domains in p-adic polynomial dynamics},
  year = {2002},
  journal = {R. Math. Acad. Sci. Paris, Ser. I},
  series = {S/'{e}rie I},
  volume = {335},
  pages = {615-620}
}
\bib{Ber90}{book}{
  author = {Berkovich, Vladimir G.},
  title = {Spectral theory and analytic geometry over non-{A}rchimedean fields},
  year = {1990},
  publisher = {American Mathematical Society, Providence, RI},
  series = {Mathematical Surveys and Monographs},
  volume = {33},
  pages = {x+169},
}
\bib{Ber93}{article}{
  author = {Berkovich, Vladimir G.},
  journal = {Publications Math\'{e}matiques de l'IH\'{E}S},
  pages = {5-161},
  publisher = {Institut des Hautes \'{E}tudes Scientifiques},
  title = {\'{E}tale cohomology for non-Archimedean analytic spaces},
  url = {http://eudml.org/doc/104093},
  volume = {78},
  year = {1993},
}
\bib{Bez01}{article}{
  author={B\'{e}zivin, Jean-Paul},
  title={Sur les points p\'{e}riodiques des applications rationnelles en
analyse ultram\'{e}trique},
  journal={Acta Arith.},
  volume={100},
  year={2001},
  pages={63-74},
}
\bib{BGR84}{book}{
  author = {Bosch, Siegfried},
  author = {G\"{u}ntzer, Ulrich},
  author = {Remmert, Reinhold},
  title = {Non-Archimedean analysis},
  subtitle = { A systematic approach to rigid
analytic geometry},
  year = {1984},
  publisher = {Springer-Verlag, Berlin},
  series = {Grundlehren der Mathematischen Wissenschaften},
  volume = {261},
  pages = {436},
}
\bib{DFN15}{book}{
  editor = {Ducros, Antoine},
  editor = {Favre, Charles},
  editor = {Nicaise, Johannes},
  title = {Berkovich Spaces and Applications},
  year = {2015},
  publisher = {Springer International Publishing, Switzerland},
  series = {Lecture Notes in Mathematics},
  volume = {2119},
  pages = {xix+413},
}
\bib{FKT12}{article}{
  title={A non-Archimedean Montel’s theorem},
  volume={148},
  number={3},
  journal={Compositio Mathematica},
  publisher={London Mathematical Society},
  author={Favre, Charles},
  author={Kiwi, Jan},
  author= {Trucco, Eugenio},
  year={2012},
  pages={966–990}
}
\bib{Lee17}{article}{
	title={$J$-stability of expanding maps in non-Archimedean dynamics},
	DOI={10.1017/etds.2017.53},
	journal={Ergodic Theory and Dynamical Systems},
	publisher={Cambridge University Press},
	author={Lee, Junghun},
	pages={1–18},
	year={2017}
}
\bib{MSS83}{article}{
  author = {Mañé, Ricardo}, 
  author= {Sad, Paulo},
  author= {Sullivan, Dennis},
  journal = {Annales scientifiques de l'École Normale Supérieure},
  number = {2},
  pages = {193-217},
  publisher = {Elsevier},
  title = {On the dynamics of rational maps},
  url = {http://eudml.org/doc/82115},
  volume = {16},
  year = {1983},
}
\bib{McM94}{book}{
  title={Complex Dynamics and Renormalization},
  author={McMullen, Curtis T.},
  series={Annals of mathematics studies},
  year={1994},
  publisher={Princeton University Press},
  pages = {214}
}
\bib{RL03}{article}{
  author={Rivera-Letelier, Juan},
  title={Dynamique des fonctions rationnelles sur des corps locaux},
  journal={Ast\'{e}rique},
  year={2003},
  volume={287},
  pages={147-230},
}
\bib{Rob00}{book}{
  title={A Course in $p$-adic Analysis},
  author={Robert, Alain M.},
  series={Graduate Texts in Mathematics},
  volume={198},
  year={2000},
  publisher={Springer-Verlag, New York},
  pages = {xvi+438}
}
\bib{Ros02}{book}{
  title={Number Theory in Function Fields},
  author={Rosen, Michael},
  series={Graduate Texts in Mathematics},
  volume={210},
  year={2002},
  publisher={Springer-Verlag, New York},
  pages = {358}
}
\bib{Thu05}{thesis}{
  title = {{Potential theory on curves in non-Archimedean geometry. Applications to Arakelov theory.}},
  author = {Thuillier, Amaury},
  url = {https://tel.archives-ouvertes.fr/tel-00010990},
  school = {{Universit{\'e} Rennes 1}},
  year = {2005},
  type = {Ph.D. Thesis},
}
\end{biblist}
\end{bibdiv}

\Addresses

\end{document}